\title{Multigraded Commutative Algebra of Graph Decompositions}
\author{Alexander Engstr\"om}
\address{Department of Mathematics and Systems Analysis \\
Aalto University, Helsinki, Finland}
\email{alexander.engstrom@aalto.fi}
\author{Thomas Kahle}
\address{Zentrum Mathematik, TU M\"unchen \\ 85747 Garching b. M\"unchen, Germany}
\email{thomas.kahle@tum.de}
\author{Seth Sullivant}
\address{Department of Mathematics \\
North Carolina State University, Raleigh, NC, 27695}
\email{smsulli2@ncsu.edu}
\date{}
\definecolor{darkblue}{RGB}{0,0,160}
\theoremstyle{plain}
\newtheorem{thm}{Theorem}[section]
\newtheorem{lemma}[thm]{Lemma}
\newtheorem{prop}[thm]{Proposition}
\newtheorem{cor}[thm]{Corollary}
\newtheorem{conj}[thm]{Conjecture}
\theoremstyle{definition}
\newtheorem{defn}[thm]{Definition}
\newtheorem{ex}[thm]{Example}
\newtheorem{ques}[thm]{Question}
\theoremstyle{remark}
\newcommand{\zz}{\mathbb{Z}}
\newcommand{\nn}{\mathbb{N}}
\newcommand{\qq}{\mathbb{Q}}
\newcommand{\rr}{\mathbb{R}}
\newcommand{\kk}{\mathbb{K}}
\newcommand{\bfa}{\mathbf{a}}
\newcommand{\bfb}{\mathbf{b}}
\newcommand{\bfc}{\mathbf{c}}
\newcommand{\bfd}{\mathbf{d}}
\newcommand{\bfe}{\mathbf{e}}
\newcommand{\bff}{\mathbf{f}}
\newcommand{\bfg}{\mathbf{g}}
\newcommand{\bfh}{\mathbf{h}}
\newcommand{\bfr}{\mathbf{r}}
\newcommand{\bfs}{\mathbf{s}}
\newcommand{\bfu}{\mathbf{u}}
\newcommand{\bfv}{\mathbf{v}}
\newcommand{\bfw}{\mathbf{w}}
\newcommand{\cala}{\mathcal{A}}
\newcommand{\calb}{\mathcal{B}}
\newcommand{\calc}{\mathcal{C}}
\newcommand{\cald}{\mathcal{D}}
\newcommand{\calf}{\mathcal{F}}
\newcommand{\calg}{\mathcal{G}}
\newcommand{\calh}{\mathcal{H}}
\newcommand{\mm}{\mathcal{M}}
\newcommand{\spec}{\mathrm{Spec} \,}
\newcommand{\msp}{\times_{\nn\cala}}
\newcommand{\tfp}{\times_{\mathcal{A}}}
\newcommand{\tfpo}{\times_{\tilde{\mathcal{A}}}}
\newcommand{\tfps}{\times_{S}}
\newcommand{\<}{\left\langle}
\newcommand{\set}[1]{\left\lbrace #1 \right\rbrace} 
\renewcommand{\>}{\right\rangle}
\newcommand{\defas}{\mathrel{\mathop{:}}=}   
\DeclareMathOperator{\Lift}{Lift}
\DeclareMathOperator{\Quad}{Quad}
\DeclareMathOperator{\glue}{glue}
\DeclareMathOperator{\Glue}{Glue}
\DeclareMathOperator{\GGlue}{\mathbf{Glue}}
\DeclareMathOperator{\im}{im}
\DeclareMathOperator{\chara}{char}
\newcommand{\D}{\mathrm{D}} 
\newcommand\Perp{\protect\mathpalette{\protect\independenT}{\perp}}
\def\independenT#1#2{\mathrel{\rlap{$#1#2$}\mkern2mu{#1#2}}}
\newcommand{\ind}[2]{\left.#1 \Perp #2 \inD}
\newcommand{\inD}[1][\relax]{\def\argone{#1}\def\temprelax{\relax}
  \ifx\argone\temprelax\right.\else\,\middle|#1\right.{}\fi}
\begin{document}

\begin{abstract}
  The toric fiber product is a general procedure for gluing two ideals,
  homogeneous with respect to the same multigrading, to produce a new homogeneous
  ideal.  Toric fiber products generalize familiar constructions in commutative
  algebra like adding monomial ideals and the Segre product.  We describe how to
  obtain generating sets of toric fiber products in non-zero codimension and
  discuss persistence of normality and primary decompositions under toric fiber
  products.

  Several applications are discussed, including (a) the construction of Markov
  bases of hierarchical models in many new cases, (b) a new proof of the quartic
  generation of binary graph models associated to $K_{4}$-minor free graphs, and
  (c) the recursive computation of primary decompositions of conditional
  independence ideals.
\end{abstract}

\maketitle

\footnotesize

\setcounter{tocdepth}{1}

\normalsize

\section{Introduction}
Let $I$ and $J$ be ideals in polynomial rings $\kk[x]$ and $\kk[y]$, respectively,
that are both homogeneous with respect to a single grading by an affine
semigroup~$\nn\cala$.  The \emph{toric fiber product of $I$ and $J$}
(Definition~\ref{def:TFP}), denoted $I \times_\cala J,$ is a new ideal in a
usually larger polynomial ring~$\kk[z]$.  An important measure of complexity of
this operation is the \emph{codimension} of the product, defined as the rank of
the integer lattice $\ker \cala$.  In~\cite{Sullivant2007} the third author
introduced toric fiber products and proved that in the codimension zero case it is
possible to construct a generating set or Gr\"obner basis for $I \times_\cala J$
from generating sets or Gr\"obner bases of $I$ and~$J$.  In this case the algebra
and geometry is significantly simpler essentially because codimension zero toric
fiber products are \emph{multigraded Segre products}
(Definition~\ref{def:multisegre}), which share many nice properties with their
standard graded analogues.  Still in the codimension zero case, the geometry of
the toric fiber product can be understood quite explicitly in terms of
GIT~\cite{Mumford1994} (Propositions~\ref{prop:GIT} and~\ref{prop:msptfp}).  We
pursue this observation and show that (under mild assumptions on $\kk$) normality
persists (Theorem~\ref{thm:normal}).

The main goal of this paper, however, is to describe higher codimension toric
fiber products.  In Section~\ref{sec:pers-prim-decomp} we show that primary
decompositions persist in any codimension (Theorem~\ref{thm:primary-decomp}).  In
Section~\ref{sec:generators} we show how to construct generating sets of toric
fiber products in arbitrary codimension, but under some extra technical conditions
(Theorem~\ref{thm:cpp}). This generalizes the codimension one results on cut
ideals obtained by the first author in~\cite{Engstrom2008}.

The toric fiber product frequently appears in applications of combinatorial
commutative algebra, in particular in algebraic statistics~\cite{Engstrom2010,
  Sturmfels2008, Sturmfels2011}.  Typically in algebraic statistics, we are
interested in studying a family of ideals, where each ideal $I_{G}$ is associated
to a graph $G$ (or other combinatorial object, like a simplicial complex or a
poset).  If the graph has a decomposition into two simpler graphs $G_{1}$ and
$G_{2}$, we would like to show that the ideal $I_{G}$ has a decomposition into the
two ideals $I_{G_{1}}$ and~$I_{G_{2}}$.  If we can identify $I_{G}$ as a toric
fiber product $I_{G_{1}} \tfp I_{G_{2}}$, then difficult algebraic questions for
large graphs reduce to simpler problems on smaller graphs.  Our inspiration comes
from structural graph theory, where the imposition of forbidden substructures
often implies that a graph has a specific kind of structural decomposition into
simple pieces.  In Section~\ref{sec:markov} we pursue the analogy to the theory of
forbidden minors~\cite{Robertson2004} by exhibiting minor-closed classes of graphs
with certain degree bounds on their Markov bases.

Before proving our main theoretical results in
Sections~\ref{sec:tfpandmsp}--\ref{sec:generators}, we motivate our study with
several examples from algebraic statistics.  Sections~\ref{sec:markov}
and~\ref{sec:ci} contain new applications to the construction of Markov bases of
hierarchical models, and to the study of primary decompositions of conditional
independence ideals.


\subsection{Hierarchical models} \label{sec:2:hierarchical}

Hierarchical statistical models are used to analyze associations between
collections of random variables.  If the random variables are discrete, these
models are toric varieties, and hence their vanishing ideals are toric ideals.
Their binomial generators---known as \emph{Markov bases}---are useful for
performing various tests in statistics~\cite{Diaconis1998, Drton2009}.  From the
algebraic standpoint, they are binomial ideals with a specific combinatorial
parametrization in terms of a simplicial complex.

Let $\Gamma \subseteq 2^{V}$ be a simplicial complex on a finite set $V$ and $d
\in \zz^{V}_{\geq 2}$.  Let ${\rm facet}(\Gamma)$ be the set of maximal faces
of~$\Gamma$.  For an integer $n$, let $[n] = \{1,2, \ldots, n\}$.  For $F
\subseteq V$ let $d_{F} = (d_{v})_{v \in F}$ and let $\D_{F} = \prod_{v \in F}
[d_{v}]$.  For $i \in \D_{V}$ and $F \subseteq V$ let $i_{F} = (i_{v})_{v \in F}$
be the restriction.  For each $F \in {\rm facet}(\Gamma)$ and $i \in \D_{F}$, let
$a^{F}_{i}$ be an indeterminate.  For each $i \in \D_{V}$, let $p_{i}$ be another
indeterminate.  The toric ideal $I_{\Gamma, d}$ of the hierarchical model for
$(\Gamma,d)$ is the kernel of the $\kk$-algebra homomorphism
\[
\phi_{\Gamma,d} : \kk[p_{i}: i \in \D_{V}] \rightarrow \kk[ a^{F}_{j} : F \in {\rm
  facet}(\Gamma), j \in \D_{F} ] \qquad p_{i} \mapsto   \prod_{F \in {\rm facet}(\Gamma)}  a^{F}_{i_{F}}.
\]
A fundamental problem of algebraic statistics is to determine generators
for~$I_{\Gamma,d}$.  Results in this direction usually depend on special
properties of $\Gamma$ and~$d$.  An example is the following theorem of Kr\'al,
Norine, and Pangr\'ac \cite{Kral2008}, which is also a corollary to our results in
Section~\ref{sec:seriespar}:

\begin{thm}\label{thm:kral}
  Let $d_{i} = 2$ for all $i \in V$ and let $\Gamma$ be a graph with no $K_{4}$
  minors.  Then $I_{\Gamma,d}$ is generated by binomials of degrees two and four.
\end{thm}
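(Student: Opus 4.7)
The plan is to induct on the number of edges of $\Gamma$, leveraging the structural decomposition of $K_{4}$-minor-free graphs into smaller $K_{4}$-minor-free pieces glued along vertex separators of size at most two. Recall that any graph $\Gamma$ with no $K_{4}$ minor is either disconnected, has a cut vertex, has a $2$-vertex separator (by a classical result for $2$-connected series-parallel graphs with at least four vertices), or is one of a handful of very small building blocks (a forest on at most three vertices, or a triangle).

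The base cases to handle directly are forests and the triangle $C_{3}$. When $\Gamma$ is a forest, the model is decomposable and $I_{\Gamma,d}$ is the standard conditional independence ideal, generated by quadrics corresponding to edges of a clique tree. For $\Gamma = C_{3}$ one checks by a direct calculation that the Markov basis consists of quadrics together with a single quartic, so the degree bound holds at the base.

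For the inductive step, write $\Gamma = \Gamma_{1} \cup \Gamma_{2}$ with $S \defas V(\Gamma_{1}) \cap V(\Gamma_{2))$ satisfying $|S| \leq 2$, where both $\Gamma_{i}$ are proper $K_{4}$-minor-free subgraphs. By the inductive hypothesis, $I_{\Gamma_{1},d}$ and $I_{\Gamma_{2},d}$ are generated in degrees $2$ and $4$. The key algebraic identification is
\[
 I_{\Gamma,d} \;=\; I_{\Gamma_{1},d_{V_{1}}} \tfp I_{\Gamma_{2},d_{V_{2}}}
\]
with the multigrading $\cala$ given by the marginal on the separator $S$. When $|S| \leq 1$ this is a codimension-zero toric fiber product, and the result of~\cite{Sullivant2007} produces generators as lifts of the generators of the two factors together with quadratic Segre-type binomials; both types of generator have degree $\leq 4$. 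When $|S|=2$ we are in positive codimension and invoke Theorem~\ref{thm:cpp} from Section~\ref{sec:generators}: it supplies generators consisting of lifts of the factor generators (still degree $\leq 4$) together with additional $\Quad$ binomials coming from the kernel of the multigrading map on variables sharing the same $S$-marginal.

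The main obstacle, and the place where the $K_{4}$-minor-free assumption is genuinely used, is to verify the hypotheses of Theorem~\ref{thm:cpp} for the $|S|=2$ gluing and to bound the degrees of the resulting $\Quad$ binomials. Because $d_{v}=2$ everywhere and $|S|=2$, the local semigroup data attached to the edge separator is fixed and very rigid: one must check that its toric ideal is generated in degree at most four, a finite computation carried out in Section~\ref{sec:seriespar}. Once this local degree bound and the compatible-projection hypothesis of Theorem~\ref{thm:cpp} are confirmed, the induction closes and the degree bound propagates to all $K_{4}$-minor-free $\Gamma$.
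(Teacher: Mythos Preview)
Your overall strategy---induct on a size parameter, decompose along small separators, and use the toric fiber product machinery---matches the paper's. But there is a genuine gap: your inductive hypothesis is too weak to close the induction in the $|S|=2$, non-edge case.

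Concretely, when $S=\{u,v\}$ with $uv\notin E(\Gamma)$ the product is codimension one, and Theorem~\ref{thm:cpp} (via Theorem~\ref{thm:slowvarying}) outputs $\mathcal{H}\cup\Glue(\calf,\calg)$, where $\mathcal{H}$ is a Markov basis of the \emph{associated codimension zero product} $I_{\Gamma_1\cup uv}\tfpo I_{\Gamma_2\cup uv}$, not simply lifts of generators of $I_{\Gamma_i}$ plus $\Quad$. Two problems arise. First, you must verify the slow-varying (or compatible projection) hypothesis for $\calf$ and $\calg$; knowing only that generators have degree $\le 4$ does not suffice, since a degree-four binomial can satisfy $\gamma_1(\bff)=\pm 2\bfh$, where $\bfh$ is the degree-two generator of $\ker\cala$ for two isolated binary vertices. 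Proposition~\ref{prop:slowvaryingcheck} requires $\|\bff\|_1<2\|\bfh\|_1=8$, which a quartic need not satisfy. Second, even granting slow-varying, $\Glue$ can raise degree: gluing two compatible quartics that each project to $\pm\bfh$ yields a sextic (degree $4+4-2$), so ``lifts of the factor generators (still degree $\le 4$)'' is not what you get.

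The paper closes this gap by strengthening the inductive statement (Theorem~\ref{thm:series-parallel}): for each series-parallel piece with terminals $b,t$ it proves that the quartic generators can be chosen $\{b,t\}$-homogeneous (i.e.\ $\gamma_1(\bff)=0$), while the quadratic generators are slow-varying. Then only quadrics participate in $\Glue$, producing quadrics, and the quartics are absorbed into the associated codimension-zero product $I_{\Gamma_i\cup uv}$, which is again series-parallel with the same terminal structure. Your write-up needs this extra invariant; without it neither the applicability of Theorem~\ref{thm:cpp} nor the degree bound on $\Glue(\calf,\calg)$ is justified.
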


Combining our techniques with results from~\cite{Hillar2009}, we can also make
statements about the asymptotic behavior as the $d_{i}$ grow.  For instance, let
$F \subseteq V$ be an independent set of $\Gamma$ and consider $I_{\Gamma,d}$ as
$d_{i}$ tend to infinity for $i \in F$, while the remaining $d_{i}$ are fixed.  In
this case, there is a bound $M(\Gamma, d_{V \setminus F})$ for the degrees of
elements in minimal generating sets of
$I_{\Gamma,d}$.  
Our techniques allow us to determine the values of $M(\Gamma, d_{V \setminus F})$,
which were previously known only for reducible models or when $F$ is a
singleton~\cite{Hosten2007}.  Here is a simple example of how to apply
Theorem~\ref{thm:s6main}.

\begin{ex}\label{ex:hillar}
  Let $\Gamma = [12][13][24][34]$ be a four cycle, $F = \{1,4\}$, and $d_{\{2,3\}}
  = (2,2)$.  The toric ideal $I_{\Gamma,d}$ is a codimension one toric fiber
  product 
  and its minimal generating set 
  consists of the following four types of binomials, written in tableau notation
  (a common notation, explained below Theorem~\ref{thm:Diaconis}):
$$
\begin{bmatrix}
i_1 &  2 & 2  & l_1 \\
i_1 &  1 & 1  & l_2 \\
i_2 & 2 & 1  & l_3 \\
i_2 & 1 & 2 & l_4  
\end{bmatrix} -
\begin{bmatrix}
i_1 &  2 & 1  & l_3 \\
i_1 &  1 & 2  & l_4 \\
i_2 & 2 & 2  & l_1 \\
i_2 & 1 & 1 & l_2  
\end{bmatrix}
 \quad \quad \quad \quad
\begin{bmatrix}
i_1 &  2 & 2  & l_1 \\
i_2 &  1 & 1  & l_1 \\
i_3 & 2 & 1  & l_2 \\
i_4 & 1 & 2 & l_2  
\end{bmatrix} -
\begin{bmatrix}
i_3 &  2 & 1  & l_1 \\
i_4 &  1 & 2  & l_1 \\
i_1 & 2 & 2  & l_2 \\
i_2 & 1 & 1 & l_2  
\end{bmatrix}
$$
$$
\begin{bmatrix}
i_1 &  j & k  & l_1 \\
i_2 &  j & k  & l_2   
\end{bmatrix} -
\begin{bmatrix}
i_1 &  j & k  & l_2 \\
i_2 &  j & k  & l_1   
\end{bmatrix}
 \quad \quad \quad \quad
\begin{bmatrix}
i &  2 & 2  & l \\
i &  1 & 1  & l  
\end{bmatrix} -
\begin{bmatrix}
i &  2 & 1  & l \\
i &  1 & 2  & l 
\end{bmatrix}
$$
where $i,i_1,i_2,i_3,i_4 \in [d_{1}]$, $j, k \in [2]$, $l, l_1, l_2, l_3, l_4 \in
[d_{4}]$.  In particular, $M(\Gamma, d_{V \setminus F}) = 4$.
\end{ex}


\subsection{Conditional independence}\label{sec:introCI}

If $G$ is a graph on $V$, then its clique complex defines a hierarchical model as
in the previous section.  Probability distributions in this hierarchical model
satisfy certain conditional independence statements associated to the
graph~\cite{Lauritzen1996}.  One may ask which other distributions outside the
hierarchical model also satisfy the conditional independence constraints, and
algebraic statistics allows one to characterize these distributions.  Consider
again the polynomial ring $\kk[p_{i} : i \in \D_{V}]$ with one indeterminate for
each elementary probability.  If $A,B,C \subset V$ is a~partition of $V$,
i.e.~pairwise disjoint with $A\cup B\cup C = V$, the \emph{conditional
  independence (CI)-statement} $\ind{A}{B}[C]$ encodes that the random variables
in $A$ are independent of the random variables in $B$, given the values of the
random variables in $C$.  Distributions satisfying this constraint form a
hierarchical model, which arises from the largest simplicial complex on $V$ not
containing $\set{i,j}$ for any $i\in A, j\in B$.  Its toric ideal is
denoted~$I_{\ind{A}{B}[C]}$.  A \emph{conditional independence model} usually
contains several statements and one is led to consider intersections of toric
varieties.
Our main interest is in the \emph{global Markov ideal} of a graph $G$, which is
the sum of the toric ideals $I_{\ind{A}{B}[C]}$ for all $A,B,C$ forming a
partition of $V$ such that $C$ separates $A$ and $B$ in~$G$.  Our goal is to
determine primary decompositions and as always we want to employ the toric fiber
product machinery to split the problem into several easier problems.

\begin{ex}\label{ex:3squares}
  Let $G$ be the binary global Markov ideal of the graph in
  Figure~\ref{fig:3squares}.
  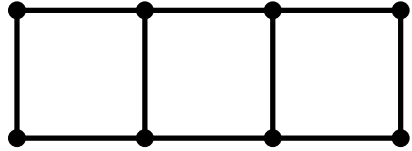
\begin{figure}[ht]
    \centering 
    \begin{tikzpicture}[scale=1.7, line width=.7mm]
      \foreach \i in {0,...,3} { \path (\i,0) coordinate (X\i); \path (\i,1)
        coordinate (Y\i); \fill (X\i) circle (2pt); \fill (Y\i) circle (2pt);
        \draw (X\i) -- (Y\i); } \draw (X0) -- (X3); \draw (Y0) -- (Y3);
    \end{tikzpicture}
    \caption{\label{fig:3squares} Three squares glued along edges}
  \end{figure}
  Since it decomposes as three squares glued along edges,
  Theorem~\ref{thm:primary-decomp} and Corollary~\ref{thm:irredundant} reconstruct
  the primary decomposition from that of the CI-ideal of a square.  Our results
  also show that the corresponding CI-ideal is radical, as it is composed of
  graphs with radical CI-ideals.  In total it is the intersection of $729=9^{3}$
  prime ideals.
\end{ex}

A systematic check of all graphs with at most five vertices and with $d_{v} = 2$
for all $v \in V$ found no examples of a non-radical global Markov ideal.  This
limited computational evidence motivates the following question:
\begin{ques}
  Are global Markov ideals always radical?
\end{ques}

The answer to this question is negative.  More than a year after first submission
of the present paper, Kahle, Rauh, and Sullivant showed that the global Markov
ideal of $K_{3,3}$ is not radical~\cite{KRS12}.


\section{Toric fiber products and multigraded Segre products}\label{sec:tfpandmsp}

Let $r > 0$ be a positive integer and $s,t \in \zz^r_{> 0}$ be two vectors of
positive integers.  Let
$$
\kk[x] = \kk[x^i_j : i \in [r], j \in [s_i]] \quad \quad \mbox{ and } \quad \quad
\kk[y] = \kk[y^i_k : i \in [r], k \in [t_i]]
$$
be multigraded polynomial rings subject to a multigrading
$$
\deg(x^i_j) = \deg(y^i_k) = \bfa^i \in \zz^d.
$$
We assume throughout that there exists a vector $\omega \in \qq^d$ such that
$\omega^T\bfa^i = 1$ for all~$i$.  This implies that ideals homogeneous with
respect to the multigrading are homogeneous with respect to the standard coarse
grading.  Let $\cala = \{ \bfa^1, \ldots, \bfa^r \}$ and let $\nn\cala$ be the
affine semigroup generated by~$\cala$.  If $I \subset \kk[x]$ and $J
\subset\kk[y]$ are $\nn\cala$-graded ideals, the quotient rings $R = \kk[x]/I$ and
$S = \kk[y]/J$ are also $\nn\cala$-graded.  Let
$$
\kk[z] = \kk[z^i_{jk} : i \in [r], j \in [s_i], k \in [t_i]]
$$
and let $\phi_{I,J} : \kk[z] \rightarrow R \otimes_\kk S$ be the $\kk$-algebra
homomorphism such that $z^i_{jk} \mapsto x^i_j \otimes y^i_k.$

\begin{defn}\label{def:TFP}
  The \emph{toric fiber product $I\tfp J$} of $I$ and $J$ is the kernel of
  $\phi_{I,J}$:
$$I\tfp J = \ker \phi_{I,J}.$$
The \emph{codimension} of the toric fiber product is the dimension of the space of
linear relations among~$\cala$.
\end{defn}

We can also define the $\kk$-algebra homomorphism $\phi : \kk[z] \rightarrow
\kk[x] \otimes_\kk \kk[y] = \kk[x,y]$ by $z^i_{jk} \mapsto x^i_j y^i_k$.  Then the
toric fiber product is the ideal $I\tfp J = \phi^{-1}(I + J)$.

\subsection{The geometry of toric fiber products}
If $I \times_\cala J$ is a codimension zero toric fiber product, the relation
between the schemes $\spec( \kk[x]/I)$, $\spec(\kk[y]/J)$ and $\spec( \kk[z]/(I
\times_\cala J))$ can be explained in the language of GIT (geometric invariant
theory) quotients.  Since $I$ and $J$ are homogeneous with respect to the grading
by $\cala$, both $\spec( \kk[x]/I)$ and $\spec(\kk[y]/J)$ have an action of a
$(\dim \cala - 1)$-dimensional torus~$T$.  Thus the product scheme $\spec(
\kk[x]/I) \times \spec(\kk[y]/J)$ possesses an action of $T$ via $t \cdot (x,y) =
(tx, t^{-1} y)$.

\begin{prop}\label{prop:GIT}
  If $\kk$ is algebraically closed and $\cala$ is linearly independent, then
  \begin{equation*}
    \spec(\kk[z]/(I \times_\cala J))  \cong \left( \spec( \kk[x]/I) \times \spec(\kk[y]/J) \right)//T.  
  \end{equation*}
\end{prop}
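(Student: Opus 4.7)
The plan is to identify $\spec(\kk[z]/(I \tfp J))$ with the categorical/GIT quotient by realizing $\kk[z]/(I\tfp J)$ as the ring of $T$-invariants in $R \otimes_\kk S$, where $R = \kk[x]/I$ and $S = \kk[y]/J$. Recall that for an affine scheme $W = \spec A$ with action by a linearly reductive group $G$ (which a torus always is), the GIT quotient is by definition $W //G = \spec(A^G)$. So the proposition reduces to exhibiting an isomorphism of $\kk$-algebras
\[
\kk[z]/(I \tfp J) \;\cong\; (R \otimes_\kk S)^T,
\]
where $T$ acts on $R \otimes S$ via $t \cdot (r \otimes s) = (tr) \otimes (t^{-1}s)$.

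First I would compute the invariants explicitly. Since $\cala$ is linearly independent, the $\nn\cala$-grading on $\kk[x]$ and $\kk[y]$ refines to an $\nn^r$-grading in which $\deg(x^i_j) = \deg(y^i_k) = e_i$; a polynomial is $\nn\cala$-homogeneous of degree $\bfm = \sum_i m_i \bfa^i$ if and only if it is $\nn^r$-homogeneous of degree $(m_1,\dots,m_r)$. The eigenspace decomposition of the $T$-action thus gives
\[
(R \otimes_\kk S)^T \;=\; \bigoplus_{\bfm \in \nn\cala} R_\bfm \otimes_\kk S_\bfm,
\]
with the sum over $\nn^r$ under the linear independence hypothesis. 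Next I would check that $\phi_{I,J}: \kk[z] \to R \otimes S$ has image exactly this invariant subalgebra. Clearly every $\phi_{I,J}(z^i_{jk}) = x^i_j \otimes y^i_k$ lies in $R_{\bfa^i} \otimes S_{\bfa^i}$. For surjectivity onto $R_\bfm \otimes S_\bfm$, it suffices to express each tensor $(x^\alpha + I) \otimes (y^\beta + J)$ with $\deg x^\alpha = \deg y^\beta = \bfm$ as a product of generators $x^i_j \otimes y^i_k$. By linear independence of $\cala$, equality of degrees forces $\sum_j \alpha^i_j = \sum_k \beta^i_k = m_i$ for every $i$; fixing, for each $i$, any bijection between the multiset of $j$-indices and the multiset of $k$-indices yields the required factorization into generators. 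Combined with $\ker \phi_{I,J} = I \tfp J$ (the definition) this gives the algebra isomorphism displayed above, and Spec of both sides yields the claimed scheme isomorphism.

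The main obstacle is the surjectivity of $\phi_{I,J}$ onto the invariants, and it is precisely here that the hypothesis on $\cala$ is indispensable: if $\cala$ had linear relations, then monomials of matching $\nn\cala$-degree could fail to agree in the finer $\nn^r$ multidegree, producing invariants in $R \otimes S$ that are not in the image of $\phi_{I,J}$ (this is morally the reason the codimension zero case is the ``easy'' one). A minor subtlety to mention is that we are working with $(R\otimes S)^T$ at the level of algebras (and not worrying about closed orbits and GIT stability conditions), which is legitimate because for affine $\spec$ of a finitely generated algebra under a reductive group action the GIT quotient in Mumford's sense \emph{is} $\spec$ of the invariants; algebraic closedness of $\kk$ then ensures the scheme-level statement identifies closed points correctly.
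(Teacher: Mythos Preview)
Your proposal is correct and follows essentially the same route as the paper: both arguments identify the GIT quotient with $\spec$ of the $T$-invariants, compute those invariants as the diagonal $\bigoplus_{\bfa} R_\bfa \otimes_\kk S_\bfa$, and then use linear independence of $\cala$ to prove that $\phi_{I,J}$ surjects onto this subalgebra by matching the $\nn^r$-multidegrees of $x^\alpha$ and $y^\beta$ and pairing off factors. The only cosmetic difference is that the paper invokes algebraic closedness up front to write the product scheme as $\spec(R\otimes_\kk S)$, whereas you fold it into a remark at the end.
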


\begin{proof}
  If $\kk$ is algebraically closed, then
  \begin{equation*}
    \spec( \kk[x]/I) \times \spec(\kk[y]/J)
    = \spec( \kk[x]/I \otimes_\kk \kk[y]/J).
  \end{equation*}
  Let $R = \kk[x]/I$ and $S = \kk[x]/J$.  Both $R$ and $S$ are $\nn\cala$-graded,
  so we can write $R = \oplus_{\bfa \in \nn\cala} R_\bfa$, $S = \oplus_{\bfa \in
    \nn\cala} S_\bfa$, and
  \begin{equation*}
    \kk[x]/I \otimes_\kk \kk[y]/J  =  \oplus_{\bfa \in \nn\cala, \bfb \in \nn\cala} R_\bfa \otimes_\kk S_\bfb,
  \end{equation*}
  where the $\zz \cala$ degree of $R_\bfa \otimes_\kk S_\bfb$ is $\bfa - \bfb$.
  The invariant ring of the torus action is the degree ${\bf 0}$ part, which is
  $\oplus_{\bfa \in \nn\cala} R_\bfa \otimes_\kk S_\bfa$.  The proof is complete
  once we show that
  \begin{equation} \label{eq:tfpmsp}
    \kk[z]/(I \times_\cala J)  \cong \oplus_{\bfa \in \nn\cala}  R_\bfa \otimes_\kk S_\bfa,
  \end{equation}
  since then the spectra must be the same.
  The toric fiber product $I \times_\cala J$ is the kernel of the ring
  homomorphism
  \begin{equation*}
    \phi:  \kk[z] \rightarrow R \otimes_\kk S, \quad \quad z^i_{jk} \rightarrow x^i_{j} \otimes y^i_k,
  \end{equation*}
  thus the first isomorphism theorem asserts $\kk[z]/(I \times_\cala J) \cong \im
  \phi$.  Since $\deg(x^i_j) = \deg (y^i_k) = \bfa_{i}$, $\im \phi$ is a
  subalgebra of $\oplus_{\bfa \in \nn\cala} R_\bfa \otimes_\kk S_\bfa$.  We need
  to show that $\phi$ surjects onto it.
  As algebras, $R$ is generated by $\oplus_{\bfa \in \cala} R_\bfa$ and $S$ is
  generated by $\oplus_{\bfa \in \cala} S_\bfa$.  Now let $x^\bfu \otimes y^\bfv$ be a
  monomial in some $R_\bfa \otimes_\kk S_\bfa$.  Since $\cala = \{\bfa_1, \ldots,
  \bfa_n\}$ is linearly independent, there is a unique way to write $\bfa =
  \sum_{i =1}^n \lambda_i \bfa_i$ with $\lambda_{i} \in \nn$.  Thus
  \begin{equation*}
    x^\bfu  =  \prod_{r =1}^{\lambda_1} x^1_{j_{1r}}  \cdots \prod_{r =1}^{\lambda_n}
    x^n_{j_{nr}} \quad \text{ and } \quad y^\bfv = \prod_{r =1}^{\lambda_1} y^1_{k_{1r}}
    \cdots \prod_{r =1}^{\lambda_n} y^n_{k_{nr}}.
  \end{equation*}
  So we have
  \begin{equation*}
    x^\bfu \otimes y^\bfv  =  
    \prod_{r =1}^{\lambda_1} x^1_{j_{1r}} \otimes y^1_{k_{1r}}  \cdots \prod_{r =1}^{\lambda_n} x^n_{j_{nr}} \otimes y^n_{k_{nr}}
  \end{equation*}
  and this monomial is in the subring generated by $\oplus_{\bfa \in \cala} R_\bfa
  \otimes_\kk S_\bfa$.  Since the monomials span the entire ring $\oplus_{\bfa \in
    \nn\cala} R_\bfa \otimes_\kk S_\bfa$ as a vector space, every element in
  $\oplus_{\bfa \in \nn\cala} R_\bfa \otimes_\kk S_\bfa$ is in $\im \phi \cong
  \kk[z]/(I \times_\cala J)$, which completes the proof.
\end{proof}

The assumption of linear independence is essential for the proof of
Proposition~\ref{prop:GIT} and the statement is no longer true if $\cala$ is
linearly dependent.  We always have
\[
\left( \spec(\kk[x]/I) \times \spec(\kk[y]/J) \right) // T = \spec(
\bigoplus_{\bfa \in \nn\cala} R_\bfa \otimes_\kk S_\bfa )
\]
but \eqref{eq:tfpmsp} fails.  Indeed, $\kk[z]/(I \times_\cala J)$ is a strict
subset of $\oplus_{\bfa \in \nn\cala} R_\bfa \otimes_\kk S_\bfa$ when $\cala$ is
linearly dependent.  While not, in general, a toric fiber product, this ring and
the associated GIT quotient do arise in algebraic geometry, in particular in the
work of Buczynska~\cite{Buczynska2010} and Manon~\cite{Manon2009}.  Because of its
appearance in other contexts, we feel that this object is worthy of its own
definition.

\begin{defn} \label{def:multisegre}
  Let $R$ and $S$ be two rings graded by a common semigroup~$\nn\cala$.  The
  \emph{multigraded Segre product} is 
  \begin{equation*}
    R \times_{\nn\cala} S  =  \oplus_{\bfa \in \nn\cala} R_\bfa \otimes_\kk S_\bfa.
  \end{equation*}
\end{defn}

With this new definition, Proposition~\ref{prop:GIT} is equivalent to the
statement:
\begin{prop} \label{prop:msptfp} If $\cala$ is linearly independent, then 
  \begin{equation*}
    \kk[x]/I \times_{\nn\cala} \kk[y]/J  \cong  \kk[z]/(I \times_\cala J).
\end{equation*}
\end{prop}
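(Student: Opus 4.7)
The plan is essentially to extract the ring-theoretic content from the proof of Proposition~\ref{prop:GIT}: the key intermediate isomorphism~\eqref{eq:tfpmsp} established there is exactly the statement of Proposition~\ref{prop:msptfp} once one adopts the notation of Definition~\ref{def:multisegre}. So my proof would repackage that argument as a direct statement about rings, without invoking the language of GIT or schemes.

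First, I would set up the map $\phi_{I,J}: \kk[z] \to R \otimes_\kk S$ from Definition~\ref{def:TFP}, whose kernel is $I \times_\cala J$. Since $\deg(x^i_j) = \deg(y^i_k) = \bfa^i$, the image of each generator $z^i_{jk}$ lies in the bidegree-$(\bfa^i, \bfa^i)$ piece of $R \otimes_\kk S$. Thus $\im \phi_{I,J}$ is contained in the multigraded Segre product $R \times_{\nn\cala} S = \bigoplus_{\bfa \in \nn\cala} R_\bfa \otimes_\kk S_\bfa$, and the first isomorphism theorem gives an injection $\kk[z]/(I \times_\cala J) \hookrightarrow R \times_{\nn\cala} S$.

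The crux is surjectivity, and this is precisely where linear independence of $\cala$ is invoked. For each $\bfa \in \nn\cala$, the space $R_\bfa \otimes_\kk S_\bfa$ is spanned by pure tensors $x^\bfu \otimes y^\bfv$ with $\deg x^\bfu = \deg y^\bfv = \bfa$. Linear independence of $\cala = \{\bfa^1, \ldots, \bfa^r\}$ guarantees that the expression $\bfa = \sum_{i=1}^r \lambda_i \bfa^i$ with $\lambda_i \in \nn$ is unique, which forces the number of $x$-variables (and $y$-variables) of degree $\bfa^i$ appearing in $x^\bfu$ (resp.~$y^\bfv$) to equal $\lambda_i$. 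Grouping the factors accordingly and pairing each $x^i_{j_{ir}}$ with a $y^i_{k_{ir}}$, one writes
\[
x^\bfu \otimes y^\bfv \;=\; \prod_{i=1}^r \prod_{r=1}^{\lambda_i} \bigl( x^i_{j_{ir}} \otimes y^i_{k_{ir}} \bigr) \;=\; \phi_{I,J}\!\left( \prod_{i=1}^r \prod_{r=1}^{\lambda_i} z^i_{j_{ir} k_{ir}} \right),
\]
exhibiting the pure tensor as an element of $\im \phi_{I,J}$.

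The one delicate point to check is that the grouping step actually produces variables with matching indices on both sides; here the degree condition $\deg x^\bfu = \deg y^\bfv = \bfa$ combined with uniqueness of $\lambda$ is what does the work. I would expect this to be the only real obstacle, and the paragraph following Proposition~\ref{prop:GIT} in the excerpt already highlights that surjectivity fails without linear independence, confirming that the argument is tight.
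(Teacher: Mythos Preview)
Your proposal is correct and is essentially identical to the paper's own argument: the paper does not give a separate proof of Proposition~\ref{prop:msptfp} but simply observes that it restates the isomorphism~\eqref{eq:tfpmsp} established within the proof of Proposition~\ref{prop:GIT}, and your write-up reproduces that surjectivity argument via the unique $\nn$-expansion $\bfa = \sum_i \lambda_i \bfa^i$ almost verbatim.
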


\subsection{Persistence of normality} \label{sec:normal} One of the most basic
questions about an ideal $I$ in a ring $R$ is whether or not the quotient $R/I$ is
normal.  When $I$ is a toric ideal, $\kk[x]/I$ is an affine semigroup ring and
normality can be characterized in terms of the semigroup having no holes.  In
algebraic statistics, normality implies favorable properties of sampling
algorithms for contingency tables~\cite{Chen2006, Takemura2008}.  In this section
we show that normality persists under codimension zero toric fiber products.  We
only treat the case of (not necessarily toric) prime ideals, which suffices in
many situations (see for instance~\cite[Proposition~2.1.16]{SwaHu}).

\begin{thm} \label{thm:normal} Let $I$ and $J$ be homogeneous prime
  $\nn\cala$-graded ideals, with $\cala$ linearly independent, and suppose that
  $\kk[x]/I$ and $\kk[y]/J$ are normal domains (that is, integrally closed in
  their field of fractions).  If $\kk$ is algebraically closed, then $\kk[z]/ (I
  \tfp J)$ is normal.
\end{thm}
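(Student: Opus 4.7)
The plan is to reduce the claim to two classical facts: the tensor product of two normal $\kk$-domains over an algebraically closed field is a normal domain, and the ring of torus invariants in a normal domain is normal. Set $R = \kk[x]/I$ and $S = \kk[y]/J$. By Proposition~\ref{prop:msptfp}, the linear independence of $\cala$ gives
\[
\kk[z]/(I \tfp J) \;\cong\; R \times_{\nn\cala} S \;=\; \bigoplus_{\bfa \in \nn\cala} R_\bfa \otimes_\kk S_\bfa,
\]
and as in the proof of Proposition~\ref{prop:GIT} the right-hand side is precisely the invariant subring $(R \otimes_\kk S)^T$ for the torus action of weight $\bfa$ on $R_\bfa$ and $-\bfa$ on $S_\bfa$. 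It therefore suffices to show that this invariant subring is a normal domain.

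First I would check that $R \otimes_\kk S$ is itself a normal domain. This is the step where the assumption that $\kk$ is algebraically closed is genuinely used: a finitely generated normal $\kk$-algebra that is a domain over an algebraically closed field is both geometrically integral and geometrically normal, and the tensor product over $\kk$ of two such algebras inherits both properties (a standard consequence of the material in EGA IV). As a byproduct, $\phi_{I,J} : \kk[z] \to R \otimes_\kk S$ lands in a domain, so $I \tfp J$ is prime and $\kk[z]/(I \tfp J)$ is a domain.

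Next I would show that the torus invariants $(R \otimes_\kk S)^T$ form a normal domain by a direct argument: if $\alpha \in \mathrm{Frac}\bigl((R \otimes_\kk S)^T\bigr)$ is integral over $(R \otimes_\kk S)^T$, then inside $\mathrm{Frac}(R \otimes_\kk S)$ it is integral over the normal domain $R \otimes_\kk S$ and hence lies there; moreover ratios of $T$-invariants are $T$-invariant, so $\alpha \in (R \otimes_\kk S)^T$. The main technical obstacle is the first step, as this is the only place where algebraic closure is used in a nontrivial way and where one must invoke geometric normality rather than merely normality. Once $R \otimes_\kk S$ is established to be a normal domain, the invariant-ring step together with the identification of $\kk[z]/(I \tfp J)$ as $(R \otimes_\kk S)^T$ completes the proof.
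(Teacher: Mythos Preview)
Your proof is correct and follows the same overall strategy as the paper: identify $\kk[z]/(I\tfp J)$ with the multigraded Segre product inside $R\otimes_\kk S$, establish that $R\otimes_\kk S$ is a normal domain (using that $\kk$ is algebraically closed), and then descend normality to the subring. The only difference is in this last descent step. The paper proves a separate Lemma~\ref{lemma:directsummand} showing that the multigraded Segre product is a direct summand of $R\otimes_\kk S$ as a module over itself, and then invokes the general fact that a direct summand of a normal domain is normal. You instead use the GIT description $(R\otimes_\kk S)^T$ and argue directly that an element of the fraction field of the invariant ring which is integral over it already lies in $R\otimes_\kk S$ by normality, and is $T$-invariant because it is a ratio of invariants. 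Your route is slightly more self-contained for this particular theorem; the paper's route isolates the direct-summand observation because it is expected to be useful for transferring other properties (e.g.\ regularity bounds) from $R\otimes_\kk S$ to the Segre product, not just normality.
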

The assumption that $\kk$ is algebraically closed is needed to ensure that
$\kk[z]/ (I \tfp J)$ is a domain.  This holds more generally if $I$ and $J$ are
geometrically prime (see Theorem~\ref{thm:primary-decomp}).  If this is given, the
field assumption can be weakened to $\kk$ being a \emph{perfect field}, that is a
field $\kk$ such that either $\chara (\kk) = 0$ or $\chara(\kk) = p$ and $\kk =
\set{a^{p} : a\in \kk}$.  The proof of Theorem~\ref{thm:normal} is based on the
following observation which is easy and independent of the codimension of~$\cala$.
\begin{lemma}\label{lemma:directsummand}
  The multigraded Segre product 
  is a direct summand of the tensor product $R \otimes_{\kk} S$ (as a module over
  the subring).
\end{lemma}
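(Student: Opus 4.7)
The plan is to exhibit an explicit $T$-module complement to $T \defas R \times_{\nn\cala} S$ inside $R \otimes_{\kk} S$, using the natural $\nn\cala \times \nn\cala$ bigrading. Since $R$ and $S$ are both $\nn\cala$-graded, the tensor product decomposes as
\[
R \otimes_{\kk} S \;=\; \bigoplus_{\bfa, \bfb \in \nn\cala} R_\bfa \otimes_\kk S_\bfb,
\]
and by definition $T$ is the ``diagonal'' piece $\bigoplus_{\bfa \in \nn\cala} R_\bfa \otimes_\kk S_\bfa$. Set
\[
M \;\defas\; \bigoplus_{\substack{\bfa, \bfb \in \nn\cala \\ \bfa \neq \bfb}} R_\bfa \otimes_\kk S_\bfb,
\]
so that $R \otimes_\kk S = T \oplus M$ as $\kk$-vector spaces.

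The only thing to check is that $M$ is stable under multiplication by $T$. A homogeneous element $r_\bfc \otimes s_\bfc \in T$ has bidegree $(\bfc, \bfc)$, and multiplying it by an element of $R_\bfa \otimes S_\bfb \subseteq M$ lands in $R_{\bfa+\bfc} \otimes S_{\bfb+\bfc}$. Because $\nn\cala$ embeds in the abelian group $\zz\cala$, cancellation holds: $\bfa \neq \bfb$ implies $\bfa + \bfc \neq \bfb + \bfc$, so the product remains off-diagonal, i.e.\ in $M$. Thus $M$ is a $T$-submodule of $R \otimes_\kk S$, and $R \otimes_\kk S = T \oplus M$ is a decomposition of $T$-modules, which is exactly the claim.

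There is essentially no obstacle: the whole argument rests on the cancellation property of the ambient group $\zz\cala$, which is automatic. The lemma is genuinely codimension-insensitive, consistent with the remark preceding its statement, and it is this direct-summand property that will be invoked in the proof of Theorem~\ref{thm:normal} to transfer normality from $R \otimes_\kk S$ (and ultimately from $R$ and $S$) down to $T \cong \kk[z]/(I \tfp J)$ in the linearly independent case.
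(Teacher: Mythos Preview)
Your proof is correct and takes essentially the same approach as the paper: both use the $\nn\cala \times \nn\cala$ bigrading on $R \otimes_\kk S$ to split off the diagonal piece, the paper by writing down the projection map and you by exhibiting the off-diagonal complement $M$ and checking it is $T$-stable. Your explicit invocation of cancellation in $\zz\cala$ is exactly what justifies that the paper's projection is a $T$-module map, so the two arguments are equivalent.
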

\begin{proof}
  The inclusion $0 \to \bigoplus_{a\in\nn\cala} R_{a} \otimes_{\kk} S_{a} \to
  \bigoplus_{a\in\nn\cala}\bigoplus_{b\in\nn\cala} R_{a} \otimes_{\kk} S_{b}$
  splits via the $(\bigoplus_{a\in\nn\cala} R_{a} \otimes_{\kk} S_{a})$-module
  homomorphism that maps $x^{i}_{j}\otimes y^{l}_{k}$ to itself if $\bfa_{i} =
  \bfa_{l}$ and zero otherwise.
\end{proof}
We anticipate that Lemma~\ref{lemma:directsummand} will be useful in relating
properties of multigraded Segre products to those of the factors.  For instance, a
careful analysis of the Castelnuovo--Mumford regularity would be interesting, but
is beyond the scope of this paper.  We apply the lemma to prove persistence of
normality in codimension zero.  Note that the codimension requirement enters
because only if $\cala$ is linearly independent, Lemma~\ref{lemma:directsummand}
gives us a handle on the toric fiber product.
\begin{proof}[Proof of Theorem~\ref{thm:normal}] Let $R = \kk[x]/I$ and
  $S=\kk[y]/J$.  It is easy to see directly (and also follows from
  Theorem~\ref{thm:primary-decomp} below) that $\kk[z]/(I \tfp J)$ is a domain,
  given that $\kk$ is algebraically closed.  An algebraically closed field is
  perfect and therefore, if $R$ and $S$ are normal, then $R\otimes_{\kk}S$ is
  normal.  This follows from Serre's criterion and~\cite[Theorem~6]{Tousi2003}.
  Since a direct summand of a normal domain is normal,
  Lemma~\ref{lemma:directsummand} completes the proof.
\end{proof}

The main case of interest for our applications is when the ideals $I$ and $J$ are
toric ideals and various special cases have been proved in the algebraic
statistics literature.  For example, Ohsugi~\cite{Ohsugi2010} proves this for cut
ideals, Sullivant~\cite{Sullivant2010} for hierarchical models, and Micha\l ek
\cite{Michalek2010} for group-based phylogenetic models.  The proofs of these
results are essentially the same, and consists of analyzing a toric fiber product
of the grading semigroup.  We introduce this setting now.

\subsection{Fiber products of vector configurations}\label{sec:vector}

If $I$ and $J$ are toric ideals, then $I \tfp J$ is also a toric ideal.  The
corresponding vector configuration arises from taking the fiber product of the two
vector configurations corresponding to $I$ and~$J$.  Let $\calb = \{ \bfb^i_j : i
\in [r], j \in [s_i] \} \subseteq \zz^{d_1}$ and $\calc = \{ \bfc^i_k : i \in [r],
k \in [t_i] \} \subseteq \zz^{d_2}$ be two vector configurations.  As necessary,
we consider $\calb$ and $\calc$ as collections of vectors or as matrices.  These
vector configurations define toric ideals $I_\calb \subseteq \kk[x]$ and $I_\calc
\subseteq \kk[y]$ by
\begin{equation*}
I_\calb = \<  x^\bfu - x^\bfv :  \calb \bfu = \calb \bfv \>  \qquad \text{ and }
\qquad I_\calc = \< y^\bfu - y^\bfv :  \calc \bfu = \calc \bfv \>.
\end{equation*}
To say that $I_\calb$ and $I_\calc$ are homogeneous with respect to the grading by
$\cala$ with $\deg(x^i_j) = \deg(y^i_k) = \bfa^i$ is to say that there are linear
maps $\pi_1 : \zz^{d_1} \rightarrow \zz^e$ and $\pi_2: \zz^{d_2} \rightarrow
\zz^e$ such that $\pi_1(\bfb^i_j) = \bfa^i$ for all $i$ and $j$ and
$\pi_2(\bfc^i_k) = \bfa^i$ for all $i$ and~$k$.  The new vector configuration that
arises in this case is the fiber product of the vector configurations.
\begin{equation*}
\calb \tfp \calc =  \{ (\bfb^i_j, \bfc^i_k) \in \zz^{d_1 + d_2} :  i \in [r], j \in [s_i], k \in [t_i] \}.
\end{equation*}
The notation is set up so that the toric fiber product $I_\calb \tfp I_\calc$ is
the toric ideal
\[
I_\calb \tfp I_\calc = I_{\calb \tfp \calc}  =  \< z^\bfu - z^\bfv :  (\calb \tfp
\calc)\bfu = (\calb \tfp \calc) \bfv \>.
\]
Indeed, if $\kk[s]$ and $\kk[t]$ are polynomial rings, and
\begin{gather*}
\phi:  \kk[x] \to \kk[s] \quad x^i_j  \mapsto f^i_j(s)\\
\psi:  \kk[y] \to  \kk[t] \quad y^i_k  \mapsto g^i_k(t)
\end{gather*}
are $\kk$-algebra homomorphisms, then we can form the toric fiber product
homomorphism
\begin{equation*}
  \phi \times_\cala \psi :  \kk[z]  \to  \kk[s,t] \quad z^i_{j,k}  \mapsto  f^i_j(s) g^i_j(t).
\end{equation*}
If $I = \ker \phi, J = \ker \psi$ and both ideals are homogeneous with respect to
the grading by $\cala$, then $I \times_\cala J = \ker (\phi \times_\cala \psi)$.  In
the toric case, when $\phi,\psi$ are monomial homomorphisms, it is easy to see
that $\calb \tfp \calc$ defines the toric fiber product homomorphism.

In most cases our interest is in the ideal $I_\calb \tfp I_\calc = I_{\calb \tfp
  \calc}$ and not the specific vector configuration.  A useful technique is to
modify the vector configuration $\calb \tfp \calc$ to any other set of vectors
with the same kernel, without changing the toric ideal.  For example, we could
also use the vector configuration
\begin{equation*}
\calb \tfp \calc =  \{ (\bfb^i_j,\bfa^i,  \bfc^i_k) \in \zz^{d_1 + e + d_2} :  i \in [r], j \in [s_i], k \in [t_i] \}.
\end{equation*}


\section{Persistence of primary decomposition}
\label{sec:pers-prim-decomp}
Primary decompositions of toric fiber products consist of toric fiber products of
primary components.
To state the result, recall that an ideal is \emph{geometrically primary} if it is
primary over any algebraic extension of the coefficient field.

\begin{thm}
  \label{thm:primary-decomp}
  Let $I \subseteq \kk[x]$ and $J \subseteq \kk[y]$ be $\cala$-homogeneous ideals.
  Let $I = I_1 \cap \cdots \cap I_k$ and $J = J_1 \cap \cdots \cap J_l$ be primary
  decompositions of $I$ and $J$ such that all ideals $I_{i}$ and $J_{j}$ are
  homogeneous with respect to~$\cala$.  Then
\begin{equation} \label{eq:primary}
I \times_\cala J  =  \cap_{i = 1}^k \cap_{j = 1}^l  I_i \times_\cala J_j.
\end{equation}
If, in addition, the ideals $I_i$ and $J_j$ are all geometrically primary, then
(\ref{eq:primary}) is a primary decomposition of $I \times_\cala J$.
\end{thm}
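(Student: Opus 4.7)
The plan is to reduce everything to properties of the sum ideal $I + J$ inside $\kk[x, y] = \kk[x] \otimes_\kk \kk[y]$, exploiting the formula $I \tfp J = \phi^{-1}(I + J)$ for the fixed morphism $\phi : \kk[z] \to \kk[x, y]$, $z^i_{jk} \mapsto x^i_j y^i_k$. The same $\phi$ also gives $I_i \tfp J_j = \phi^{-1}(I_i + J_j)$, so the two assertions of the theorem split cleanly into an ``ideal intersection'' part and a ``primariness'' part.

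For the intersection (\ref{eq:primary}), since $\phi^{-1}$ commutes with intersections, it suffices to show $I + J = \bigcap_{i,j}(I_i + J_j)$ in $\kk[x, y]$. The inclusion $\subseteq$ is immediate from $I \subseteq I_i$ and $J \subseteq J_j$. For the reverse, the decompositions yield injections $\kk[x]/I \hookrightarrow \prod_i \kk[x]/I_i$ and $\kk[y]/J \hookrightarrow \prod_j \kk[y]/J_j$; by flatness of $\kk$-modules and the distributivity of $\otimes$ over finite products, these combine into an injection
\[
\kk[x, y]/(I + J) \;\cong\; \kk[x]/I \otimes_\kk \kk[y]/J \;\hookrightarrow\; \prod_{i, j} \kk[x, y]/(I_i + J_j),
\]
and reading off the kernel in $\kk[x, y]$ gives $I + J = \bigcap_{i, j}(I_i + J_j)$.

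For the primariness claim, preimages of primary ideals under ring maps are primary, so it suffices to prove $I_i + J_j$ is primary in $\kk[x, y]$ whenever $I_i$ and $J_j$ are geometrically primary. By faithful flatness of the extension $\kk \hookrightarrow \bar\kk$, primariness of $\bar I_i + \bar J_j$ in $\bar\kk[x, y]$ descends to primariness of $I_i + J_j$ via the standard correspondence of associated primes under faithfully flat base change, so I work over $\bar\kk$. Geometric primariness ensures $\bar I_i$ and $\bar J_j$ are primary with prime radicals $\bar P$ and $\bar P'$. Then $\bar\kk[x, y]/(\bar P + \bar P') = \bar\kk[x]/\bar P \otimes_{\bar\kk} \bar\kk[y]/\bar P'$ is an integral domain, because the tensor product of two integral algebras over an algebraically closed field is integral, so $\sqrt{\bar I_i + \bar J_j} = \bar P + \bar P'$ is prime.

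The remaining and most delicate step, which I expect to be the main obstacle, is to show that $A \otimes_{\bar\kk} B$ has a unique associated prime, where $A = \bar\kk[x]/\bar I_i$ and $B = \bar\kk[y]/\bar J_j$ are primary. For this I would invoke the standard fact (see, e.g., Bourbaki's \emph{Algèbre Commutative}, Ch.~IV) that over an algebraically closed, or more generally a perfect, base field the associated primes of $A \otimes_{\bar\kk} B$ are precisely the sums $\mathfrak{p} + \mathfrak{q}$ with $\mathfrak{p} \in \mathrm{Ass}\, A$ and $\mathfrak{q} \in \mathrm{Ass}\, B$; since each factor has exactly one such prime, so does the tensor product. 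The perfect-field hypothesis in the strengthened statement enters precisely here, because inseparability can otherwise produce extra associated primes when passing through the algebraic closure.
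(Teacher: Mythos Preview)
Your proof is correct and follows the same strategy as the paper: pull everything back through $\phi^{-1}$ to the sum $I+J$ in $\kk[x,y]$, prove the intersection formula there, and then argue that the sum of two geometrically primary ideals in disjoint variable sets is primary. The only implementation differences are that the paper handles the intersection step by the elementary observation $(I_1 \cap I_2) + J = (I_1+J)\cap(I_2+J)$ for ideals in disjoint variables (in place of your flatness/tensor argument), and for primariness of $I_i+J_j$ it cites \cite[Prop.~1.2(iv)]{Simis2000} rather than the Bourbaki associated-primes formula and the explicit descent from $\bar\kk$ that you sketch.
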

\begin{proof}
  First we show that the decomposition is valid.  This follows if we show that for
  all $\nn\cala$ homogeneous ideals $I_1, I_2 \in \kk[x]$ and $J \in \kk[y]$,
$$
(I_1 \cap I_2) \tfp J  =  (I_1 \tfp J) \cap (I_2 \tfp J). 
$$
Let $\phi : \kk[z] \rightarrow \kk[x] \otimes_\kk \kk[y]$ be the $\kk$-algebra
homomorphism such that $z^i_{jk} \mapsto x^i_j \otimes y^i_k$.  A polynomial $f$
belongs to a toric fiber product $I \tfp J$ if and only if $\phi(f) \in I + J
\subseteq \kk[x] \otimes_\kk \kk[y] = \kk[x,y]$.  Thus
\begin{eqnarray*}
 f \in (I_1 \cap I_2) \tfp J & \Leftrightarrow &  \phi(f) \in   (I_1 \cap I_2)  + J \\
& \Leftrightarrow & \phi(f) \in  (I_1 + J) \cap (I_2 + J) \\
& \Leftrightarrow & f \in (I_1 \tfp J) \cap (I_2 \tfp J),
\end{eqnarray*}
where the second equivalence is because $I_{i}$ and $J$ are ideals in disjoint
sets of variables.

For the second claim, since $I_i \tfp J_j$ is the inverse image of $I_i + J_j$,
and inverse images of primary ideals are primary, it suffices to show, for any
geometrically primary ideals $I \subseteq \kk[x]$ and $J \subseteq \kk[y]$, that
$I+J \subseteq \kk[x,y] $ is geometrically primary.  First, note that the
statement clearly holds if $I$ and $J$ are geometrically prime ideals, since the
join of two irreducible varieties is irreducible.  The proof of Proposition 1.2
(iv) in \cite{Simis2000} contains the cases of geometrically primary ideals.
\end{proof}

\begin{thm}\label{thm:irredundant}
Suppose that $\cala$ is linearly independent.  Then
the decomposition 
\begin{equation} \label{eq:primaryAgain}
I \times_\cala J  =  \cap_{i = 1}^k \cap_{j = 1}^l  I_i \times_\cala J_j
\end{equation}
is irredundant if and only if for all $i_{1}, i_{2} \in [k]$ and $j_{1}, j_{2} \in
[l]$ with $i_{1} \neq i_{2}$ or $j_{1} \neq j_{2}$ either:
\begin{itemize}
\item there exists $\bfa \in \nn\cala$ such that $(I_{i_1})_\bfa \not\subseteq
  (I_{i_2})_\bfa$ and $(J_{j_{2}})_\bfa \neq \kk[y]_\bfa$, or
\item there exists $\bfb \in \nn\cala$ such that $(J_{j_1})_\bfb \not\subseteq
  (J_{j_2})_\bfb$ and $(I_{i_{2}})_\bfb \neq \kk[x]_\bfb$.
\end{itemize}
\end{thm}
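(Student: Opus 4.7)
My plan has two stages: first characterize, degree by degree, when $I_{i_1} \times_\cala J_{j_1} \subseteq I_{i_2} \times_\cala J_{j_2}$, then use this to translate irredundance into the stated pairwise conditions.

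For the first stage, since $\cala$ is linearly independent, Proposition~\ref{prop:msptfp} implies that the map $\phi_\bfa\colon \kk[z]_\bfa \twoheadrightarrow \kk[x]_\bfa \otimes_\kk \kk[y]_\bfa$ is surjective and identifies $(I_i \times_\cala J_j)_\bfa$ with the preimage of $(I_i)_\bfa \otimes_\kk \kk[y]_\bfa + \kk[x]_\bfa \otimes_\kk (J_j)_\bfa$. So ideal containment in degree $\bfa$ coincides with containment of these subspaces inside $V \otimes_\kk W$, writing $V = \kk[x]_\bfa$, $W = \kk[y]_\bfa$, $V_k = (I_k)_\bfa$, $W_k = (J_k)_\bfa$. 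The key linear-algebra lemma is that
\[
V_1 \otimes W + V \otimes W_1 \subseteq V_2 \otimes W + V \otimes W_2
\]
iff ($V_1 \subseteq V_2$ or $W_2 = W$) and ($W_1 \subseteq W_2$ or $V_2 = V$). This follows from the identification $(V \otimes W)/(V_k \otimes W + V \otimes W_k) = (V/V_k) \otimes_\kk (W/W_k)$ together with the fact that over a field a pure tensor $\bar u \otimes \bar v$ vanishes iff $\bar u = 0$ or $\bar v = 0$. Negating this lemma and varying $\bfa$ gives exactly the theorem's disjunction of existence conditions as the characterization of $I_{i_1} \times_\cala J_{j_1} \not\subseteq I_{i_2} \times_\cala J_{j_2}$.

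For the second stage, I would argue that irredundance of the decomposition is equivalent to pairwise non-containment of the factors. The implication ``irredundant $\Rightarrow$ no containment'' is immediate, since any containment would let us drop a factor. For the converse, fix $(i_0, j_0)$ and produce an element of $\bigcap_{(i,j) \neq (i_0, j_0)} I_i \times_\cala J_j$ that lies outside $I_{i_0} \times_\cala J_{j_0}$. The idea: find homogeneous $u \in (\bigcap_{i \neq i_0} I_i)_\bfa \setminus (I_{i_0})_\bfa$ and $v \in (\bigcap_{j \neq j_0} J_j)_\bfa \setminus (J_{j_0})_\bfa$ of a common $\cala$-degree $\bfa$, then lift $u \otimes v \in V \otimes_\kk W$ through the surjection $\phi_\bfa$ to $f \in \kk[z]_\bfa$. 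For each $(i,j) \neq (i_0, j_0)$, either $i \neq i_0$ (so $u \in I_i$) or $j \neq j_0$ (so $v \in J_j$), whence $u \otimes v \in (I_i)_\bfa \otimes \kk[y]_\bfa + \kk[x]_\bfa \otimes (J_j)_\bfa$ and $f \in I_i \times_\cala J_j$. On the other hand, $u \otimes v$ projects to a nonzero pure tensor in $\kk[x]_\bfa/(I_{i_0})_\bfa \otimes_\kk \kk[y]_\bfa/(J_{j_0})_\bfa$, so $f \notin I_{i_0} \times_\cala J_{j_0}$.

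The main obstacle is producing $u$ and $v$ of a common $\cala$-degree. The pairwise conditions applied to $\bigl((i, j_0), (i_0, j_0)\bigr)$ for $i \neq i_0$ and $\bigl((i_0, j), (i_0, j_0)\bigr)$ for $j \neq j_0$ yield such elements $u$ and $v$ in some degrees $\bfa_u$ and $\bfa_v$, but $\bfa_u$ may differ from $\bfa_v$. Using that $I_{i_0}$ and $J_{j_0}$ are primary with radicals $P_{i_0}$ and $Q_{j_0}$, I would equalize the degrees by multiplying $u$ by some $\xi \in \kk[x]_{\bfa_v} \setminus P_{i_0}$ and $v$ by some $\eta \in \kk[y]_{\bfa_u} \setminus Q_{j_0}$; these substitutions preserve membership in the other $I_i$'s (resp.\ $J_j$'s) and, by primary-ness, preserve non-membership in $I_{i_0}$ (resp.\ $J_{j_0}$). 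The existence of such multipliers is ensured precisely by the clauses $(I_{i_2})_\bfa \neq \kk[x]_\bfa$ and $(J_{j_2})_\bfa \neq \kk[y]_\bfa$ appearing in the theorem's hypotheses.
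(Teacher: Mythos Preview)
Your first stage is correct and matches the paper's approach: both exploit Proposition~\ref{prop:msptfp} to identify the codimension-zero toric fiber product with the multigraded Segre product and then reduce the containment $I_{i_1}\times_\cala J_{j_1}\subseteq I_{i_2}\times_\cala J_{j_2}$ to a graded tensor-product statement. Your linear-algebra lemma is a clean formulation of exactly the fact the paper establishes.

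The gap is in your second stage. To build the witness $f$ you need $u\in\bigl(\bigcap_{i\neq i_0}I_i\bigr)\setminus I_{i_0}$, which is precisely the assertion that the input decomposition $I=\bigcap_i I_i$ is irredundant in the ``cannot drop a component'' sense. The theorem does not assume this, and the pairwise hypotheses applied to $\bigl((i,j_0),(i_0,j_0)\bigr)$ yield only $I_i\not\subseteq I_{i_0}$ for each $i\neq i_0$ separately; pairwise non-containment of primary ideals does not force irredundancy of their intersection (for instance $(x^2,y)\cap(x,y^2)=(x,y)^2$ in $\kk[x,y]$, with no pairwise containments among the three $(x,y)$-primary ideals). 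Your multiplier step has the same flavor of trouble: the theorem's clauses furnish $(I_{i_0})_\bfb\neq\kk[x]_\bfb$ only at a degree $\bfb$ coming from the hypothesis, not at your chosen $\bfa_v$, and even then you would need $\xi\notin\sqrt{I_{i_0}}$ rather than merely $\xi\notin I_{i_0}$.

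The paper does not attempt your stage two at all: it simply asserts that the decomposition is redundant if and only if some pairwise containment $I_{i_1}\times_\cala J_{j_1}\subseteq I_{i_2}\times_\cala J_{j_2}$ occurs, i.e.\ it reads ``irredundant'' as ``no pairwise containment among the $I_i\times_\cala J_j$''. Under that reading your stage one already proves the theorem and stage two is unnecessary.
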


\begin{proof}
  To deal with redundancy of the decomposition, we must describe conditions on
  $I, K \subseteq \kk[x]$ and $J,L \subseteq \kk[y]$ that imply $I \tfp J
  \subseteq K \tfp L$.
  Let $R =\kk[x]/I$, $S = \kk[y]/J$, $R' = \kk[x]/K$, and $S' = \kk[y]/L$.  Since
  $\cala$ is linearly independent, the rings $\kk[z]/ (I \tfp J)$ and $\kk[z]/ (K
  \tfp L)$ are multigraded Segre products.  So $I \tfp J \subseteq K \tfp L$ if
  and only if $R' \msp S'$ is a quotient of $R \msp S$ by the ideal generated by
  the image of $K \tfp L$ in $R \msp S$.  On the level of the homogeneous
  components, we require that $R'_\bfa \otimes_\kk S'_\bfa = R_\bfa \otimes_\kk
  S_\bfa / (K \tfp L)_\bfa$, as $\kk$-vector spaces.  There are two ways that
  $R'_\bfa \otimes_\kk S'_\bfa$ could be a quotient of $R_\bfa \otimes_\kk
  S_\bfa$.  If $I_\bfa \subseteq K_\bfa$ and $J_\bfa \subseteq L_\bfa$, then $(I
  \tfp J)_{\bfa} \subseteq (K \tfp L)_{\bfa}$, in which case we have the desired
  quotient.  The second way is if the tensor product $R'_\bfa \otimes_\kk S'_\bfa
  = \{0 \}$, which happens if and only if either $R'_\bfa$ or $S'_\bfa$ is
  $\{0\}$.  On the level of ideals, this happens if and only if either $K_\bfa =
  \kk[x]_\bfa$ or $L_\bfa = \kk[y]_\bfa$.

  The decomposition~\eqref{eq:primaryAgain} is redundant if and only if there are
  $i_1, i_2$ and $j_1, j_2$ where $I_{i_1} \tfp J_{j_1} \subseteq I_{i_2} \tfp
  J_{j_2}$ (where one of $i_1 = i_2$ and $j_1 = j_2$ is allowed, but not both).
  Now $I_{i_1} \tfp J_{j_1} \subseteq I_{i_2} \tfp J_{j_2}$ if and only if for all
  $\bfa \in \nn\cala$, $(\kk[x]/I_{i_2})_\bfa \otimes_\kk (\kk[y]/J_{j_2})_\bfa$
  is a quotient of $(\kk[x]/I_{i_1})_\bfa \otimes_\kk (\kk[y]/J_{j_1})_\bfa$.
  This happens if and only if for each $\bfa \in \nn\cala$ the condition in the
  previous paragraph is satisfied.  Thus, $I_{i_1} \tfp J_{j_1} \not\subseteq
  I_{i_2} \tfp J_{j_2}$ if and only if the negation of this condition holds.
  Choosing $\bfa$ from the first condition of the theorem with respect to $j =
  j_2$, yields the desired non-containment in the case $i_1 \neq i_2$.  If $i_1 =
  i_2$ and $j_1 \neq j_2$, we choose $\bfb$ from the second condition of the
  theorem with respect to $i = i_1$.  This proves the sufficiency of the
  conditions.

  The two conditions are necessary since the first is necessary for $I_{i_1} \tfp
  J_{j} \not\subseteq I_{i_2} \tfp J_{j}$, while the second is necessary for $I_i
  \tfp J_{j_1} \not\subseteq I_i \tfp J_{j_2}$.
\end{proof}

\begin{cor} \label{cor:pers-prim-decomp-1} Let $\cala$ be linearly independent.
  Suppose that $I = I_1 \cap \cdots \cap I_k$ and $J = J_1 \cap \cdots \cap J_l$
  are $\cala$ homogeneous irredundant primary decompositions of $I$ and $J$ into
  geometrically primary ideals, and that for each $i \in [k]$, $j \in [l]$, and
  $\bfa \in \nn\cala$, neither $(I_i)_\bfa = \kk[x]_\bfa$ nor $(J_j)_\bfa =
  \kk[y]_\bfa$.  Then
  \begin{equation*}
    I \times_\cala J  =  \cap_{i = 1}^k \cap_{j = 1}^l  I_i \times_\cala J_j
\end{equation*}
is an irredundant primary decomposition of $I \tfp J$.
\end{cor}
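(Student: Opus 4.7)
The plan is to apply Theorem~\ref{thm:primary-decomp} to obtain the primary decomposition, and then verify irredundancy by checking the criterion of Theorem~\ref{thm:irredundant}. Since the hypotheses already include that the $I_{i}$ and $J_{j}$ are $\cala$-homogeneous and geometrically primary, Theorem~\ref{thm:primary-decomp} immediately asserts that the given intersection is a primary decomposition of $I \times_\cala J$. So only irredundancy needs work.

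For irredundancy, I would fix a pair $((i_{1},j_{1}), (i_{2},j_{2}))$ with $i_{1}\neq i_{2}$ or $j_{1}\neq j_{2}$ and split into two cases. If $i_{1}\neq i_{2}$, then irredundancy of the primary decomposition $I = I_{1}\cap\cdots\cap I_{k}$ gives $I_{i_{1}}\not\subseteq I_{i_{2}}$; since both ideals are $\cala$-homogeneous, this non-containment must already be visible in some multigraded piece, i.e.\ there exists $\bfa \in \nn\cala$ with $(I_{i_{1}})_{\bfa}\not\subseteq (I_{i_{2}})_{\bfa}$. Combined with the standing hypothesis $(J_{j_{2}})_{\bfa}\neq \kk[y]_{\bfa}$, this verifies the first bullet of Theorem~\ref{thm:irredundant}. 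If instead $i_{1}=i_{2}$ and $j_{1}\neq j_{2}$, the symmetric argument applied to $J = J_{1}\cap\cdots\cap J_{l}$ produces $\bfb\in\nn\cala$ with $(J_{j_{1}})_{\bfb}\not\subseteq (J_{j_{2}})_{\bfb}$, and $(I_{i_{2}})_{\bfb}\neq \kk[x]_{\bfb}$ by hypothesis, so the second bullet is satisfied.

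The only step that needs a moment's care is the translation of $I_{i_{1}}\not\subseteq I_{i_{2}}$ into a statement about a single multigraded component. Since both ideals are graded, if $f \in I_{i_{1}}\setminus I_{i_{2}}$, then decomposing $f$ into its $\cala$-homogeneous parts, at least one such part lies in $(I_{i_{1}})_{\bfa}\setminus (I_{i_{2}})_{\bfa}$ for some~$\bfa \in \nn\cala$. This is the only point where $\cala$-homogeneity of the $I_{i}$ and $J_{j}$ is used beyond Theorem~\ref{thm:primary-decomp}. After this observation the corollary is a direct consequence of Theorem~\ref{thm:irredundant}, so no substantial obstacle is expected.
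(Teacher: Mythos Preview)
Your proposal is correct and follows essentially the same route as the paper: apply Theorem~\ref{thm:primary-decomp} for the primary decomposition, then verify the two bullets of Theorem~\ref{thm:irredundant} using irredundancy of the given decompositions together with the hypothesis that no graded piece is full. Your extra remark explaining why a non-containment of $\cala$-homogeneous ideals is witnessed in some single graded component is a detail the paper leaves implicit.
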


\begin{proof}
  We combine Theorems~\ref{thm:primary-decomp} and~\ref{thm:irredundant}.  Since
  the ideals $I_i$ and $J_j$ are all geometrically primary, the decomposition of
  $I \tfp J$ is a primary decomposition.  Since the decomposition of $I$ is
  irredundant, for each $i_1 \neq i_2$ there exists a $\bfa \in \nn\cala$ such
  that $(I_{i_1})_\bfa \not\subseteq (I_{i_2})_\bfa$ and, by assumption, for all
  $j$ $(J_j)_\bfa \neq \kk[y]_\bfa$.  Similarly, the decomposition of $J$ is
  irredundant, for each $j_1 \neq j_2$ there exists a $\bfb \in \nn\cala$ such
  that $(J_{j_1})_\bfb \not\subseteq (J_{j_2})_\bfb$ and, by assumption, for all
  $i$, $(I_i)_\bfb \neq \kk[y]_\bfb$.  This implies that the decomposition is
  irredundant.
\end{proof}

To apply Corollary~\ref{cor:pers-prim-decomp-1} iteratively, we need to control
when its hypotheses are preserved.
\begin{lemma}
  \label{lem:two-gradings}
  Let $\mathcal{A}$ be linearly independent, and let $\mathcal{B}$ induce a
  grading on $\kk[x]$ such that
  \begin{itemize}
  \item for all $\mathbf{b} \in \nn\mathcal{B}$ $(I)_{\mathbf{b}} \neq
    \kk[x]_{\mathbf{b}}$, and
  \item for all $\bfa \in \nn\mathcal{A}$ $(J)_{\bfa} \neq \kk[y]_{\bfa}$.
  \end{itemize}
  In this case $(I \times_{\mathcal{A}} J)_{\mathbf{b}} \neq \kk[z]_{\mathbf{b}}$
  for all $\mathbf{b}\in\nn\mathcal{B}$.
\end{lemma}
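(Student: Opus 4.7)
The plan is to transport the problem to the multigraded Segre product via Proposition~\ref{prop:msptfp} and then decompose the resulting vector space by the $\mathcal{B}$-grading. Throughout I assume that $I$ is homogeneous with respect to both $\mathcal{A}$ and $\mathcal{B}$ (otherwise $(I)_{\mathbf{b}}$ is not well-defined); the $\mathcal{B}$-grading on $\kk[z]$ is the natural one inherited from $\kk[x]$, namely $\deg_{\mathcal{B}}(z^i_{jk}) \defas \deg_{\mathcal{B}}(x^i_j)$, which is consistent with the map $z^i_{jk}\mapsto x^i_j\otimes y^i_k$ once we declare every element of $\kk[y]$ to have $\mathcal{B}$-degree $\mathbf{0}$.

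First I apply Proposition~\ref{prop:msptfp}, which is available because $\mathcal{A}$ is linearly independent, to obtain the isomorphism
\[
\kk[z]/(I \times_{\mathcal{A}} J) \;\cong\; R \msp S \;=\; \bigoplus_{\mathbf{a}\in\nn\mathcal{A}} R_{\mathbf{a}}\otimes_{\kk} S_{\mathbf{a}},
\]
where $R=\kk[x]/I$ and $S=\kk[y]/J$. This isomorphism respects the $\mathcal{B}$-grading, where $S$ is concentrated in $\mathcal{B}$-degree $\mathbf{0}$, so that the $\mathbf{b}$-graded component on the right is
\[
\bigoplus_{\mathbf{a}\in\nn\mathcal{A}} (R_{\mathbf{a}})_{\mathbf{b}} \otimes_{\kk} S_{\mathbf{a}}.
\]
Showing $(I\times_{\mathcal{A}} J)_{\mathbf{b}}\neq\kk[z]_{\mathbf{b}}$ is equivalent to exhibiting one $\mathbf{a}\in\nn\mathcal{A}$ for which this summand is nonzero.

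Fix $\mathbf{b}\in\nn\mathcal{B}$. By the first hypothesis $I_{\mathbf{b}}\subsetneq \kk[x]_{\mathbf{b}}$, so there exists a monomial $x^{\mathbf{u}}\in\kk[x]_{\mathbf{b}}$ whose class in $R$ is nonzero. Because $x^{\mathbf{u}}$ is also $\mathcal{A}$-homogeneous of some degree $\mathbf{a}\in\nn\mathcal{A}$, it witnesses $(R_{\mathbf{a}})_{\mathbf{b}}\neq 0$. For this same $\mathbf{a}$ the second hypothesis gives $J_{\mathbf{a}}\subsetneq\kk[y]_{\mathbf{a}}$, whence $S_{\mathbf{a}}\neq 0$. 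The tensor product over a field of two nonzero vector spaces is nonzero, so $(R_{\mathbf{a}})_{\mathbf{b}}\otimes_{\kk} S_{\mathbf{a}}\neq 0$, and the lemma follows.

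The argument is essentially routine bookkeeping; the only substantive points are the availability of Proposition~\ref{prop:msptfp} (which is why we need $\mathcal{A}$ linearly independent) and the observation that the $\mathcal{B}$-grading factors nicely through the multigraded Segre product because the $y$-variables carry trivial $\mathcal{B}$-degree. No significant obstacle is anticipated.
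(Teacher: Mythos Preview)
Your proof is correct and follows essentially the same route as the paper: both invoke the multigraded Segre product description of $\kk[z]/(I\times_{\mathcal A}J)$ (valid since $\mathcal A$ is linearly independent), refine the $\mathcal A$-graded pieces of $R$ by the $\mathcal B$-grading, and then observe that the first hypothesis furnishes an $\mathbf a$ with $R_{(\mathbf a,\mathbf b)}\neq 0$ while the second guarantees $S_{\mathbf a}\neq 0$. Your version is slightly more explicit (naming Proposition~\ref{prop:msptfp}, spelling out the $\mathcal B$-grading on $\kk[z]$, and producing a witnessing monomial), but the argument is the same.
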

\begin{proof}
  Let $R = \kk[x]/I$ and $S=\kk[y]/J$.  We decompose the $\mathcal{A}$-graded
  parts of $R$ into their $\mathcal{B}$-graded parts. The conclusion is equivalent
  to the statement that in \[\bigoplus_{(\bfa,\mathbf{b}) \in
    \nn(\mathcal{A},\mathcal{B})} R_{(\bfa,\mathbf{b})} \otimes_{\kk} S_{\bfa},\]
  for each $\mathbf{b}\in\nn\mathcal{B}$ there is an $\bfa\in\nn\mathcal{A}$ such
  that $R_{(\bfa,\bfb)} \otimes_{\kk} S_{\bfa} \neq 0$.  Since $(I)_{\mathbf{b}}
  \neq \kk[x]_{\mathbf{b}}$, for each $\mathbf{b}$ there exists an $\bfa$ such
  that $R_{(\bfa,\bfb)} \neq 0$.  Now the statement holds since $S_{\bfa} \neq
  0$.
\end{proof}

\begin{ex}[Monomial primary decomposition]
  \label{ex:redundant-decomposition}
  For monomial ideals $I,J \subseteq \kk[x] = \kk[x_1, \ldots, x_n]$ with the fine
  grading on $\kk[x]$, we have $I \times_\cala J = I + J$.  This formula
  and~\eqref{eq:primary} yield a highly redundant formula for the irreducible
  decomposition of a monomial ideal:
\[
\langle x^{\bfu_1}, \cdots, x^{\bfu_r} \rangle = \bigcap_{j_1, \ldots, j_r
  \in [n] } \langle x_{j_1}^{u_{1j_1}}, \cdots, x_{j_r}^{u_{rj_r}} \rangle.
\]
For an explicit example consider:
\begin{align*}
  \langle x^2y, xy^2 \rangle & = \langle x^2y \rangle \times_\cala \langle xy^2
  \rangle = ( \langle x^2 \rangle \cap \langle y \rangle ) \times_\cala (
  \langle x \rangle \cap \langle y^2 \rangle ) = \\
  & \qquad \langle x^2, x \rangle \cap \langle x^2, y^2 \rangle \cap \langle x, y
  \rangle \cap \langle y, y^2 \rangle = \langle x \rangle \cap \langle x^2, y^2
  \rangle \cap \langle y \rangle.
\end{align*}
Redundancy arises in the decomposition as this toric fiber product does not
satisfy the conditions of Theorem~\ref{thm:irredundant}, with respect to the two
pairs of ideals $\< x \>, \<y\>$ and $\<y^2\>$, $\<x^2\>$.  Finally, the
decomposition can be redundant even when the ideals are radical, as the following
calculation illustrates:
  \begin{align*}
    \langle xy, xz \rangle & = \langle xy \rangle \times_\cala \langle  xz \rangle
    = ( \langle x \rangle \cap \langle y \rangle ) \times_\cala  ( \langle x
    \rangle \cap \langle z \rangle ) = \\
    & \qquad \langle x, x  \rangle \cap \langle x, z \rangle \cap \langle y, x \rangle \cap \langle y, z \rangle =  \langle x  \rangle \cap  \langle y, z \rangle.
  \end{align*}
\end{ex}


\section{Generators of toric fiber products of toric ideals} \label{sec:generators}

To each higher codimension toric fiber product there is a natural codimension zero
product (Definition~\ref{def:asscodimzero}) which contributes many of the
generators.  There are also additional generators \emph{glued} from certain pairs
of generators of the original ideals.  Keeping track of the different
contributions requires substantial notation which we found managable only in the
case of toric ideals.  To verify our results we require that the generating sets
of the original ideals satisfy the \emph{compatible projection property}
(Definition~\ref{def:comp-proj-prop}).  Any generating set can be extended to one
that satisfies this property, but it may be inscrutable how to do so.  In special
cases, however, the condition becomes clear.  For instance, in codimension one
toric fiber products the simpler \emph{slow-varying condition}
(Definition~\ref{def:slow-varying}) implies the compatible projection property.

Let $I \tfp J$ be any toric fiber product.  Define the ideal $\tilde{I}$ by
\[
\tilde{I} = (I(X) + \langle x^i_j - X^i_j q^i : i \in [r], j \in [s_i] \rangle)
\cap \kk[x]
\]
where $X^{i}_{j}$, and $q^{i}$ are indeterminates and $I(X)$ denotes the ideal
obtained by replacing all occurrences of $x^{i}_{j}$ with $X^{i}_{j}$.  Define
$\tilde{J}\subset\kk[y]$ in the analogous way.  Let $\tilde{\cala} = \{e_1,
\ldots, e_r \}$ be the standard unit vectors in~$\nn^r$.  By construction,
$\tilde{I}$ and $\tilde{J}$ are homogeneous with respect to the grading induced by
$\deg(x^i_j) = \deg(y^i_k) = e_i$.  Consequently $\tilde{I}$ is the subideal of
$I$ generated by all $\tilde{\cala}$-homogeneous elements.  This property could
also be used to define~$\tilde{I}$.  Hence $\tilde{I} \subseteq I$ and similarly
$\tilde{J} \subseteq J$.

\begin{defn}\label{def:asscodimzero}
  The ideal $\tilde{I} \tfpo \tilde{J}$ is the \emph{associated codimension zero
    toric fiber product} to $I \tfp J$.
\end{defn} 

In this section, $I = I_\calb$ and $J = J_\calc$ are toric ideals.  As in
Section~\ref{sec:vector}, we describe their toric fiber product and its associated
codimension zero product by their vector configurations.  Consider the linearly
independent vector configuration $\tilde{\cala} = \{ (\bfa^i, e_i) : i \in [r]
\}$, where $e_{i}$ is the $i$th basis vector of~$\zz^{r}$.  Define vector
configurations
\[
\tilde{\calb} = \{ (\bfb^i_j, e_{i}) : i \in [r], j \in [s_i] \} 
\quad \quad \mbox{ and } \quad \quad 
\tilde{\calc} = \{  (\bfc^i_k,e_i) : i \in [r], k \in [t_i]  \}.
\]
Then $\widetilde{I_\calb} = I_{\tilde{\calb}}$, $\widetilde{J_\calc} =
J_{\tilde{\calc}}$, and
\[
\widetilde{I_\calb} \tfpo \widetilde{J_\calc} = I_{\tilde{\calb} \tfpo \tilde{\calc}}.
\]

To describe generators of the toric ideal $I_{\calb \tfp \calc}$, we first relate
them to Markov bases, via the fundamental theorem~\cite{Diaconis1998}.  Let $A \in
\zz^{d \times n}$ be a matrix, which defines a toric ideal $I_A = \< p^\bfu -
p^\bfv : A \bfu = A \bfv \> \subset \kk[p_{1},\dots,p_{n}]$.  Hence, binomial
generators of $I_A$ correspond to elements in $\ker A$.  The matrix $A$ defines an
$\nn$-linear map $\nn^n \to \zz^d$ whose image is the affine semigroup~$\nn A$.
Let $\bfb \in \nn A$.  The \emph{fiber} of $\bfb$ is the set $A^{-1}[\bfb] := \{
\bfu \in \nn^n : A \bfu = \bfb \}$.  Let $\mathcal{F} \subseteq \ker A$.  For each
$\bfb \in \nn A$ we associate a graph $A^{-1}[\bfb]_\calf$, with vertex set
consisting of all lattice points in $A^{-1}[\bfb]$ and an edge between $\bfu,\bfv
\in A^{-1}[\bfb]$ if either $\bfu - \bfv$ or $\bfv - \bfu \in \calf$.  A finite
subset $\calf \subseteq \ker A$ is a \emph{Markov basis} of $A$ if the graph
$A^{-1}[\bfb]_\calf$ is connected for each $\bfb \in \nn A$.  The fundamental
theorem of Markov bases connects these lattice-based definitions with the
generators of the toric ideal~$I_A$.

\begin{thm}[Fundamental Theorem of Markov Bases~\cite{Diaconis1998}]\label{thm:Diaconis}
  A finite subset $\calf \subseteq \ker A$ is a Markov basis of $A$ if and only if
  the set of binomials $\{ p^{\bff^+} - p^{\bff^-} : \bff \in \calf \}$ 
  generates~$I_{A}$.
\end{thm}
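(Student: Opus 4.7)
The plan is to prove the two directions separately, using throughout the decomposition $\bff = \bff^{+} - \bff^{-}$ of any $\bff \in \zz^{n}$ into positive and negative parts (both in $\nn^{n}$ with disjoint support).

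For the forward direction, I would recall that $I_{A}$ is spanned as a $\kk$-vector space by binomials $p^{\bfu} - p^{\bfv}$ with $A\bfu = A\bfv$, so it suffices to express every such binomial as a $\kk[p]$-linear combination of the $p^{\bff^{+}} - p^{\bff^{-}}$ with $\bff \in \calf$. Given $\bfu, \bfv$ in a common fiber $A^{-1}[\bfb]$, connectivity of $A^{-1}[\bfb]_{\calf}$ provides a path $\bfu = \bfu_{0}, \bfu_{1}, \ldots, \bfu_{m} = \bfv$ with $\bfu_{i-1} - \bfu_{i} = \pm \bff_{i}$ for some $\bff_{i} \in \calf$. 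Telescoping gives
\[
p^{\bfu} - p^{\bfv} \;=\; \sum_{i=1}^{m}\bigl(p^{\bfu_{i-1}} - p^{\bfu_{i}}\bigr).
\]
Letting $\bfw_{i}$ be the coordinatewise minimum of $\bfu_{i-1}$ and $\bfu_{i}$, one checks coordinate by coordinate that $\bfu_{i-1} - \bfw_{i}$ and $\bfu_{i} - \bfw_{i}$ equal $\bff_{i}^{+}$ and $\bff_{i}^{-}$ in some order, so each difference factors as $\pm p^{\bfw_{i}}(p^{\bff_{i}^{+}} - p^{\bff_{i}^{-}})$, completing this direction.

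For the converse, I would assume that the binomials $p^{\bff^{+}} - p^{\bff^{-}}$, $\bff\in\calf$, generate $I_{A}$, and I would fix $\bfu, \bfv \in A^{-1}[\bfb]$ with the goal of proving they lie in a common component of $A^{-1}[\bfb]_{\calf}$. Since $I_{A}$ and each generator are $\cala$-homogeneous (the generator $p^{\bff^{+}} - p^{\bff^{-}}$ has degree $A\bff^{+} = A\bff^{-}$ because $\bff \in \ker A$), I may extract the degree-$\bfb$ part of an arbitrary representation of $p^{\bfu} - p^{\bfv}$ to obtain
\[
p^{\bfu} - p^{\bfv} \;=\; \sum_{i} c_{i}\, p^{\bfw_{i}}\bigl(p^{\bff_{i}^{+}} - p^{\bff_{i}^{-}}\bigr),
\]
in which every monomial $p^{\bfw_{i} + \bff_{i}^{\pm}}$ has exponent in $A^{-1}[\bfb]$. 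I then introduce the $\kk$-vector space $V$ with basis indexed by the connected components of $A^{-1}[\bfb]_{\calf}$ and the $\kk$-linear map $\psi : \kk[p]_{\bfb} \to V$ sending each monomial $p^{\bfx}$ to the basis vector of the component containing $\bfx$. Because $\bfw_{i} + \bff_{i}^{+}$ and $\bfw_{i} + \bff_{i}^{-}$ differ by $\bff_{i} \in \calf$, they lie in the same component, so each summand lies in $\ker \psi$. Hence $\psi(p^{\bfu} - p^{\bfv}) = 0$, which forces $\bfu$ and $\bfv$ into the same component.

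The hard part is the converse: the forward telescoping is essentially mechanical, but the converse must rule out the possibility that polynomial combinations of the generators mysteriously ``mix'' distinct connected components of a fiber. The projection $\psi$ is the crucial device. Verifying that it is well-defined and that each generator lies in its kernel hinges on the $\cala$-homogeneity of both $I_{A}$ and its binomial generators, which lets us reduce to a single fiber and thereby make combinatorial connectivity interact directly with ideal membership.
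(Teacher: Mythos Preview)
Your proof is correct. Both directions are handled cleanly: the forward telescoping argument is the standard one, and for the converse your linear map $\psi$ into the free $\kk$-vector space on connected components is exactly the right device to show that ideal membership cannot jump components.

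Note, however, that the paper itself does not prove this statement: Theorem~\ref{thm:Diaconis} is merely quoted from Diaconis--Sturmfels~\cite{Diaconis1998} and used as a black box. So there is no ``paper's own proof'' to compare against. Your argument is essentially the classical one found in the original reference and in standard expositions such as~\cite{Drton2009}.
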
 
The fundamental theorem implies that we can describe generating sets of toric
ideals, and especially important for us, toric fiber products of toric ideals, in
terms of lattice point combinatorics.
We use tableau notation for binomials and vectors.  To explain it, let
\begin{equation*}
x^{i_1}_{j_1} x^{i_2}_{j_2}  \cdots x^{i_n}_{j_n} - x^{i_1'}_{j_1'} x^{i_2'}_{j_2'}  \cdots x^{i_n'}_{j_n'}
\end{equation*}
be a homogeneous binomial in~$\kk[x]$.  To this binomial we associate the tableau of indices:
\begin{equation*}
\left[
\begin{array}{cc}
i_1 & j_1 \\
i_2 & j_2 \\
\vdots & \vdots \\
i_n & j_n 
\end{array}
\right]
-
\left[
\begin{array}{cc}
i_1' & j_1' \\
i_2' & j_2' \\
\vdots & \vdots \\
i_n' & j_n' 
\end{array}
\right].
\end{equation*}
Similarly, we can define the tableau associated to binomials in $\kk[y]$ and
$\kk[z]$, which might look like
\begin{equation*}
\left[
\begin{array}{cc}
i_1 & k_1 \\
i_2 & k_2 \\
\vdots & \vdots \\
i_n & k_n 
\end{array}
\right]
-
\left[
\begin{array}{cc}
i_1' & k_1' \\
i_2' & k_2' \\
\vdots & \vdots \\
i_n' & k_n' 
\end{array}
\right]
\quad \quad \mbox{ and }
\quad \quad
\left[
\begin{array}{ccc}
i_1 & j_1& k_1 \\
i_2 & j_2 & k_2\\
\vdots & \vdots & \vdots  \\
i_n & j_n & k_n 
\end{array}
\right]
-
\left[
\begin{array}{ccc}
i_1' & j_1' & k_1' \\
i_2' & j_2' & k_2' \\
\vdots & \vdots & \vdots\\
i_n' & j_n'& k_n' 
\end{array}
\right]
\end{equation*}
respectively.  Tableau notation greatly simplifies the description of Markov bases
of toric fiber products.


\subsection{Codimension zero toric fiber products}\label{sec:codim0}
We review the codimension zero case from~\cite{Sullivant2007} since generators of
the associated codimension zero toric fiber product are needed in our
construction.  Let $f \in I_{\calb}$ be a binomial written in tableau notation as
\begin{equation*}
f = \left[
\begin{array}{cc}
i_1 & j_1 \\
i_2 & j_2 \\
\vdots & \vdots \\
i_n & j_n 
\end{array}
\right]
-
\left[
\begin{array}{cc}
i_1' & j_1' \\
i_2' & j_2' \\
\vdots & \vdots \\
i_n' & j_n' 
\end{array}
\right].
\end{equation*}
Since $\cala$ is linearly independent, if $f \in I_{\calb}$, then the multiset of
indices $\{i_{1}, \ldots i_{n} \}$ equals the multiset of indices $\{i'_{1},
\ldots i'_{n} \}$.  So after rearranging the rows of the tableau, we can assume
that we have the following form:
\begin{equation*}
f = \left[
\begin{array}{cc}
i_1 & j_1 \\
i_2 & j_2 \\
\vdots & \vdots \\
i_n & j_n 
\end{array}
\right]
-
\left[
\begin{array}{cc}
i_1 & j_1' \\
i_2 & j_2' \\
\vdots & \vdots \\
i_n & j_n' 
\end{array}
\right].
\end{equation*}
Let $k_{1}, \ldots, k_{n}$ be a collection of indices such that
$z^{i_{t}}_{j_{t}k_{t}}$ is a variable in $\kk[z]$ for each~$t$.  Construct the
new polynomial
\begin{equation*}
\tilde{f} = \left[
\begin{array}{ccc}
i_1 & j_1 & k_{1}\\
i_2 & j_2 & k_{2}\\
\vdots & \vdots & \vdots \\
i_n & j_n & k_{n} 
\end{array}
\right]
-
\left[
\begin{array}{ccc}
i_1 & j_1' & k_{1}\\
i_2 & j_2' & k_{2}\\
\vdots & \vdots & \vdots \\
i_n & j_n' & k_{n} 
\end{array}
\right].
\end{equation*}
For a set of binomials $\mathcal{F} \subseteq I_{\calb}$ let ${\rm Lift}(\calf)$
to be the set of all binomials $\tilde{f}$ for all $f \in \calf$ and allowable
$k_{1}, \ldots, k_{n}$.  Similarly, for a collection of binomials $\calg \subseteq
J_{\calc}$, we can define ${\rm Lift} (\calg)$.

Lastly, we introduce a set ${\rm Quad}$ which consists of all binomial quadrics of
the form
\begin{equation*}
\tilde{f} = \left[
\begin{array}{ccc}
i & j_1 & k_{1}\\
i & j_2 & k_{2}\\
\end{array}
\right]
-
\left[
\begin{array}{ccc}
i & j_1 & k_{2}\\
i & j_2 & k_{1}\\
\end{array}
\right].
\end{equation*}

\begin{thm}[Codimension zero toric fiber products,~\cite{Sullivant2007}]\label{thm:codimzero}
  Let $I_{\calb} \subseteq \kk[x]$ and $J_{\calc} \subseteq \kk[y]$ be homogeneous
  with respect to the grading by $\cala$, and suppose that $\cala$ is linearly
  independent.  Let $\calf \subseteq I_{\calb}$ and $\calg \subseteq J_{\calc}$ be
  binomial generating sets.  Then
  \begin{equation*}
    \Lift(\calf) \cup \Lift(\calg) \cup \Quad
  \end{equation*}
  is a generating set of the codimension zero toric fiber product $I_{\calb} \tfp J_{\calc}$.
\end{thm}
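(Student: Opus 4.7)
The plan is to invoke the Fundamental Theorem of Markov Bases (Theorem~\ref{thm:Diaconis}) and reduce the claim to showing that the moves associated to $\Lift(\calf) \cup \Lift(\calg) \cup \Quad$ form a Markov basis for the vector configuration $\calb \tfp \calc$. Concretely, given two monomials $z^\bfu, z^\bfv$ lying in the same fiber of $\calb \tfp \calc$, I need to connect them by a chain of moves from the proposed set. Thinking of $z^\bfu$ as a tableau of triples $(i,j,k)$ with $x$- and $y$-projections $x^{\bfu_x}, y^{\bfu_y}$, linear independence of $\cala$ forces the multisets of $i$-indices on $\bfu$ and $\bfv$ to agree; the fiber condition then yields $\calb \bfu_x = \calb \bfv_x$ and $\calc \bfu_y = \calc \bfv_y$, so $\calf$ connects the $x$-projections and $\calg$ connects the $y$-projections within their respective fibers.

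The plan then proceeds in three stages. First, I would lift a chain of $\calg$-moves from $\bfu_y$ to $\bfv_y$ into a chain of $\Lift(\calg)$-moves on $z$-tableaux: each $\calg$-move, specified by its $(i,k)$-rows, is lifted by reading off the $j$-values from the current $z$-state, which is valid because the $y$-projection of the current $z$-state equals the current $y$-state. Lifted $\calg$-moves preserve the $j$-column of every row they touch, so the $x$-projection is untouched by this stage. Second, I would analogously lift a chain of $\calf$-moves, which rearranges the $j$-column while leaving the $k$-column intact. After these two stages, the tableau shares the $(i,j)$- and $(i,k)$-multisets with $\bfv$; what may still differ is the pairing of $j$- and $k$-values within each $i$-group.

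The third stage uses $\Quad$ moves to correct those pairings. A $\Quad$ move transposes the $k$-values of two rows sharing an $i$-index, so the question reduces to whether any two multisets of $(j,k)$-pairs with prescribed $j$- and $k$-marginals are connected by such transpositions. I expect this step to be the main obstacle, though it reduces to a classical transportation-polytope fact: for a bipartite multigraph on $(j,k)$-nodes with fixed row and column sums, the ``square move'' of swapping the endpoints of two edges suffices to connect any two such multigraphs, exactly as in the generation of moves for $2$-way contingency tables. The remaining technical point is to verify at each lift that the current $z$-monomial contains the rows demanded by the move; this holds automatically because the projection of the current $z$-state is by construction the monomial to which the unlifted move applies.
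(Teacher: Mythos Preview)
The paper does not actually prove this theorem; it is quoted from \cite{Sullivant2007} and stated without proof as background for Section~\ref{sec:generators}. So there is no in-paper argument to compare against.

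That said, your proposal is correct and is essentially the argument given in \cite{Sullivant2007}. The three-stage decomposition --- first use $\Lift(\calg)$ to match the $y$-projection while fixing the $x$-projection, then $\Lift(\calf)$ to match the $x$-projection while fixing the $y$-projection, and finally $\Quad$ to repair the $(j,k)$-pairings within each $i$-block --- is exactly the structure of Sullivant's original proof. Your identification of the third stage with the connectivity of $2$-way contingency-table fibers under the basic $2\times 2$ swap is the right way to justify that $\Quad$ suffices, and your remark that at each step the current $z$-tableau projects onto the monomial to which the unlifted move applies is precisely what makes the lifting well-defined along the chain. There is no gap.
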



\subsection{The compatible projection property}
Suppose that $f \in I_\calb$ and $g \in J_\calc$ are two binomials of degree $n$,
written in tableau notation as
\begin{equation*}
f = \left[
\begin{array}{cc}
i_1 & j_1 \\
i_2 & j_2 \\
\vdots & \vdots \\
i_n & j_n 
\end{array}
\right]
-
\left[
\begin{array}{cc}
i_1' & j_1' \\
i_2' & j_2' \\
\vdots & \vdots \\
i_n' & j_n' 
\end{array}
\right]
\quad \quad
\mbox{ and }
\quad \quad
g = 
\left[
\begin{array}{cc}
i_1 & k_1 \\
i_2 & k_2 \\
\vdots & \vdots \\
i_n & k_n 
\end{array}
\right]
-
\left[
\begin{array}{cc}
i_1' & k_1' \\
i_2' & k_2' \\
\vdots & \vdots \\
i_n' & k_n' 
\end{array}
\right].
\end{equation*}
In particular assume that the first column of the leading and trailing monomial of
$f$ agrees with the first column of the leading and trailing monomial of $g$,
respectively.  In this situation, we define $\glue(f,g)$ to be the binomial
\begin{equation*}
\glue(f,g) =
\left[
\begin{array}{ccc}
i_1 & j_1& k_1 \\
i_2 & j_2 & k_2\\
\vdots & \vdots & \vdots  \\
i_n & j_n & k_n 
\end{array}
\right]
-
\left[
\begin{array}{ccc}
i_1' & j_1' & k_1' \\
i_2' & j_2' & k_2' \\
\vdots & \vdots & \vdots\\
i_n' & j_n'& k_n' 
\end{array}
\right].
\end{equation*}
Let $\kk[w] := \kk[w^{1}, \ldots, w^{r}]$, and define $\kk$-algebra homomorphisms
$\phi_{xw}$ and $\phi_{yw}$ by
\begin{equation*}
\phi_{xw}:  \kk[x] \rightarrow \kk[w]  \quad x^{i}_{j} \mapsto  w^{i},
\end{equation*}
\begin{equation*}
\phi_{yw}:  \kk[y] \rightarrow \kk[w]  \quad y^{i}_{k} \mapsto  w^{i}.
\end{equation*}
In general, we define the gluing operation on pairs of binomials $f \in I_\calb$
and $g \in J_\calc$ such that $\phi_{xw}(f) = w^{\bfv_1}(w^{\bfu_1} - w^{\bfu_2})$
and $\phi_{yw}(g) = w^{\bfv_2}(w^{\bfu_1} - w^{\bfu_2})$.  The binomial part in
both products are assumed to be the same, and we say that $f$ and $g$ are
\emph{compatible}.  Furthermore, we can assume that $\gcd(w^{\bfv_1}, w^{\bfv_2})
= 1$, by not factoring the polynomials completely.

Define $L(w^{\bfv_2})$ to be the set of all monomials $x^\bfv$ in $\kk[x]$ such
that $\phi_{xw}(x^\bfv) = w^{\bfv_2}$.  Similarly, define $R(w^{\bfv_1})$ to be
the set of monomials $y^\bfv$ in $\kk[y]$ such that $\phi_{yw}(y^\bfv) =
w^{\bfv_1}$.  By construction if $x^{\bfv} \in L(w^{\bfv_2})$ and $y^{\bfv'} \in
R(w^{\bfv_1})$ then $x^\bfv f$ and $y^{\bfv'} g$, when written as tableau and
after reordering rows, have exactly the same first column.  Thus, we can form the
binomial $\glue(x^\bfv f, y^{\bfv'} g)$.

\begin{defn}
  Let $\calf \subseteq I_\calb$ and $\calg \subseteq J_\calc$ consist of
  binomials.  The \emph{glued binomials} are
  \begin{equation*} \Glue(\calf, \calg) = \{ \glue(x^\bfv f, y^{\bfv'}
    g) : f \in \calf, g \in \calg \mbox{ compatible, } x^\bfv \in
    L(w^{\bfv_2}), y^{\bfv'} \in R(w^{\bfv_1}) \}.
\end{equation*}
The set of exponent vectors of binomials in $\Glue (\calf, \calg)$ is $\GGlue
(\calf, \calg)$.
\end{defn}

\begin{prop}
  If $\calf \subseteq I_\calb$ and $\calg \subseteq J_\calc$ are sets of binomials
  then
  \[{\Glue}(\calf, \calg)  \subset I_{\calb \tfp \calc}.\]
\end{prop}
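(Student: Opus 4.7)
The plan is to exploit the description $I_{\calb\tfp\calc}=I\tfp J=\phi^{-1}(I+J)$ from the remark after Definition~\ref{def:TFP}, where $\phi\colon\kk[z]\to\kk[x,y]$ sends $z^{i}_{jk}\mapsto x^{i}_{j}y^{i}_{k}$. Concretely, it suffices to check that applying $\phi$ to every element of $\Glue(\calf,\calg)$ yields an element of $I_\calb+J_\calc$. Once this image is in $I_\calb+J_\calc$, the binomial lies in $I_{\calb\tfp\calc}$ by definition.

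First I would unpack the hypotheses. Compatibility of $f\in\calf$ and $g\in\calg$ means $\phi_{xw}(f)=w^{\bfv_{1}}(w^{\bfu_{1}}-w^{\bfu_{2}})$ and $\phi_{yw}(g)=w^{\bfv_{2}}(w^{\bfu_{1}}-w^{\bfu_{2}})$. Multiplying $f$ by $x^{\bfv}\in L(w^{\bfv_{2}})$ and $g$ by $y^{\bfv'}\in R(w^{\bfv_{1}})$ yields binomials $x^{\bfv}f=x^{\bfalpha}-x^{\bfbeta}$ in $I_\calb$ and $y^{\bfv'}g=y^{\bfgamma}-y^{\bfdelta}$ in $J_\calc$ whose $w$-projections agree: $\phi_{xw}(x^{\bfalpha})=\phi_{yw}(y^{\bfgamma})=w^{\bfv_{1}+\bfv_{2}+\bfu_{1}}$ and $\phi_{xw}(x^{\bfbeta})=\phi_{yw}(y^{\bfdelta})=w^{\bfv_{1}+\bfv_{2}+\bfu_{2}}$. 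Hence in tableau form, after reordering rows, both leading monomials share the same first column of $i$-indices $(i_{1},\ldots,i_{N})$, and likewise for the trailing monomials; write these as $x^{\bfalpha}=\prod_{t}x^{i_{t}}_{j_{t}}$, $x^{\bfbeta}=\prod_{t}x^{i_{t}}_{j'_{t}}$, $y^{\bfgamma}=\prod_{t}y^{i_{t}}_{k_{t}}$, $y^{\bfdelta}=\prod_{t}y^{i_{t}}_{k'_{t}}$.

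By definition of $\mathrm{glue}$, the candidate binomial is
\begin{equation*}
\mathrm{glue}(x^{\bfv}f,y^{\bfv'}g)=\prod_{t}z^{i_{t}}_{j_{t}k_{t}}-\prod_{t}z^{i_{t}}_{j'_{t}k'_{t}},
\end{equation*}
whose image under $\phi$ is $x^{\bfalpha}y^{\bfgamma}-x^{\bfbeta}y^{\bfdelta}$. The key step is the telescoping identity
\begin{equation*}
x^{\bfalpha}y^{\bfgamma}-x^{\bfbeta}y^{\bfdelta}=y^{\bfgamma}\,(x^{\bfalpha}-x^{\bfbeta})+x^{\bfbeta}\,(y^{\bfgamma}-y^{\bfdelta})=y^{\bfgamma}\cdot(x^{\bfv}f)+x^{\bfbeta}\cdot(y^{\bfv'}g),
\end{equation*}
which is visibly in $I_\calb+J_\calc$ since $f\in I_\calb$ and $g\in J_\calc$. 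Therefore $\phi(\mathrm{glue}(x^{\bfv}f,y^{\bfv'}g))\in I_\calb+J_\calc$, giving $\mathrm{glue}(x^{\bfv}f,y^{\bfv'}g)\in\phi^{-1}(I_\calb+J_\calc)=I_{\calb\tfp\calc}$.

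The only nontrivial point is the bookkeeping in the second paragraph: one must verify that the tableaux of $x^{\bfv}f$ and $y^{\bfv'}g$ can really be aligned row-by-row so that the $z$-monomials in $\mathrm{glue}$ are well-defined. This is precisely what the common binomial factor $w^{\bfu_{1}}-w^{\bfu_{2}}$ in $\phi_{xw}(x^{\bfv}f)$ and $\phi_{yw}(y^{\bfv'}g)$ enforces; all remaining computations are formal.
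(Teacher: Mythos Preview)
Your argument is correct, but it takes a different route from the paper. The paper exploits the toric-specific criterion that a binomial $h\in\kk[z]$ lies in $I_{\calb\tfp\calc}$ if and only if $\phi_{zx}(h)\in I_\calb$ and $\phi_{zy}(h)\in J_\calc$, where $\phi_{zx}(z^{i}_{jk})=x^{i}_{j}$ and $\phi_{zy}(z^{i}_{jk})=y^{i}_{k}$; it then simply observes that these two projections of $\mathrm{glue}(x^{\bfv}f,y^{\bfv'}g)$ are literally $x^{\bfv}f$ and $y^{\bfv'}g$. Your approach via $I\tfp J=\phi^{-1}(I+J)$ and the telescoping identity is a step longer but has the advantage of not invoking the toric structure at all---it would work verbatim for arbitrary $\cala$-homogeneous ideals, not just $I_\calb$ and $J_\calc$.

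One small notational slip: you write $x^{\bfbeta}=\prod_{t}x^{i_{t}}_{j'_{t}}$ with the \emph{same} indices $i_{t}$ as in $x^{\bfalpha}$, but in general $\bfu_{1}\neq\bfu_{2}$, so the trailing monomials carry a different first column $(i'_{1},\ldots,i'_{N})$. This does not affect your telescoping step, which only needs $x^{\bfalpha}-x^{\bfbeta}=x^{\bfv}f$ and $y^{\bfgamma}-y^{\bfdelta}=y^{\bfv'}g$, but the notation should be corrected.
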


\begin{proof}
  For toric ideals, a binomial $h \in \kk[z]$ belongs to $I_{\calb \tfp
    \calc}$ if and only if $\phi_{zx}(h) \in I_\calb$ and $\phi_{zy}(h) \in
  I_\calc$, where $\phi_{zx}$ and $\phi_{zy}$ are the $\kk$-algebra homomorphisms
  \begin{gather*}
    \phi_{zx} : \kk[z] \rightarrow \kk[x], \quad z^{i}_{j,k} \mapsto x^{i}_{j},\\
    \phi_{zy} : \kk[z] \rightarrow \kk[y], \quad z^{i}_{j,k} \mapsto y^{i}_{k}.
  \end{gather*}
  For any $\glue(x^\bfv f, y^{\bfv'} g)$ where $f \in I_{\calb}$ and $g \in
  J_{\calc}$, we have $\phi_{zx}( \glue(x^\bfv f, y^{\bfv'} g) ) = x^\bfv f
  \in I_{\calb}$, and $\phi_{zy}( \glue(x^\bfv f, y^{\bfv'} g) ) = y^{\bfv'}
  g \in J_{\calc}$.
\end{proof}

Consider the natural $\nn$-linear projection maps $\gamma : \nn^{\calb \tfp \calc}
\to \nn^r, \gamma(e^i_{jk}) = e_i$, $\gamma_1 : \nn^\calb \rightarrow \nn^r,
\gamma_1( e^i_j) = e_i$, and $\gamma_2: \nn^\calc \rightarrow \nn^r,
\gamma_2(e^i_k) = e_i$.
These projections evaluate the additional multidegrees appearing in the definition
of the associated codimension zero product.  They are also defined on the fibers
$\calb^{-1}[\bfb]$ and $\calc^{-1}[\bfc]$ and the graphs $\calb^{-1}[\bfb]_\calf$
and $\calc^{-1}[\bfc]_\calg$.  Note that if $\bff \in \ker \calb$ then $\gamma_1(\bff) \in \ker \cala$, and similarly for $\gamma$, and $\gamma_2$.

\begin{defn}
  Let $\calf \subseteq \ker \calb$.  The graph $\gamma_1( \calb^{-1}[\bfb]_\calf)$
  has vertex set $\gamma_1(\calb^{-1}[\bfb])$ and an edge between $\bfu'$ and
  $\bfv'$ if there are $\bfu, \bfv \in \calb^{-1}[\bfb]$ such that $\bfu$ and
  $\bfv$ are connected by an edge in $\calb^{-1}[\bfb]_\calf$ and $\gamma_1(\bfu)
  = \bfu'$ and $\gamma_1(\bfv) = \bfv'$.  Similarly define the graphs $\gamma_2(
  \calc^{-1}[\bfc]_\calg)$ and $\gamma((\calb \tfp \calc)^{-1}[
  (\bfb,\bfc)]_\calh)$ where $\calg \subseteq \ker \calc$ and $\calh \subseteq
  \ker \calb \tfp \calc$.  These are the \emph{projection graphs}.
\end{defn}
Given two graphs $G$ and $H$ with overlapping vertex sets, their intersection $G
\cap H$ is the graph with vertex set $V(G) \cap V(H)$ and edge set $E(G) \cap
E(H)$.

\begin{defn}\label{def:comp-proj-prop}
  Let $\calf \subseteq \ker \calb$ and $\calg \subseteq \ker \calc$.  The pair
  $\calf$ and $\calg$ has the \emph{compatible projection property} if for all
  $\bfb \in \nn\calb$ and $\bfc \in \nn\calc$ such that $\pi_1(\bfb) =
  \pi_2(\bfc)$, the graph
  \begin{equation*}
    \gamma_1( \calb^{-1}[\bfb]_\calf) \cap \gamma_2( \calc^{-1}[\bfc]_\calg)
  \end{equation*}
  is connected.
\end{defn}
The next lemma is the main technical result allowing us to produce generating sets
for toric fiber products.

\begin{lemma}\label{lem:gluegraph}
  Let $\calf \subseteq \ker \calb$ and $\calg \subseteq \ker \calc$.  Let $\bfb
  \in \nn\calb$ and $\bfc \in \nn\calc$ such that $\pi_1(\bfb) = \pi_2(\bfc)$.
  Then
  \begin{equation*}
    \gamma( (\calb \tfp \calc)^{-1}[ (\bfb, \bfc)]_{{\GGlue}(\calf,\calg)}) \quad = \quad \gamma_1( \calb^{-1}[\bfb]_\calf)  \cap \gamma_2( \calc^{-1}[\bfc]_\calg).
\end{equation*}
\end{lemma}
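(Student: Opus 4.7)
The plan is to prove the claimed equality of graphs by establishing both inclusions at the level of vertex sets and then edge sets separately.  The forward edge inclusion is a direct computation, while the reverse edge inclusion is the main obstacle and requires a transportation argument to lift an edge of the intersection back into the fiber over $\calb \tfp \calc$.

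For vertex sets, I would observe that every $\bfw \in (\calb \tfp \calc)^{-1}[(\bfb, \bfc)]$ has marginals $\zeta_1(\bfw)^i_j := \sum_k w^i_{jk}$ in $\calb^{-1}[\bfb]$ and $\zeta_2(\bfw)^i_k := \sum_j w^i_{jk}$ in $\calc^{-1}[\bfc]$ with $\gamma_1(\zeta_1(\bfw)) = \gamma_2(\zeta_2(\bfw)) = \gamma(\bfw)$, so $\gamma(\bfw)$ lies in the intersection.  Conversely, given $\bfp$ in the intersection witnessed by $\bfu \in \calb^{-1}[\bfb]$ and $\bft \in \calc^{-1}[\bfc]$ with $\gamma_1(\bfu) = \gamma_2(\bft) = \bfp$, I would build a preimage $\bfw$ by solving, for each $i \in [r]$, the nonempty integer transportation problem with row sums $(u^i_j)_j$ and column sums $(t^i_k)_k$, both of total mass $p_i$.

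The edge inclusion $\subseteq$ is direct: any $\bfh = {\rm glue}(x^\bfv f, y^{\bfv'} g) \in \Glue(\calf, \calg)$ satisfies $\zeta_1(\bfh) = \bff$ and $\zeta_2(\bfh) = \bfg$, since the prefix monomials $x^\bfv, y^{\bfv'}$ appear identically on both sides of the tableau.  Hence an edge $\bfw \sim \bfw'$ in $(\calb \tfp \calc)^{-1}[(\bfb, \bfc)]_{\Glue(\calf,\calg)}$ pushes forward under $\zeta_1$ and $\zeta_2$ to edges in $\calb^{-1}[\bfb]_\calf$ and $\calc^{-1}[\bfc]_\calg$, which both project to the edge $\gamma(\bfw) \sim \gamma(\bfw')$ in the intersection.

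For $\supseteq$, suppose $\bfp$ and $\bfp'$ are joined in both projection graphs.  I would pick $\bfu, \bfu' \in \calb^{-1}[\bfb]$ and $\bft, \bft' \in \calc^{-1}[\bfc]$ with $\bfu - \bfu' = \bff \in \calf$ and $\bft - \bft' = \bfg \in \calg$, after sign flips so that $\gamma_1(\bff) = \gamma_2(\bfg) = \bfp - \bfp'$.  This common $\zz^r$-difference forces $\phi_{xw}(f)$ and $\phi_{yw}(g)$ to factor through a common primitive $w^{\bfu_1} - w^{\bfu_2}$, so $(f,g)$ is compatible with uniquely determined coprime prefixes $\bfv_1 = \max(\gamma_1(\bff_+) - \gamma_2(\bfg_+), 0)$ and $\bfv_2 = \max(\gamma_2(\bfg_+) - \gamma_1(\bff_+), 0)$.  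Using disjoint support of $\bff_+, \bff_-$, I would write $\bfu = \bfm + \bff_+$ and $\bft = \bfn + \bfg_+$ with $\bfm, \bfn \geq 0$.  I would then choose $x^\bfv \in L(w^{\bfv_2})$ with $\bfv \leq \bfm$ componentwise, which is possible because the capacity $\gamma_1(\bfm) = \bfp - \gamma_1(\bff_+)$ dominates $\bfv_2$ by virtue of $\bfp \geq \gamma_1(\bff_+)$ and $\bfp \geq \gamma_2(\bfg_+)$, and symmetrically choose $y^{\bfv'} \in R(w^{\bfv_1})$ with $\bfv' \leq \bfn$.  Forming any glue $\bfh = {\rm glue}(x^\bfv f, y^{\bfv'} g)$ and any transportation solution $\bfc \in \nn^{\calb \tfp \calc}$ with $\zeta_1(\bfc) = \bfm - \bfv$ and $\zeta_2(\bfc) = \bfn - \bfv'$ (the marginal totals match by $\bfv_1 - \bfv_2 = \gamma_1(\bff_+) - \gamma_2(\bfg_+)$), the elements $\bfw := \bfh_+ + \bfc$ and $\bfw' := \bfh_- + \bfc$ lie in the fiber, project under $\gamma$ to $\bfp$ and $\bfp'$, and differ by $\bfh \in \Glue(\calf,\calg)$.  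The hard part here is simultaneously arranging $\bfv \leq \bfm$ and $\bfv' \leq \bfn$ so that the residual $\bfc$ exists with the required marginals; both reductions are instances of integer transportation with consistent marginals, which I would invoke as a standard fact.
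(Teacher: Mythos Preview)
Your proposal is correct and follows essentially the same architecture as the paper's proof: establish the vertex equality first, then the edge equality, with the forward inclusions immediate from projecting and the reverse edge inclusion requiring an explicit construction of a glue move together with a compatible ``residual'' element of the fiber.

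The one genuine difference is packaging. Where you phrase each existence step (the vertex preimage, the choice of $\bfv \leq \bfm$ with $\gamma_1(\bfv)=\bfv_2$, and the residual element $\bfc$) as an instance of integer transportation with consistent marginals, the paper carries out the identical constructions by hand in block-tableau notation: it writes $\bfu_1 - \bfu_2$ and $\bfv_1 - \bfv_2$ with blocks $I_1, I_2, I_3, I_2^*, I_3^*$, observes $I_2 \subseteq I_3^*$ and $I_2^* \subseteq I_3$ as multisets, and then selects the analogues of your $\bfv, \bfv'$ by picking sub-multisets $N$ from $[I_3\; J_3]$ and $M$ from $[I_3^*\; K_3]$, with the leftover rows $[I\; J\; K]$ playing the role of your~$\bfc$. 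Your transportation framing is cleaner and makes the feasibility checks (namely $\gamma_1(\bfm) \geq \bfv_2$ and $\gamma_1(\bfm-\bfv)=\gamma_2(\bfn-\bfv')$) more transparent as inequalities in $\nn^r$; the paper's tableau argument is more explicit and self-contained but obscures exactly these verifications inside multiset containments. Either way the content is the same.
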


\begin{proof}
We must show:
\begin{enumerate}
\item $V(\gamma( (\calb \tfp \calc)^{-1}[ (\bfb, \bfc)]_{{\GGlue}(\calf,\calg)})) \quad = \quad V(\gamma_1( \calb^{-1}[\bfb]_\calf) )
  \cap V(\gamma_2( \calc^{-1}[\bfc]_\calg)),$
\item $E(\gamma( (\calb \tfp \calc)^{-1}[ (\bfb, \bfc)]_{{\GGlue}(\calf,\calg)})) \quad = \quad E( \gamma_1( \calb^{-1}[\bfb]_\calf) )
  \cap E(\gamma_2( \calc^{-1}[\bfc]_\calg))$.
\end{enumerate}
In both part (1) and (2) the containment ``$ \subseteq$'' is straightforward, by
projecting.  Indeed, if $\bfu \in (\calb \tfp \calc)^{-1}[(\bfb, \bfc)]$, then
applying the canonical map $\pi_{zx} : \zz^{\calb \tfp \calc} \rightarrow
\zz^{\calb}$ gives $\pi_{zx}(\bfu) \in \calb^{-1}[\bfb]$ and $\gamma(\bfu) =
\gamma_{1}(\pi_{zx}(\bfu))$.  Similarly, $\gamma(\bfu) =
\gamma_{2}(\pi_{zy}(\bfu))$.  Furthermore, if $\bfu$ and $\bfu'$ are connected by
an edge corresponding to the binomial
 $\glue (x^{\bfv}f, y^{\bfv'}g) \in \glue(\calf,
\calg)$ then $\pi_{zx}(\bfu)$ and $\pi_{zx}(\bfu')$ are connected by~$\bff$, and
$\pi_{zy}(\bfu)$ and $\pi_{zy}(\bfu')$ are connected by~$\bfg$, where
 $f = x^{\bff^+} - x^{\bff^-}$ and $g = y^{\bfg^+} - y^{\bfg^-}$.

\paragraph{\bf Proof of part (1).}
We must show that if $\bfd$ is in both $\gamma_1( \calb^{-1}[\bfb]_\calf)$ and
$\gamma_2( \calc^{-1}[\bfc]_\calg)$ then $\bfd \in \gamma( (\calb \tfp
\calc)^{-1}[ (\bfb, \bfc)]_{{\GGlue}(\calf,\calg)}])$.  By assumption there are
$\bfu_1 \in \calb^{-1}[\bfb]$ and $\bfu_2 \in \calc^{-1}[\bfc]$ such that
$\gamma_1(\bfu_1) = \gamma_2(\bfu_2) = \bfd$.  Since $\pi_1(\bfb) = \pi_2(\bfc)$
and $\gamma_1(\bfu_1) = \gamma_2(\bfu_2)$ the corresponding monomials $x^{\bfu_1}$
and $y^{\bfu_2}$ have the same $\tilde{\cala}$ degree.  Since $\tilde{\cala}$ is
linearly independent, the monomial $x^{\bfu_1} y^{\bfu_2} \in \kk[x] \otimes_\kk
\kk[y]$ is in the image of $\phi_{I_{\tilde{\calb}}, J_{\tilde{\calc}}}$.  Let
$z^\bfu$ be a monomial such that $(\tilde{\calb} \tfpo \tilde{\calc})\bfu = (\bfb,
\bfc, \bfd)$ and hence $(\calb \tfp \calc)\bfu = (\bfb, \bfc)$.  But this implies
$\bfd \in \gamma( (\calb \tfp \calc)^{-1}[ (\bfb, \bfc)]_{{\GGlue}(\calf,\calg)}])$.

\paragraph{\bf Proof of part (2).}
Suppose that $\bfd$ and $\bfe$ are both in $\gamma_1( \calb^{-1}[\bfb]_\calf) $
and $\gamma_2( \calc^{-1}[\bfc]_\calg)$, and they are connected by an edge.  We
must show that $\bfd$ and $\bfe$ are connected by an edge in $\gamma( (\calb \tfp
\calc)^{-1}[ (\bfb, \bfc)]_{{\GGlue}(\calf,\calg)}])$.  To do this, we must show
that there are $\bfw_1$ and $\bfw_2 \in (\calb \tfp \calc)^{-1}[(\bfb, \bfc)]$,
with $\gamma(\bfw_{1}) = \bfd$ and $\gamma(\bfw_{2}) = \bfe$ such that $\bfw_1 -
\bfw_2 \in {\GGlue}(\calf, \calg)$.

Since there is an edge in $\gamma_1( \calb^{-1}[\bfb]_\calf)$ between $\bfd$ and
$\bfe$, there exist $\bfu_1$ and $\bfu_2$ in $\calb^{-1}[\bfb]$ such that
$\gamma_1(\bfu_1) = \bfd$, $\gamma_1(\bfu_2) = \bfe$ and $\bfu_1 - \bfu_2 = \bff \in
\calf$.  Similarly, there are $\bfv_1$ and $\bfv_2 \in \calc^{-1}[\bfc]$ such that
$\gamma_2(\bfv_1) = \bfd$, $\gamma_2(\bfv_2) = \bfe$ and $\bfv_1 - \bfv_2 = \bfg
\in \calg$.  By part (1), there exists $\bfw_1 \in(\calb \tfp \calc)^{-1}[(\bfb,
\bfc)] $ which projects to $(\bfu_1, \bfv_1)$ and $\bfw_2 \in (\calb \tfp
\calc)^{-1}[(\bfb, \bfc)]$ which projects to $(\bfu_2, \bfv_2)$.  There are many
choices for $\bfw_1$ and $\bfw_2$.  We claim that we can choose them so that
$\bfw_1 - \bfw_2 \in {\GGlue}(\calf, \calg)$, which completes the proof.

To prove the claim, we explicitly construct these elements.  This requires an
understanding of the precise forms that $\bfu_{1}, \bfu_{2}, \bfv_{1}$, and
$\bfv_{2}$ take.  Writing $\bfu_{1} - \bfu_{2}$ and $\bfv_{1}- \bfv_{2}$ as
tableaux in block form we have:
\begin{equation*}
\bfu_{1} - \bfu_{2} = 
\begin{bmatrix}
I_{1} & J_{1}  \\
\hline
I_{2} & J_{2}  \\
\hline
I_{3} & J_{3}  
\end{bmatrix} \quad - \quad
\begin{bmatrix}
I_{1}' & J_{1}'  \\
\hline
I_{2} & J_2' \\
\hline
I_{3} & J_{3}
\end{bmatrix} 
\end{equation*}
\begin{equation*}
\bfv_{1} - \bfv_{2} = 
\begin{bmatrix}
I_{1} & K_{1}  \\
\hline
I_{2}^{*} & K_{2}  \\
\hline
I_{3}^{*} & K_{3}  
\end{bmatrix} \quad - \quad
\begin{bmatrix}
I_{1}' & K_{1}'  \\
\hline
I_{2}^{*} & K_2' \\
\hline
I_{3}^{*} & K_{3}
\end{bmatrix}. 
\end{equation*}
Note that $I,J,K$ are multisets here, not ideals.
The first two blocks of rows in the tableaux for $\bfu_{1} - \bfu_{2}$ give the
support of this difference.  This corresponds to the binomial~$f$.  The last block
of rows corresponds to the part where the vectors agree, and hence is the same in
both $\bfu_{1}$ and $\bfu_{2}$.  Similarly, the first two blocks of rows in the
tableaux for $\bfv_{1} - \bfv_{2}$ give the support of this difference.  This
corresponds to the binomial~$g$.  The last block of rows corresponds to the part
where the vectors agree, and hence is the same in both $\bfv_{1}$ and $\bfv_{2}$.

The first block of rows in both $\bfu_{1} - \bfu_{2}$ and $\bfv_{1} - \bfv_{2}$,
have the same $I_{1}$ and $I_{1}'$ because these blocks correspond to the common
binomial $(w^{\bfs_1} - w^{\bfs_2})$ in $\phi_{xw}(f) = w^{\bfr_1}(w^{\bfs_1} -
w^{\bfs_2})$ and $\phi_{yw}(g) = w^{\bfr_2}(w^{\bfs_1} - w^{\bfs_2})$.  Note that
this corresponds to $\bfd - \bfe$.  This implies that in the second and third
blocks of rows of $\bfu_{1}$ and of $\bfu_{2}$ we have exactly the same multisets
of indices in the first column.  This explains why $I_{2}$ and $I_{3}$ appear in
both the $\bfu_{1}$ and the $\bfu_{2}$ tableaux.  A similar argument shows that
$I_{2}^{*}$ and $I_{3}^{*}$ should appear in both $\bfv_{1}$ and $\bfv_{2}$.
Finally, we must have that the multiset of indices appear in $I_{2}$ and $I_{3}$
together equals the multiset of indices that appear in $I_{2}^{*}$ and $I_{3}^{*}$
together.  By our usual assumption that $\gcd( w^{\bfr_1}, w^{\bfr_2}) = 1$, we
see that the multisets $I_{2}$ and $I_{2}^{*}$ are disjoint.  This implies that,
as multisets, $I_{2} \subseteq I_{3}^{*}$ and $I_{2}^{*} \subseteq I_{3}$ .

With all this information on the structure of the tableau, we can build our
element of ${\GGlue}(\calf, \calg)$.  Indeed, we construct this binomial by
constructing its tableau form, which is:
\begin{equation*}
h = 
\begin{bmatrix}
I_{1} & J_{1} & K_{1}  \\
\hline
I_{2} & J_{2} & M \\
\hline
I_{2}^{*} & N & K_{2}  
\end{bmatrix} \quad - \quad
\begin{bmatrix}
I_{1}' & J_{1}' & K_{1}'  \\
\hline
I_{2} & J_{2}' & M \\
\hline
I_{2}^{*}& N & K_{2}'
\end{bmatrix}. 
\end{equation*}
Here $M$ is chosen so that the rows of $[I_{2} \, \, \, M]$ are a multi-subset of
the rows of $[I_{3}^{*} \, \, \, K_{3}]$, and $N$ is chosen so that the rows of
$[I_{2}^{*} \, \, \, N]$ are a multi-subset of the rows of $[I_{3} \, \, \,
J_{3}]$.  By construction $h \in {\GGlue}(\calf, \calg)$ since the $x$ monomial
corresponding to $[I_{2}^{*} \, \, \, N]$ belongs to $L( w^{\bfr_{2}})$ and the
$y$ monomial corresponding to $[I_{2} \, \, \, M]$ belongs to $R(w^{\bfr_{1}})$.

We do not yet have $\bfw_{1}$ and $\bfw_{2}$, since there might be leftover
indices from the last blocks of rows of $\bfu_{1} - \bfu_{2}$ and $\bfu_{1} -
\bfu_{2}$.  Call these remaining rows: $[I \,\, \, J] - [I \, \, \, J]$ in the
first case, and $[I \,\, \, K] - [I \, \, \, K]$ in the second.  Note that we have
the same multiset of indices $I$ in both, since we have extracted $I_{2}$ and
$I_{2}^{*}$ from both the pair $I_{2}$ and $I_{3}$ and the pair $I_{2}^{*}$ and
$I_{3}^{*}$, which had the same multiset of indices.  This means, finally, that we
have $\bfw_{1}$ and $\bfw_{2}$ in tableau notation as:
\begin{equation*}
\begin{bmatrix}
I_{1} & J_{1} & K_{1}  \\
\hline
I_{2} & J_{2} & M \\
\hline
I_{2}^{*} & N & K_{2} \\
\hline
I & J & K  
\end{bmatrix} \quad - \quad
\begin{bmatrix}
I_{1}' & J_{1}' & K_{1}'  \\
\hline
I_{2} & J_{2}' & M \\
\hline
I_{2}^{*}& N & K_{2}'  \\
\hline
I & J & K
\end{bmatrix}.
\end{equation*}
Since $\pi_{zx}(\bfw_{1}) = \bfu_{1}$ and $\pi_{zy}(\bfw_{1}) = \bfv_{1}$, this
implies $\gamma(\bfw_{1}) = \bfd$.  Similarly, $\gamma(\bfw_{2}) = \bfe$.
Finally, by construction $\bfw_{1}$ and $\bfw_{2}$ are connected by the move $h$,
which is in ${\GGlue}(\calf, \calg)$.  This completes the proof since now
\begin{equation*}
  E(\gamma( \calb \tfp \calc^{-1}[ (\bfb, \bfc)]_{{\GGlue}(\calf,\calg)}]))
  \quad \supseteq \quad E( \gamma_1( \calb^{-1}[\bfb]_\calf) ) \cap E(\gamma_2(
  \calc^{-1}[\bfc]_\calg)). \qedhere
\end{equation*}
\end{proof}

The idea of the proof of Theorem~\ref{thm:cpp} is summarized by
Figure~\ref{fig:compatproj}.  We wish to show that the graph of each fiber is
connected.  To do so we decompose the lattice $\ker \calb \times_{\cala} \calc$
into two directions.  The first direction (vertical in the figure) corresponds to
the lattice of the associated codimension zero toric fiber product.  The subgraphs
of fiber elements constrained to lie in a translate of that lattice are connected
since we have a Markov basis for the associated zero toric fiber product.  The
remaining lattice directions (essentially horizontal in the figure) arise because
the product is not actually of codimension zero.  By projecting via $\gamma$ and
showing that the image graph is connected (using Lemma~\ref{lem:gluegraph}), we
deduce that the entire graph is connected.

\begin{figure}[ht]
  \centering
  \resizebox{6.3cm}{!}{\input{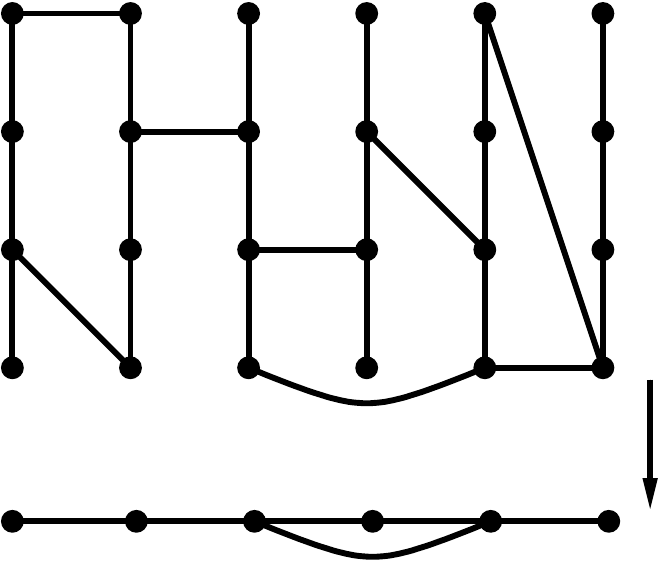_t}}
  \caption{Illustration of connected fibers.}
  \label{fig:compatproj}
\end{figure}

\begin{thm}\label{thm:cpp}
  Let $\calh \subset \ker \tilde{\calb}\tfpo \tilde{\calc}$ be a Markov basis for
  the associated codimension zero toric fiber product.  Let $\calf \subseteq \ker
  \calb$ and $\calg \subseteq \ker \calc$.  Then $\calh \cup {\GGlue}(\calf,
  \calg)$ is a Markov basis for $ \calb \tfp \calc$ if and only if $\calf$ and
  $\calg$ have the compatible projection property.
\end{thm}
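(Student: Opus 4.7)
The plan is to prove both directions by analyzing the fiber graphs $(\calb \tfp \calc)^{-1}[(\bfb,\bfc)]_{\calh \cup \Glue(\calf,\calg)}$ through the projection $\gamma$. A crucial preliminary observation is that every element of $\calh$ lies in $\ker\tilde{\calb}\tfpo\tilde{\calc}$, so it is $\tilde{\cala}$-homogeneous, which means $\calh$-moves preserve the value of $\gamma$. By contrast, $\Glue(\calf,\calg)$-moves can change $\gamma$, and Lemma~\ref{lem:gluegraph} precisely identifies the induced graph on $\gamma$-images with $\gamma_1(\calb^{-1}[\bfb]_\calf) \cap \gamma_2(\calc^{-1}[\bfc]_\calg)$. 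A second routine but essential identity is that for any $\bfw \in (\calb \tfp \calc)^{-1}[(\bfb,\bfc)]$ one has $(\tilde{\calb}\tfpo\tilde{\calc})\bfw = (\bfb, \bfc, \gamma(\bfw))$, so vectors with the same $\gamma$-value lie in a common fiber of the associated codimension zero product.

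For sufficiency, assume the compatible projection property holds and fix $\bfw, \bfw' \in (\calb \tfp \calc)^{-1}[(\bfb,\bfc)]$. First I would invoke Lemma~\ref{lem:gluegraph} together with the compatible projection property to obtain a path $\gamma(\bfw) = \bfd_0, \bfd_1, \ldots, \bfd_m = \gamma(\bfw')$ in the projection graph $\gamma((\calb \tfp \calc)^{-1}[(\bfb,\bfc)]_{\Glue(\calf,\calg)})$. By definition of the projection graph, each edge $\bfd_{t-1}\bfd_t$ arises from a $\Glue$-move connecting some $\bfw_t^{-}, \bfw_t^{+}$ in the fiber with $\gamma(\bfw_t^{-}) = \bfd_{t-1}$ and $\gamma(\bfw_t^{+}) = \bfd_t$. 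I would assemble a path in the fiber graph by interlacing these $\Glue$-edges with $\calh$-paths: within the $\gamma$-fiber over $\bfd_{t-1}$, use $\calh$-moves to travel from the previous $\bfw_{t-1}^{+}$ (or from $\bfw$ initially) to $\bfw_t^{-}$, and finish with a final $\calh$-path from $\bfw_m^{+}$ to $\bfw'$. This is legitimate precisely because each $\gamma$-fiber intersected with $(\calb\tfp\calc)^{-1}[(\bfb,\bfc)]$ is a single fiber of $\tilde{\calb}\tfpo\tilde{\calc}$, on which $\calh$ is a Markov basis by hypothesis.

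For necessity, I would argue the contrapositive. If the compatible projection property fails at some $\bfb, \bfc$ with $\pi_1(\bfb) = \pi_2(\bfc)$, then by Lemma~\ref{lem:gluegraph} the projection graph $\gamma((\calb \tfp \calc)^{-1}[(\bfb,\bfc)]_{\Glue(\calf,\calg)})$ is disconnected. Since $\calh$-moves preserve $\gamma$, adjoining $\calh$ to the edge set cannot produce any edge between distinct $\gamma$-classes in the projection. Hence the fiber graph with respect to $\calh \cup \Glue(\calf,\calg)$ maps to a disconnected projection graph and is itself disconnected, contradicting the Markov basis property.

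The main obstacle I expect is the lifting step in the sufficiency direction: a single edge in the projection graph only guarantees the existence of \emph{some} pair of fiber elements realizing it, not an edge connecting the specific $\bfw$ in hand. The resolution is the interlacing argument above, whose correctness rests on the two-stage decomposition (first move across $\gamma$-classes by $\Glue$, then within a $\gamma$-class by $\calh$), matching the informal description accompanying Figure~\ref{fig:compatproj}. The rest of the argument is a direct unpacking of the definitions combined with Lemma~\ref{lem:gluegraph}.
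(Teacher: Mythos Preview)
Your proposal is correct and follows essentially the same approach as the paper. The only cosmetic difference is that where the paper invokes the abstract principle ``if the image of a map of graphs is connected and each fiber is connected, then the graph itself is connected,'' you spell out the interlacing argument explicitly; likewise your contrapositive for necessity is the same as the paper's direct argument, just reversed.
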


\begin{proof}
  We must show that for any $(\bfb, \bfc) \in \nn( \calb \tfp \calc)$ the graph
  $(\calb \tfp \calc)^{-1}[ (\bfb, \bfc)]_{ \calh \cup {\GGlue}(\calf, \calg)}$
  is connected.  For each $\bfd \in \nn \cald$ consider the subgraph of $ (\calb
  \tfp \calc)^{-1}[ (\bfb, \bfc)]_{ \calh \cup {\GGlue}(\calf, \calg)}$ whose
  vertices consist of all $(\bfu, \bfv) \in (\calb \tfp \calc)^{-1}[ (\bfb,
  \bfc)]$ such that $\gamma_1(\bfu) = \gamma_2(\bfv) = \bfd$.  This is precisely
  the set $ \tilde{\calb} \tfpo \tilde{\calc}^{-1}[ (\bfb, \bfc, \bfd)]$.  This
  subgraph is connected since $\calh$ is a Markov basis for $ \tilde{\calb} \tfpo
  \tilde{\calc}$.  The graph $\gamma( (\calb \tfp \calc)^{-1}[ (\bfb, \bfc)]_{
    \calh \cup \GGlue(\calf, \calg)})$ equals the graph $\gamma( (\calb \tfp
  \calc)^{-1}[ (\bfb, \bfc)]_{\GGlue(\calf, \calg)})$ because $\calh$ is contained
  in the kernel of the projection~$\gamma$.  This graph is connected since $\calf$
  and $\calg$ have the compatible projection property and by
  Lemma~\ref{lem:gluegraph}.  But if the image of a map of graphs is connected and
  each fiber is connected, then the graph itself is connected, which completes the
  proof of the if direction.
  
  Conversely, if every fiber is connected, the graph $(\calb \tfp \calc)^{-1}[
  (\bfb, \bfc)]_{ \calh \cup {\GGlue}(\calf, \calg)}$ is connected, so the graph
  $\gamma( (\calb \tfp \calc)^{-1}[ (\bfb, \bfc)]_{\GGlue(\calf, \calg)})$ is
  connected.  By Lemma~\ref{lem:gluegraph}, this equals $\gamma_1(
  \calb^{-1}[\bfb]_\calf) \cap \gamma_2( \calc^{-1}[\bfc]_\calg)$ so that $\calf$
  and $\calg$ have the compatible projection property.
\end{proof}

Theorem~\ref{thm:cpp} gives an explicit way to construct a Markov basis for $\calb
\tfp \calc$.  However, there remains a serious difficulty in finding sets $\calf
\subset \ker \calb$ and $\calg \subset \ker \calc$ which have the compatible
projection property.  In general, it is not true that $\calf$ and $\calg$ can
be arbitrary Markov bases of $\calb$ and $\calc$.


\subsection{Slow-varying Markov bases}

In the remainder of the section, we describe the \emph{slow-varying} condition
(generalizing \cite{Engstrom2008}) which, if the codimension is one, can be used
to show that a given pair of Markov bases satisfies the compatible projection
property.

\begin{defn}\label{def:slow-varying}
  Suppose that $\calb \tfp \calc$ is a codimension one toric fiber product.  Let
  $\bfh \in \zz^{r}$ be non-zero.  Let $\calf \subseteq \ker
  \calb$ and $\calg \subseteq \ker \calc$.  Then $\calf$ and $\calg$ are
  \emph{slow-varying} with respect to $\bfh$ if for all $\bff \in \calf$,
  $\gamma_1(\bff) = 0$, or $\pm \bfh$; and for all $\bfg \in \calg$,
  $\gamma_2(\bfg) = 0$ or $\pm \bfh$.
\end{defn}

\begin{prop}\label{prop:slowvaryingcheck}
  Let $\bfh$ generate $\ker \cala$.  If the maximum $1$-norm of any element in
  $\calf$ or $\calg$ is less than $2 \| \bfh\|_{1}$, then $\calf$ and $\calg$ are
  slow-varying with respect to $\bfh$.
\end{prop}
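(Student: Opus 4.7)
The plan is a short lattice computation that proceeds in two steps: first, identify $\gamma_1(\bff)$ and $\gamma_2(\bfg)$ as elements of the rank-one lattice $\ker \cala$; second, use a $1$-norm contraction to force the integer coefficient to be at most one in absolute value.

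For the first step, fix $\bff \in \calf \subseteq \ker \calb$. Applying $\pi_1$ to the relation $\calb \bff = 0$ and using $\pi_1(\bfb^i_j) = \bfa^i$, we obtain
\[
0 = \pi_1(\calb \bff) = \sum_{i,j} f^i_j \,\pi_1(\bfb^i_j) = \sum_i \Bigl(\sum_j f^i_j\Bigr)\bfa^i = \cala\,\gamma_1(\bff),
\]
so $\gamma_1(\bff) \in \ker \cala$. Since the codimension is one and $\bfh$ generates this rank-one lattice, we may write $\gamma_1(\bff) = c\,\bfh$ for a unique $c \in \zz$. The same observation applied to $\bfg \in \calg$ via $\pi_2$ gives $\gamma_2(\bfg) = c' \bfh$ for some $c' \in \zz$.

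For the second step, I would use the elementary inequality that $\gamma_1$ and $\gamma_2$ do not increase the $1$-norm. Indeed, for any integer vector $\bff = (f^i_j)$,
\[
\|\gamma_1(\bff)\|_1 = \sum_i \Bigl|\sum_j f^i_j\Bigr| \leq \sum_i \sum_j |f^i_j| = \|\bff\|_1,
\]
by the triangle inequality, and analogously for $\gamma_2$. Combining this with the hypothesis $\|\bff\|_1 < 2\|\bfh\|_1$ yields $|c|\,\|\bfh\|_1 = \|\gamma_1(\bff)\|_1 < 2\|\bfh\|_1$, hence $|c| \leq 1$. Therefore $\gamma_1(\bff) \in \{0, \pm\bfh\}$, and the same conclusion holds for every $\bfg \in \calg$.

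There is no real obstacle here; the only subtle point is recognizing that the codimension-one assumption is doing precisely the work of reducing $\ker \cala$ to a rank-one lattice with a single generator, which turns the $1$-norm bound into a sharp integrality constraint. Once that is in place, the proof is a two-line estimate.
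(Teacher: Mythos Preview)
Your proof is correct and follows exactly the same argument as the paper's own proof, just with the details of why $\gamma_1(\bff)\in\ker\cala$ and why $\|\gamma_1(\bff)\|_1\le\|\bff\|_1$ spelled out explicitly rather than asserted. The paper's version is simply a two-sentence compression of what you have written.
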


\begin{proof}
 Since $\gamma_{1}(\bff)$ must be a multiple of $\bfh$ and $\|\gamma_1(\bff) \|_1 \leq \| \bff \|_1$ , if
$\| \bff \|_{1} < 2 \| \bfh \|_{1}$ then $\gamma_{1}(\bff)$ is either
$0$ or $\pm \bfh$.  A similar statement holds for $\gamma_{2}(\bfg)$.
\end{proof}

\begin{thm}\label{thm:slowvarying}
  Suppose that $\calb \tfp \calc$ is a codimension one toric fiber product.  Let
  $\calh$ be a Markov basis for $\tilde{\calb} \tfpo \tilde{\calc}$.  Let $\calf$
  and $\calg$ be Markov bases for $\calb$ and $\calc$ that are slow-varying with
  respect to $\bfh \in \ker \cala$.  Then $\calh \cup \GGlue(\calf, \calg)$ is a
  Markov basis for $\calb \tfp \calc$.
\end{thm}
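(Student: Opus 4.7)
My plan is to reduce the statement to a check of the compatible projection property, then exploit the one-dimensional structure of $\ker \cala$.

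First, by Theorem~\ref{thm:cpp} it suffices to verify that $\calf$ and $\calg$ satisfy the compatible projection property. So I fix $\bfb \in \nn\calb$ and $\bfc \in \nn\calc$ with $\pi_1(\bfb) = \pi_2(\bfc)$ and aim to show that the intersection graph
\[
G_1 \cap G_2 \defas \gamma_1(\calb^{-1}[\bfb]_\calf) \cap \gamma_2(\calc^{-1}[\bfc]_\calg)
\]
is connected. The natural strategy is to describe $G_1$ and $G_2$ individually, then intersect.

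Second, I would make three structural observations. (a) Both vertex sets lie on a common discrete affine line: any $\bfd = \gamma_1(\bfu) \in V(G_1)$ satisfies $\cala \bfd = \pi_1(\calb\bfu) = \pi_1(\bfb) = \pi_2(\bfc)$, and symmetrically for $G_2$. Since $\ker \cala = \zz \bfh$ by the codimension one assumption, $V(G_1), V(G_2)$ sit inside the translate $L \defas \{\bfd \in \nn^r : \cala \bfd = \pi_1(\bfb)\}$, a subset of a single coset of $\zz \bfh$. (b) Under the slow-varying hypothesis, every edge of $G_1$ comes from $\bff \in \calf$ with $\gamma_1(\bff) \in \{0, \pm \bfh\}$, so the endpoints differ by $0$ or $\pm \bfh$; ditto for $G_2$. (c) Because $\calf$ is a Markov basis of $\calb$, the fiber graph $\calb^{-1}[\bfb]_\calf$ is connected, and its image $G_1$ under $\gamma_1$ is therefore connected; likewise $G_2$ is connected.

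Third, the main step is a combinatorial lemma: a connected graph whose vertices lie on a line parametrized by $\zz$ and whose edges connect only vertices differing by $0$ or $\pm 1$ must have a contiguous vertex set on that line, with an edge between every pair of consecutive vertices. (If not, some vertex on the line between two graph vertices is missing or some consecutive pair has no edge, and any walk across that gap would require an edge of length $>1$, contradicting connectedness.) Applying this to $G_1$ and $G_2$ shows each has contiguous vertex set on $L$, with all neighboring edges present. Then $V(G_1) \cap V(G_2)$ is again a (possibly empty) contiguous interval, and between consecutive vertices of this interval an edge is present in both $G_1$ and $G_2$, hence in $G_1 \cap G_2$. This forces $G_1 \cap G_2$ to be connected.

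The main obstacle is the combinatorial lemma in step three — everything else is bookkeeping. Its use crucially relies on the codimension-one hypothesis: in higher codimension, $\ker \cala$ is no longer a line, vertex sets are not confined to a path, and connected subgraphs of a grid need not have contiguous vertex sets, so the same argument fails. This is the structural reason slow-varying is only defined (and only suffices) in codimension one.
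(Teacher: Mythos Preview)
Your proposal is correct and follows essentially the same route as the paper's proof: reduce to the compatible projection property via Theorem~\ref{thm:cpp}, observe that the projection graphs $G_1,G_2$ are connected subgraphs of a one-dimensional lattice with only nearest-neighbor edges (by slow-varying), hence are intervals, and intersect them. Your write-up is somewhat more careful than the paper's---you make explicit that the vertex sets lie in a common coset of $\zz\bfh$ rather than in $\zz\bfh$ itself, and you spell out the interval lemma---but the argument is the same.
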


\begin{proof}
  Since the toric fiber product is codimension one, the vertex sets of  the graphs 
  $\gamma_1(\calb^{-1}[\bfb]_\calf)$ and $\gamma_2( \calc^{-1}[\bfc]_\calg)$ are
  subsets of the lattice
  $\zz \bfh$.  Since $\calf$ and $\calg$ are Markov bases, these graphs are
  connected.  By the slow-varying condition, the edges connect two points whose
  difference is $\pm \bfh$.  Hence the graphs $\gamma_1( \calb^{-1}[\bfb]_\calf)$
  and $\gamma_2( \calc^{-1}[\bfc]_\calg)$ are intervals of ordered points.  The
  intersection of two such graphs is another graph of the same type, and is also
  connected.  Thus $\calf$ and $\calg$ have the compatible projection property and
  Theorem~\ref{thm:cpp} then implies that $\calh \cup {\GGlue}(\calf, \calg)$ is
  a Markov basis for $\calb \tfp \calc$.
\end{proof}

In general, we cannot expect to simply use minimal Markov bases $\calf$ and
$\calg$ of $\calb$ and $\calc$ to construct a Markov basis of $\calb \tfp \calc$.
Indeed, even in the codimension one case when those Markov bases are not
slow-varying, we might have the situation that every $f \in \calf$
satisfies $\gamma_{1}(f) = 0, \pm \bfh, \pm 2 \bfh$ and 
every $g \in \calg$ satisfies
$\gamma_{2}(g) = 0, \pm \bfh, \pm 2 \bfh$, but there are elements $h$ in the
Markov basis for $\calb \tfp \calc$, with $\gamma(h) = m \bfh$ for $m$ large.  The
problem is illustrated by Figure~\ref{fig:noslow}, which would require
augmenting the sets $\calf$ and $\calg$ with some elements
that had $\gamma_{1}(f) = \gamma_{2}(g) = \pm3 \bfh$ to guarantee
the compatible projections property.
\begin{figure}[ht]
  \centering
  \includegraphics[width=10cm]{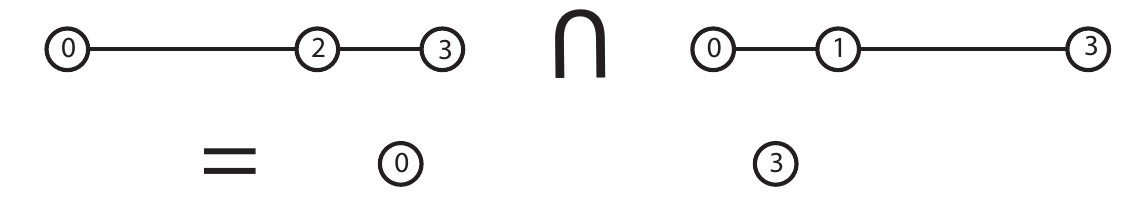}
  \caption{A codimension one toric fiber product that is not slow-varying.}
  \label{fig:noslow}
\end{figure}


\section{Application: Markov bases for hierarchical models}\label{sec:markov}

Let $\Gamma$ be a simplicial complex with vertex set $V$, and let $d \in
\zz^{V}_{\geq 2}$ a vector of integers.  These data define a hierarchical model as
in Section~\ref{sec:2:hierarchical}, and hence a toric ideal~$I_{\Gamma,d}$.  For
any homogeneous ideal $I$, let $\mu(I)$ denote the largest degree of a minimal
generator of $I$, which is an invariant of the ideal.  This is a coarse measure of
the complexity of the ideal~$I$.  If $\Gamma$ is a graph and $d_{v} = 2$ for all
$v \in V$, $\mu(I_{\Gamma,d})$, is an invariant of $\Gamma$ dubbed the
\emph{Markov width} in~\cite{Develin2003}.  We calculate $\mu(\Gamma,d) :=
\mu(I_{\Gamma,d})$ for certain simplicial complexes $\Gamma$ and vectors~$d$.  The
results of Section~\ref{sec:generators} are also useful to explicitly construct
Markov bases of these hierarchical models.

The ideal $I_{\Gamma,d}$ is the toric ideal of a matrix $A_{\Gamma,d}$ with
columns indexed by elements $i \in \D_{V}$.  Each column is given by the formula
$$
(A_{\Gamma,d})_{i} =  \bigoplus_{F \in {\rm facet}(\Gamma)} e^{F}_{i_{F}}  \in \bigoplus_{F \in {\rm facet}(\Gamma)}  \zz^{\D_{F}}
$$
where $\{e^{F}_{j_{F}} : j_{F} \in \D_{F} \}$ is the standard basis for
$\zz^{\D_{F}}$.  For $S \subset V$, let $\Gamma_{S}$ denote the induced subcomplex
on $S$ (that is, $\Gamma_{S} = \{ F \in \Gamma : F \subseteq S \}$).  The matrix
$A_{\Gamma_{S}, d_{S}}$ induces a grading on $I_{\Gamma, d}$ by $\deg(p_{i}) =
(A_{\Gamma_{S}, d_{S}})_{i_{S}}$.  This grading is the \emph{$S$-grading}.

\begin{prop}\label{prop:gammatfp}
  Let $\Gamma$ be a simplicial complex with $\Gamma = \Gamma_{1} \cup \Gamma_{2}$,
  where the vertex set of $\Gamma_{1}$ and $\Gamma_{2}$ are $V_{1}$ and $V_{2}$,
  respectively.  Let $S = V_{1} \cap V_{2}$ and suppose that $\Gamma_{1} \cap
  \Gamma_{2} = (\Gamma_{1})_{S} = (\Gamma_{2})_{S}$.  Then
\begin{equation*}
I_{\Gamma,d}  =  I_{\Gamma_1,d_{V_{1}}}  \times_{A_{\Gamma_{S},d_{S}}}  I_{\Gamma_2,d_{V_{2}}}.
\end{equation*}
\end{prop}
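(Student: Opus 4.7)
The plan is to exhibit both sides as toric ideals and compare the kernels of their defining integer matrices in $\zz^{\D_V}$. Index sets come first: the bijection $\D_V = \D_{V_1 \setminus S} \times \D_S \times \D_{V_2 \setminus S}$ identifies the ambient ring $\kk[p_i : i \in \D_V]$ of $I_{\Gamma, d}$ with the toric fiber product ambient ring $\kk[z^{i_S}_{jk}]$ via $p_{(j, i_S, k)} \leftrightarrow z^{i_S}_{jk}$. Similarly set $x^{i_S}_j = p_{(j, i_S)}$ in $\kk[p_\alpha : \alpha \in \D_{V_1}]$ and $y^{i_S}_k = p_{(i_S, k)}$ in $\kk[p_\beta : \beta \in \D_{V_2}]$, so that the grading $\cala$ with vectors $(A_{\Gamma_S, d_S})_{i_S}$ indexed by $i_S \in \D_S$ places $x^{i_S}_j$ and $y^{i_S}_k$ in the same degree, as the toric fiber product setup demands.

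Before comparing kernels, I would check that $I_{\Gamma_1, d_{V_1}}$ and $I_{\Gamma_2, d_{V_2}}$ are $\cala$-homogeneous so that the toric fiber product is defined at all. A binomial $p^\bfu - p^\bfv$ lies in $I_{\Gamma_1, d_{V_1}}$ precisely when $\sum_{\alpha: \alpha_F = j} u_\alpha = \sum_{\alpha: \alpha_F = j} v_\alpha$ for every $F \in {\rm facet}(\Gamma_1)$ and every $j \in \D_F$. Every facet $G$ of $\Gamma_S = (\Gamma_1)_S$ is contained in some facet $F$ of $\Gamma_1$, and the fiber sum at $G$ is obtained from the fiber sum at $F$ by summing out the coordinates of $F \setminus G$. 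Hence the fiber sums at $G$ agree as well, which is exactly $\cala$-homogeneity; the identical argument works for $I_{\Gamma_2, d_{V_2}}$.

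Since both $I_{\Gamma, d}$ and $I_{\Gamma_1, d_{V_1}} \tfp I_{\Gamma_2, d_{V_2}}$ are toric, the equality reduces to checking that their associated integer matrices have equal kernels in $\zz^{\D_V}$. A vector $\bfu$ lies in $\ker A_{\Gamma, d}$ iff $\sum_{i: i_F = j} u_i = 0$ for every $F \in {\rm facet}(\Gamma)$ and $j \in \D_F$. From the vector configuration description of the toric fiber product in Section~\ref{sec:vector}, $\bfu$ lies in the kernel of $A_{\Gamma_1, d_{V_1}} \times_{A_{\Gamma_S, d_S}} A_{\Gamma_2, d_{V_2}}$ iff this same fiber-sum condition holds for every $F \in {\rm facet}(\Gamma_1) \cup {\rm facet}(\Gamma_2)$.

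The proof closes with a monotonicity observation: if $F' \subseteq F$ are faces, then the $F'$-fiber sum is a coarsening of the $F$-fiber sum, so vanishing at $F$ implies vanishing at $F'$. Consequently, ``vanishing at every facet of a simplicial complex'' is equivalent to ``vanishing at every face of that complex.'' Because the hypothesis guarantees that both ${\rm facet}(\Gamma)$ and ${\rm facet}(\Gamma_1) \cup {\rm facet}(\Gamma_2)$ have downward closure $\Gamma_1 \cup \Gamma_2 = \Gamma$, the two kernel conditions reduce to the identical statement that $\sum_{i: i_F = j} u_i = 0$ for every face $F \in \Gamma$ and $j \in \D_F$, and the kernels coincide. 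The main subtlety I expect is purely bookkeeping: a facet of $\Gamma_i$ need not remain a facet of $\Gamma$ if it lies in $S$ and gets absorbed by a larger facet from the other complex, but the marginalization-via-downward-closure argument sidesteps any such case analysis.
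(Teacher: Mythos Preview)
Your proof is correct and follows essentially the same approach as the paper: both reduce the equality of toric ideals to a comparison of the defining vector configurations (equivalently, their integer kernels), observing that the fiber-product configuration $A_{\Gamma_1,d_{V_1}} \times_{A_{\Gamma_S,d_S}} A_{\Gamma_2,d_{V_2}}$ and $A_{\Gamma,d}$ differ only by redundant rows. Your downward-closure argument is in fact slightly more careful than the paper's, which removes only duplicate facet blocks and tacitly assumes ${\rm facet}(\Gamma_1) \cup {\rm facet}(\Gamma_2) = {\rm facet}(\Gamma)$; your treatment cleanly handles the case you flag at the end, where a facet of $\Gamma_i$ lying in $S$ is absorbed by a larger facet from the other side.
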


\begin{proof}
  Since all the ideals are toric, it suffices to show that the fiber product of
  the vector configurations $A_{\Gamma_{1},d_{V_{1}}}, A_{\Gamma_{2},d_{V_{2}}}$
  equals~$A_{\Gamma,d}$.
  For $i \in \D_{V}$ let $\bfb_{i_{V_{1}}}$ be the column of $A_{\Gamma_{1},
    d_{V_{1}}}$ indexed by $i_{V_{1}}$.  Similarly, define $\bfc_{i_{V_{2}}}$, and
  $\bfa_{i_{S}}$ as the appropriate columns of $A_{\Gamma_{2}, d_{V_{2}}}$ and
  $A_{\Gamma_{S}, d_{S}}$, respectively.  For $l=1,2$, let $\pi_l :
  \rr^{\D_{V_{l}}} \rightarrow \rr^{\D_{S}}$ be the linear projections induced by
  the grading that gives $\deg p_{i_{V_{1}}} = \deg p_{i_{V_{2}}} = \bfa_{i_{S}}$.
  The toric fiber product of vector configurations is
$$
A_{\Gamma_{1}, d_{V_{1}}} \times_{A_{\Gamma_{S}, d_{S}}} A_{\Gamma_{2}, d_{V_{2}}}  =
\{  (\bfb_{j} , \bfc_{k} :  j \in d_{V_{1}}, k \in d_{V_{2}}, j_{S} = k_{S} \}.
$$
This means that columns of the toric fiber product vector configuration have the form 
\[
\bigg( \bigoplus_{F \in {\rm facet}(\Gamma_{1})} \, \,  e^{F}_{i_{F}} \bigg) 
\oplus 
\bigg( \bigoplus_{F \in {\rm facet}(\Gamma_{2})} \, \,  e^{F}_{i_{F}}\bigg).
\]
If a facet $F$ appears in both $\Gamma_{1}$ and $\Gamma_{2}$, we can delete one of
the corresponding collections of rows of $A_{\Gamma_{1}, d_{V_{1}}}
\times_{A_{\Gamma_{S}, d_{S}}} A_{\Gamma_{2}, d_{V_{2}}} $, without changing the
kernel of the matrix, and hence the toric ideal.  After eliminating repeats, we
precisely have the matrix of $A_{\Gamma,d}$. 
\end{proof}

In~\cite{Hosten2002}, the codimension of a hierarchical model ($\Gamma$,$d$) is
given by the formula
\begin{equation}\label{eq:dimformula}
\sum_{F \notin \Gamma} \prod_{i \in F} (d_{i} -1).
\end{equation}
Hence, the toric fiber product from Proposition~\ref{prop:gammatfp} is a
codimension zero toric fiber product if and only if $\Gamma_{S} = 2^{S}$, and is a
codimension one toric fiber product if and only if $\Gamma_{S} = 2^{S} \setminus
\{S\}$ and $d_{s} = 2$ for all $s \in S$.  

\begin{prop}\label{prop:gammaassoc0}
Let $\Gamma$ be a simplicial complex with $\Gamma = \Gamma_{1} \cup \Gamma_{2}$, where the vertex set of $\Gamma_{1}$ and $\Gamma_{2}$ are $V_{1}$ and $V_{2}$, respectively, and  $\Gamma_{1} \cap \Gamma_{2}  =  (\Gamma_{1})_{S} = (\Gamma_{2})_{S}$.  Let $\tilde{\Gamma} = \Gamma \cup 2^{[S]}$, and similarly $\tilde{\Gamma}_{1} = \Gamma_{1} \cup 2^{[S]}$ and $\tilde{\Gamma}_{2} = \Gamma_{2} \cup 2^{[S]}$.  Then
$$
\tilde{I}_{\Gamma,d} = I_{\tilde{\Gamma},d} =  I_{\tilde{\Gamma}_{1}, d_{V_{1}}} \times_{A_{2^{[S]},d_{S}}} I_{\tilde{\Gamma}_{2}, d_{V_{2}}}.
$$
\end{prop}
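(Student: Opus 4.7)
The statement naturally splits into two claims: $\tilde{I}_{\Gamma,d} = I_{\tilde{\Gamma},d}$ and the toric fiber product identity $I_{\tilde{\Gamma},d} = I_{\tilde{\Gamma}_1,d_{V_1}} \times_{A_{2^{[S]},d_S}} I_{\tilde{\Gamma}_2,d_{V_2}}$. The plan is to deduce the second from Proposition~\ref{prop:gammatfp} and the first from the construction of the associated codimension zero toric fiber product.

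For the toric fiber product identity, the plan is to apply Proposition~\ref{prop:gammatfp} to the decomposition $\tilde{\Gamma} = \tilde{\Gamma}_1 \cup \tilde{\Gamma}_2$. Three hypotheses need to be checked: first, $\tilde{\Gamma}_1 \cup \tilde{\Gamma}_2 = \Gamma_1 \cup \Gamma_2 \cup 2^{[S]} = \tilde{\Gamma}$; second, $\tilde{\Gamma}_1 \cap \tilde{\Gamma}_2 = 2^{[S]}$, where $\supseteq$ is immediate and $\subseteq$ uses that any face in the intersection that is not in $2^{[S]}$ lies in $\Gamma_1 \cap \Gamma_2 = (\Gamma_1)_S \subseteq 2^{[S]}$, a contradiction; third, $(\tilde{\Gamma}_l)_S = 2^{[S]}$ for $l=1,2$, since any face of $\tilde{\Gamma}_l$ contained in $S$ lies in $(\Gamma_l)_S \cup 2^{[S]} = 2^{[S]}$. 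Once all three are verified, Proposition~\ref{prop:gammatfp} applied with common grading $A_{2^{[S]},d_S}$ (the induced $S$-grading for $\tilde{\Gamma}_1$ and $\tilde{\Gamma}_2$, which coincide since $(\tilde{\Gamma}_l)_S = 2^{[S]}$) yields the equation.

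For the first claim, the plan is to identify $\tilde{I}_{\Gamma,d}$ in concrete terms. By Definition~\ref{def:asscodimzero} and the text preceding it, $\tilde{I}_{\Gamma,d}$ is the subideal of $I_{\Gamma,d}$ generated by $\tilde{\cala}$-homogeneous elements, where $\tilde{\cala}$ refines $\cala = A_{\Gamma_S,d_S}$ to a linearly independent configuration of standard basis vectors indexed by $\D_S$. This refinement coincides exactly with $A_{2^{[S]},d_S}$, whose columns are the basis vectors $e_{i_S}$ for $i_S \in \D_S$. In the toric setting, a binomial $p^\bfu - p^\bfv \in I_{\Gamma,d}$ is $\tilde{\cala}$-homogeneous precisely when $A_{2^{[S]},d_S}\bfu = A_{2^{[S]},d_S}\bfv$, i.e., when the $i_S$-counts of $\bfu$ and $\bfv$ agree for every $i_S \in \D_S$. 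But this is exactly the additional constraint imposed by adjoining $S$ as a facet: the defining homomorphism $\phi_{\tilde{\Gamma},d}$ contains the factor $a^S_{i_S}$, whose kernel requires exactly these count conditions. Hence the $\tilde{\cala}$-homogeneous elements of $I_{\Gamma,d}$ coincide with $I_{\tilde{\Gamma},d}$.

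The main obstacle is bookkeeping: correctly identifying the abstract refined grading $\tilde{\cala}$ from Section~\ref{sec:generators} with the concrete vector configuration $A_{2^{[S]},d_S}$, and verifying that passage from $\Gamma$ to $\tilde{\Gamma} = \Gamma \cup 2^{[S]}$ adjoins precisely the facet whose parametrization factor enforces the $\tilde{\cala}$-homogeneity constraint. Once this correspondence is set up, both claims reduce to already-cited results.
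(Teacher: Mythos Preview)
Your proposal is correct and follows essentially the same approach as the paper. The paper's proof also reduces the toric fiber product identity to Proposition~\ref{prop:gammatfp} (implicitly, via ``it suffices to show $\tilde{I}_{\Gamma,d} = I_{\tilde{\Gamma},d}$''), and establishes the first equality by observing that the construction of $\tilde{I}_{\Gamma,d}$ modifies the parametrization $\phi_{\Gamma,d}$ to $\tilde{\phi}(p_i) = \phi_{\Gamma,d}(p_i)\, q_{i_S}$, which after deleting the redundant parameters $a^F_{i_F}$ for $F \subseteq S$ is exactly $\phi_{\tilde{\Gamma},d}$; your argument via the $\tilde{\cala}$-homogeneity constraint on binomials is the dual (kernel-side) phrasing of the same observation.
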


\begin{proof}
  It suffices to show that for $I_{\Gamma, d}$, the construction of
  $\tilde{I}_{\Gamma,d}$ gives $I_{\tilde{\Gamma},d}$.  Since $I_{\Gamma, d}$ is
  the kernel of a ring homomorphism, the construction of $$\tilde {I}_{\Gamma,d} =
  (I_{\gamma, d}(P) + \langle p_{i} - P_{i} q_{i_{S}} : i \in d \rangle) \cap
  \kk[p]$$ simply modifies that parametrization by taking $\tilde{\phi}(p_{i}) =
  \phi(p_{i}) q_{i_{S}}$.  Thus, we have that $\tilde{I}_{\Gamma,d} = \ker
  \tilde{\phi}$ where
  \begin{equation*}
    \tilde{\phi}(p_{i})  =  q_{i_{S}}  \prod_{F \in {\rm facet}(\Gamma)}  a^{F}_{i_{F}}.
  \end{equation*}
  We can delete all the $a^{F}_{i_{F}}$ parameters when $F \subseteq S$, since
  this does not change the kernel of the homomorphism.  But then, this is
  precisely the parameterization associated with~$\tilde{\Gamma}$.
\end{proof}


\subsection{Small examples}
In this section we restrict to the binary case where $d_{v} = 2 $ for all $v \in
V$.  To this end, let $\bf2$ be the vector whose every coordinate is a~$2$.  We
illustrate the constructions Quad, Lift, and Glue for small hierarchical models.
Tableaux for binomials in hierarchical models have one column for each $i\in V$
(and as always one row per variable appearing in a monomial). For example, we
represent the binomial $p_{111}p_{122}p_{212}p_{221} - p_{112} p_{121} p_{211}
p_{222}$ as the tableau:
$$
\begin{bmatrix}
1 & 1 & 1 \\
1 & 2 & 2 \\
2 & 1 & 2 \\
2 & 2 & 1
\end{bmatrix}
- 
\begin{bmatrix}
1 & 1 & 2 \\
1 & 2 & 1 \\
2 & 1 & 1 \\
2 & 2 & 2
\end{bmatrix}.
$$

\begin{lemma}\label{lem:cones}$ $\\[-3ex]
\begin{enumerate}
\item Let $S_{V} = 2^{V} \setminus \{V\}$, be the boundary of a $(\#V
  -1)$-dimensional simplex.  Then 
  $I_{S_{V}, {\bf 2}}$ is generated by a single binomial:
\[
\prod_{i  \in \D_{V}: \| i \|_{1} {\rm even}} p_{i} - \prod_{i \in \D_{V}: \| i \|_{1} {\rm odd}} p_{i}. 
\]
\item Let $\Gamma$ be a simplicial complex on $V$, let
$${\rm cone}_{v}(\Gamma)  =  \Gamma \cup  \{F \cup \{v\} :  F \in \Gamma \}$$
be the cone over $\Gamma$ with apex $v$, and
let $\calf$ be a (minimal) generating set of~$I_{\Gamma,d}$.
Then 
$I_{{\rm cone}_{v}(\Gamma), d_{V \cup \{v\}}}$ is (minimally) generated by
\[
{\rm cone}_{v}(\calf)  =  \left\{ p_{i_{1},j} \cdots p_{i_{n},j} -    p_{i_{1}',j} \cdots p_{i_{n}',j}  :   p_{i_{1}} \cdots p_{i_{n}} -    p_{i_{1}'} \cdots p_{i_{n}'} \in \calf \mbox{ and } j \in [d_{v}] \right\}.
\]
\end{enumerate}   
\end{lemma}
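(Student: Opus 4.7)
For part (1), I would apply the codimension formula (\ref{eq:dimformula}) to $S_V$: the only non-face of the full simplex is $V$ itself, so
\[
\mathrm{codim}\, I_{S_V, \mathbf{2}} \;=\; \prod_{v \in V}(d_v - 1) \;=\; 1,
\]
and hence $\ker A_{S_V, \mathbf{2}}$ has rank one. The ideal is therefore generated by a single primitive binomial. To verify that the proposed element lies in the kernel, I would compare, for each facet $V \setminus \{v\}$ and each $j \in \D_{V \setminus \{v\}}$, the number of $i \in \D_V$ with $i_{V \setminus \{v\}} = j$ and $\|i\|_1$ even, versus those with $\|i\|_1$ odd. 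Only $i_v \in \{1,2\}$ is free, and flipping this bit changes the parity of $\|i\|_1$, so each count equals one, showing $\phi_{S_V,\mathbf{2}}$ sends the two monomials to the same image. The coefficient vector of the proposed binomial takes entries in $\{-1, 0, 1\}$, hence is primitive, so it generates the rank-one lattice.

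For part (2), my plan is to exploit the fact that the cone construction separates the parametrization by the value of the $v$-coordinate. Writing $\phi_{{\rm cone}_v(\Gamma), d_{V \cup\{v\}}}$ explicitly, each variable $a^{F \cup \{v\}}_{k}$ occurs in the image of $p_i$ only when $k_v = i_v$. Consequently the source ring $\kk[p_i : i \in \D_{V \cup \{v\}}]$ decomposes as a tensor product $\bigotimes_{j \in [d_v]} \kk[p_i : i_v = j]$ of polynomial rings in disjoint variables, the target ring decomposes analogously by the value of $k_v$, and $\phi_{{\rm cone}_v(\Gamma), d_{V\cup\{v\}}}$ is the tensor product of its restrictions $\phi_j$ to the $j$-th factor. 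After relabelling $p_{(i,j)} \leftrightarrow p_i$ and $a^{F \cup \{v\}}_{(k,j)} \leftrightarrow a^F_k$, each $\phi_j$ is identified with $\phi_{\Gamma, d_V}$.

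From this identification it follows that $\ker \phi_{{\rm cone}_v(\Gamma), d_{V\cup\{v\}}} = \sum_{j \in [d_v]} \iota_j(I_{\Gamma, d_V})$, where $\iota_j$ denotes the relabelling $p_i \mapsto p_{(i,j)}$. If $\calf$ generates $I_{\Gamma, d_V}$, then $\bigcup_{j} \iota_j(\calf) = {\rm cone}_v(\calf)$ generates this sum, proving the first assertion. For the minimality claim I would use that the summands $\iota_j(I_{\Gamma, d_V})$ live in pairwise disjoint sets of variables: the fine $\nn^{[d_v]}$-grading induced by the $v$-coordinate places each $\iota_j(f)$ in its own degree class, so relations between generators from distinct slices are impossible, and within each slice minimality is inherited from $\calf$ via the isomorphism $\iota_j$. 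The main subtlety is making this transfer of minimality rigorous, which I would do by invoking the standard fact that for an ideal generated by polynomials in disjoint blocks of variables, a minimal generating set is the disjoint union of minimal generating sets of each block (equivalently, by comparing Hilbert series in the fine multigrading).
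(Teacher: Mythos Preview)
Your proposal is correct and follows essentially the same approach as the paper: for (1) you invoke the codimension formula~(\ref{eq:dimformula}) and then verify the binomial directly (the paper instead cites~\cite{Hosten2002} for the verification), and for (2) your tensor-product decomposition of $\phi_{{\rm cone}_v(\Gamma), d_{V\cup\{v\}}}$ is exactly the block-diagonal structure of $A_{{\rm cone}_v(\Gamma), d_{V\cup\{v\}}}$ that the paper appeals to. Your treatment of minimality in (2) is more explicit than the paper's, which simply records the block decomposition and leaves the consequence implicit.
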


\begin{proof}$\quad$ \newline \indent
  (1) According to the dimension formula (\ref{eq:dimformula}), $I_{S_{V}, {\bf
      2}}$ is generated by a single equation.  The proof of (\ref{eq:dimformula})
  in \cite{Hosten2002} shows that the given binomial generates the ideal.

  (2) This follows because one can rearrange the rows and columns of $A_{{\rm
      cone}_{v}(\Gamma), d_{V \cup \{v\}}}$ so that it is a block diagonal matrix
  with $d_{v}$ diagonal blocks with the matrix $A_{\Gamma, d}$ along the diagonal.
  This decomposition appears in~\cite{Hosten2007}.
\end{proof}

\begin{ex}[Binary four-cycle]
  Let $C$ be a four-cycle with edges $12, 13, 24, 34$.  The cycle decomposes as
  the union of two paths with edges $12, 13$ and $24, 34$.  With $V_{1} =
  \{1,2,3\}$ and $V_{2} = \{2,3,4\}$, $I_{C, {\bf 2}}$ is the toric
  fiber product of $I_{C_{V_{1}}, {\bf 2}}$ and $I_{C_{V_{2}}, {\bf 2}}$.
  According to Lemma~\ref{lem:cones}, the Markov basis of a
  path of length three with edges $12,13$, consists of the two elements
$$
\begin{bmatrix}
i & 1 & 1 \\
i & 2 & 2 \\
\end{bmatrix} -
\begin{bmatrix}
i & 1 & 2 \\
i & 2 & 1 \\
\end{bmatrix},  \quad  \quad  i \in \{1,2\}.
$$
Similarly, the Markov basis for the path with edges $24,34$ consists of the two elements
$$
\begin{bmatrix}
 1 & 1 & l \\
 2 & 2  & l \\
\end{bmatrix} -
\begin{bmatrix}
 1 & 2 & l \\
 2 & 1 & l\\
\end{bmatrix},  \quad  \quad  l \in \{1,2\}.
$$
These Markov bases are slow-varying with respect to the codimension one toric
fiber product obtained by the overlap complex, which is two isolated vertices $2,
3$.  The vector $\bfh$ for the complex of two isolated vertices is
$$
\begin{bmatrix}
 1 & 1 \\
 2 & 2  \\
\end{bmatrix} -
\begin{bmatrix}
 1 & 2  \\
 2 & 1 \\
\end{bmatrix}.
$$
 The glue operation on these Markov bases produces four moves:
$$
\begin{bmatrix}
i & 1 & 1 & l \\
i & 2 & 2 & l \\
\end{bmatrix} -
\begin{bmatrix}
i & 1 & 2 & l \\
i & 2 & 1 & l \\
\end{bmatrix},  \quad  \quad  i,l \in \{1,2\}.
$$
The associated codimension zero toric fiber product is the hierarchical model
associated to the complex $\Gamma = C \cup \{\{2,3\}\}$, two triangles glued along
an edge.  It produces four quadratic elements of ${\rm Quad}:$
$$
\begin{bmatrix}
1 & j & k & 1 \\
2 & j & k & 2 \\
\end{bmatrix} -
\begin{bmatrix}
1 & j & k & 2 \\
2 & j & k & 1 \\
\end{bmatrix},  \quad  \quad  j,k \in \{1,2\}.
$$
A triangle with edges $12,13,23$ has a single quartic move in its Markov basis,
which is:
$$
\begin{bmatrix}
1 & 1 & 1 \\
1 & 2 & 2 \\
2 & 1 & 2 \\
2 & 2 & 1
\end{bmatrix}
- 
\begin{bmatrix}
1 & 1 & 2 \\
1 & 2 & 1 \\
2 & 1 & 1 \\
2 & 2 & 2
\end{bmatrix}.
$$
Lifting this move produces $16$ quartic Markov basis elements:
$$
\begin{bmatrix}
1 & 1 & 1 & l_{1} \\
1 & 2 & 2 & l_{2}\\
2 & 1 & 2 & l_{3}\\
2 & 2 & 1 & l_{4}
\end{bmatrix}
- 
\begin{bmatrix}
1 & 1 & 2 & l_{3}\\
1 & 2 & 1 & l_{4}\\
2 & 1 & 1 & l_{1}\\
2 & 2 & 2 & l_{2}
\end{bmatrix}  \quad \quad  l_{1}, l_{2}, l_{3}, l_{4} \in \{1,2\}.
$$
Similarly, the lifting operation from the cycle with edges $23,24,34$ produces
$$
\begin{bmatrix}
i_{1} & 1 & 1 & 1 \\
i_{2} & 1 & 2 & 2 \\
i_{3} & 2 & 1 & 2 \\
i_{4} & 2 & 2 & 1
\end{bmatrix}
- 
\begin{bmatrix}
i_{1} & 1 & 1 & 2 \\
i_{2} & 1 & 2 & 1 \\
i_{3} & 2 & 1 & 1 \\
i_{4} & 2 & 2 & 2
\end{bmatrix}  \quad \quad  i_{1}, i_{2}, i_{3}, i_{4}  \in \{1,2\}.
$$
Theorem~\ref{thm:slowvarying} implies that the lifts of $8$ quadrics and $32$
quartics generate the~$I_{C, {\bf 2}}$.  However, these elements do not form a
minimal generating set.  Direct computation in {\tt 4ti2}~\cite{4ti2} shows that a
minimal Markov basis contains all 8 quadrics but only 8 of the quartics.
\end{ex}

Similar arguments and the description of Markov bases of small cycles in
Lemma~\ref{lem:smallgraphs} can be used to get an explicit description of Markov
bases of the four-cycles that appear in Example~\ref{ex:hillar}.  
We can also produce analogous results for higher dimensional complexes.

\begin{thm}\label{thm:bipyramid}
  Let $B_{n}$ be the simplicial complex with vertex set $[n+2]$ and minimal
  non-faces $[n]$ and~$\{n+1,n+2\}$. The ideal $I_{B_{n}, {\bf 2}}$ has a
  generating set consisting of binomials of degrees $2, 2^{n-1}, $ and $2^{n}$.
\end{thm}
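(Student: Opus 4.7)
The plan is to present $I_{B_n, \mathbf{2}}$ as a codimension one toric fiber product, apply Theorem~\ref{thm:slowvarying}, and then analyse the associated codimension zero piece by a further decomposition via Proposition~\ref{prop:gammatfp}, so that generators appear in each of the three claimed degrees.

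First I would split the vertex set as $V_1 = [n] \cup \{n+1\}$ and $V_2 = [n] \cup \{n+2\}$, so $S = V_1 \cap V_2 = [n]$.  Reading off from the non-face description gives $(B_n)_{V_1} = \mathrm{cone}_{n+1}(\partial\Delta_{n-1})$, $(B_n)_{V_2} = \mathrm{cone}_{n+2}(\partial\Delta_{n-1})$, and $(B_n)_{V_1} \cap (B_n)_{V_2} = ((B_n)_{V_\ell})_{S} = \partial\Delta_{n-1}$ on $[n]$, so Proposition~\ref{prop:gammatfp} presents $I_{B_n, \mathbf{2}}$ as the toric fiber product of $I_{(B_n)_{V_1}, \mathbf{2}}$ and $I_{(B_n)_{V_2}, \mathbf{2}}$ over $A_{\partial\Delta_{n-1}, \mathbf{2}}$.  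Since $\ker A_{\partial\Delta_{n-1},\mathbf{2}}$ has rank one by the dimension formula~\eqref{eq:dimformula}, this is a codimension one product.  Lemma~\ref{lem:cones}(1) supplies the single degree $2^{n-1}$ generator of $I_{\partial\Delta_{n-1},\mathbf{2}}$, and Lemma~\ref{lem:cones}(2) yields Markov bases $\calf$ for $I_{(B_n)_{V_1}, \mathbf{2}}$ and $\calg$ for $I_{(B_n)_{V_2}, \mathbf{2}}$ each consisting of two binomials of degree $2^{n-1}$.

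The generator $\bfh$ of $\ker A_{\partial\Delta_{n-1},\mathbf{2}}$ has $\|\bfh\|_1 = 2^n$, while every element of $\calf, \calg$ has $1$-norm $2 \cdot 2^{n-1} = 2^n < 2^{n+1}$, so Proposition~\ref{prop:slowvaryingcheck} furnishes the slow-varying hypothesis.  Theorem~\ref{thm:slowvarying} then delivers the Markov basis $\calh \cup \Glue(\calf, \calg)$ for $I_{B_n, \mathbf{2}}$, where $\calh$ is a Markov basis of the associated codimension zero product $\tilde{I}_{B_n, \mathbf{2}}$.  For the glued elements, note that $\phi_{xw}(f_j)$ and $\phi_{yw}(g_k)$ both equal the $\partial\Delta_{n-1}$ relation for every $j,k \in \{1,2\}$, so compatibility forces $\bfv_1 = \bfv_2 = 0$ and $\Glue(\calf, \calg)$ consists of four binomials of degree $2^{n-1}$.

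Next I analyse $\calh$.  By Proposition~\ref{prop:gammaassoc0}, $\tilde{I}_{B_n,\mathbf{2}} = I_{\tilde{B}_n,\mathbf{2}}$ with $\tilde{B}_n = B_n \cup 2^{[n]}$.  The crucial combinatorial step is to check that $(\tilde{B}_n)_{V_\ell} = \partial\Delta_n$ on $V_\ell$ for $\ell = 1,2$: every proper subset of $V_\ell$ lies either in $2^{[n]}$ or inside some facet $([n]\setminus\{i\}) \cup \{n+\ell\}$ of $B_n$, while $V_\ell$ itself is excluded because it contains $[n]$ but does not lie inside $[n]$.  Applying Proposition~\ref{prop:gammatfp} a second time, now with full simplex overlap $2^{[n]}$, presents $I_{\tilde{B}_n,\mathbf{2}}$ as a codimension zero toric fiber product of two $\partial\Delta_n$ ideals.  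By Lemma~\ref{lem:cones}(1) each factor has a single generator of degree $2^n$, and the codimension zero generation theorem of Section~\ref{sec:codim0} expresses $\calh$ as two sets of Lifts (binomials of degree $2^n$) together with $\Quad$ (binomials of degree $2$).  Combining, the generating set $\calh \cup \Glue(\calf, \calg)$ consists only of binomials of degrees $2$, $2^{n-1}$, and $2^n$, as claimed; the main substantive step is the identification $(\tilde{B}_n)_{V_\ell} = \partial\Delta_n$, without which one would miss the degree $2^n$ generators entirely.
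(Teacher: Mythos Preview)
Your proof is correct and follows essentially the same route as the paper's: the same decomposition $V_1=[n]\cup\{n+1\}$, $V_2=[n]\cup\{n+2\}$, the same slow-varying verification via Proposition~\ref{prop:slowvaryingcheck}, and the same identification of the associated codimension zero piece as two copies of $\partial\Delta_n$ glued along the full simplex $2^{[n]}$. You are somewhat more explicit than the paper in checking $(\tilde{B}_n)_{V_\ell}=\partial\Delta_n$ and in tracking the $1$-norms, but there is no substantive difference in approach.
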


\begin{proof}
  For $i=1,2$ let $\Gamma_i$ be the cone $B_n \setminus (n+i)$ over the boundary
  of the simplex on~$[n]$. Then $B_n = \Gamma_1 \cup \Gamma_2$.  According to part
  (1) of Lemma~\ref{lem:cones} the Markov basis of $I_{\Gamma_1 \cap \Gamma_2 ,
    {\bf 2}}$ consists of a single element of degree $2^{n-1}$; and according to
  part (2) the ideals $I_{\Gamma_{i}, {\bf 2}}$ are each generated by two
  binomials of degree $2^{n-1}$.  Since $2^{n-1} < 2 \times 2^{n-1}$, by
  Proposition~\ref{prop:slowvaryingcheck}, the Markov bases for $I_{\Gamma_{1},
    {\bf 2}}$ and $I_{\Gamma_{2}, {\bf 2}}$ are slow-varying with respect to the
  Markov basis of $I_{\Gamma_1 \cap \Gamma_2 , {\bf 2}}$.  The set of glue moves
  consists of $4$ binomials of degree $2^{n-1}$.

  The simplicial complex $\tilde{\Gamma}$ appearing in the associated codimension
  zero toric fiber product has $[n]$ as an additional face.  Consequently it
  consists of the boundaries of two $(n-1)$-dimensional simplices that share a
  single facet.  By part (1) of Lemma~\ref{lem:cones}, the Markov basis of the boundary
  of an $(n-1)$-dimensional simplex consists of a single element of degree
  $2^{n}$.  The lifting operation preserves degree and produces $2^{2^{n}}$
  elements per boundary simplex, for a total of $2^{2^{n} +1}$ elements of
  degree~$2^{n}$.  Finally, there are $2^{n}$ quadrics in~${\rm Quad}$.
  Theorem~\ref{thm:slowvarying} shows that the union of all these elements is a
  Markov basis.
\end{proof}

The simplicial complex $B_{n}$ is the boundary of the polytope that is a bipyramid
over a simplex.  In particular, it is a simplicial sphere.
Theorem~\ref{thm:bipyramid} and the results of \cite{Petrovic2009} provide
evidence for the following conjecture.

\begin{conj}\label{conj:triMa}
  Let $\Gamma$ be a triangulation of a sphere of dimension~$n$.  Then the Markov
  basis of $I_{\Gamma, {\bf 2}}$ consists of elements of degree at most $2^{n+1}$.
\end{conj}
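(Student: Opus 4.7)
The plan is to proceed by induction on the sphere dimension $n$, using the toric fiber product framework of Sections~\ref{sec:tfpandmsp}--\ref{sec:generators} as specialized to hierarchical models in Proposition~\ref{prop:gammatfp}. The base case $n=1$ is a cycle, for which Theorem~\ref{thm:kral} yields $\mu(I_{\Gamma,\mathbf{2}}) \leq 4 = 2^{n+1}$. The extremal case in arbitrary dimension $n$ is the boundary of the $(n+1)$-simplex, where Lemma~\ref{lem:cones}(1) produces a single generator of exactly degree~$2^{n+1}$, so the bound is sharp.

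For the inductive step I would seek a decomposition $\Gamma = \Gamma_{1} \cup \Gamma_{2}$ into two triangulated $n$-balls whose intersection $\Sigma = \Gamma_{1} \cap \Gamma_{2}$ is an induced triangulated $(n-1)$-sphere with vertex set $S = V_{1} \cap V_{2}$ satisfying $(\Gamma_{1})_{S} = (\Gamma_{2})_{S} = \Sigma$. Proposition~\ref{prop:gammatfp} then identifies $I_{\Gamma,\mathbf{2}}$ with the toric fiber product $I_{\Gamma_{1},\mathbf{2}} \times_{A_{\Sigma,\mathbf{2}}} I_{\Gamma_{2},\mathbf{2}}$. By induction, $\mu(I_{\Sigma,\mathbf{2}}) \leq 2^{n}$, and Proposition~\ref{prop:gammaassoc0} combined with Theorem~\ref{thm:cpp} recursively describes the Markov basis of $I_{\Gamma,\mathbf{2}}$ in terms of $\Lift$, $\Quad$, and $\Glue$ moves built from Markov bases of the two $n$-balls. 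Each glue binomial is obtained by identifying a common $\pi$-image in two bound compatible binomials, so its degree is at most the sum of the relevant ball-side and sphere-side degrees; bounding this sum by $2^{n+1}$ requires an auxiliary degree bound on the two ball ideals, which I would attempt to prove simultaneously by a secondary cone or shelling decomposition as in Lemma~\ref{lem:cones}(2).

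The hardest step is producing such a separating induced sphere $\Sigma$. In dimension $n \geq 3$ there are triangulated spheres that are neither shellable nor polytopal, and an induced subcomplex with the prescribed form $(\Gamma_{i})_{S} = \Sigma$ may fail to exist: the star/antistar decomposition around a vertex $v$ does not generally satisfy the induced-subcomplex condition, because vertices of $\operatorname{link}(v)$ can support simplices in $\Gamma$ that do not lie in $\operatorname{link}(v)$. A workaround would be to invoke Pachner's theorem and show that each bistellar flip preserves the Markov-basis degree bound, but flips do not correspond cleanly to single toric fiber products, so one would need a much finer analysis.

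A secondary obstacle is verifying the compatible projection property of Definition~\ref{def:comp-proj-prop} for the inductively produced Markov bases. When $\Sigma = 2^{S}\setminus\{S\}$ the product is codimension one and Proposition~\ref{prop:slowvaryingcheck} together with the bound $\mu(I_{\Gamma_{i},\mathbf{2}}) \leq 2^{n+1}$ against $2\|\mathbf{h}\|_{1} = 2 \cdot 2^{|S|}$ can force the slow-varying condition provided $|S|$ is large enough; but for general $\Sigma$ the codimension is higher and one must explicitly track which lattice directions occur in each fiber. I expect the structural decomposition question to be the decisive obstruction, and a full proof may require restricting to shellable or polytopal $\Gamma$, leaving the conjecture in full generality open.
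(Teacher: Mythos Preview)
The statement you are attempting to prove is labeled a \emph{conjecture} in the paper, and the paper does not supply a proof; it only cites Theorem~\ref{thm:bipyramid} and the results of~\cite{Petrovic2009} as supporting evidence. So there is no paper proof to compare against.

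Your proposal is, as you yourself recognize, not a proof but a strategy with two explicitly identified gaps. Both gaps are genuine. The first---existence of an induced separating $(n-1)$-sphere $\Sigma$ with $(\Gamma_i)_S = \Sigma$---is exactly the obstruction that prevents the toric fiber product machinery from resolving the conjecture: for a general triangulated sphere there is no reason such a decomposition exists, and your star/antistar remark correctly pinpoints why the obvious candidate fails. The second gap, establishing the compatible projection property in codimension greater than one, is also real; the paper only provides the slow-varying shortcut in codimension one (Theorem~\ref{thm:slowvarying}), and no general tool beyond Definition~\ref{def:comp-proj-prop} is available. Your closing sentence, that the structural decomposition is the decisive obstruction and that a full proof may require restricting to shellable or polytopal spheres, is an accurate assessment of the state of affairs and is consistent with the paper leaving the statement as a conjecture.
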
  

To conclude this section, we give an example which shows how the gluing operation
can produce Markov basis elements of larger degree than either of the constituent
binomials.

\begin{ex}
  Let $G$ be the graph with vertex set $[5]$ and all edges except $1$--$5$, and assume
  that $d = {\bf 2}$.  Thus, $G$ consists of two $K_{4}$ graphs glued along an
  empty triangle.  The Markov basis for $K_{4}$ consists of $20$ elements of
  degree four and $40$ elements of degree six.  The overlap triangle is the
  boundary of a simplex, whose Markov basis consists of a single element of degree
  four.  Since $6 < 2 \times 4$, by Proposition~\ref{prop:slowvaryingcheck} the
  Markov bases of each of the $K_{4}$ are slow-varying.  Consider the following
  two binomials in the ideal of $K_{4}$:
  \begin{equation*}
    \begin{bmatrix}
      1 & 1 & 1 & 1 \\
      2 & 1 & 2 & 2 \\
      2 & 2 & 1 & 2 \\
      2 & 2 & 2 & 1 \\
      \hline
      1 & 1 & 1 & 1 \\
      1 & 2 & 2 & 2 \\
    \end{bmatrix}
    - 
    \begin{bmatrix}
      1 & 1 & 1 & 2 \\
      1 & 1 & 2 & 1 \\
      1 & 2 & 1 & 1 \\
      2 & 2 & 2 & 2 \\
      \hline
      2 & 1 & 1 & 1 \\
      2 & 2 & 2 & 2
    \end{bmatrix}, \quad \quad
    \begin{bmatrix}
      1 & 1 & 1 & 1 \\
      1 & 2 & 2 & 1 \\
      2 & 1 & 2 & 1 \\
      2 & 2 & 1 & 2 \\
      \hline
      1 & 1 & 2 & 2 \\
      2 & 2 & 1 & 2 \\
    \end{bmatrix}
    - 
    \begin{bmatrix}
      1 & 1 & 2 & 1 \\
      1 & 2 & 1 & 2 \\
      2 & 1 & 1 & 2 \\
      2 & 2 & 2 & 2 \\
      \hline
      1 & 1 & 2 & 1 \\
      2 & 2 & 1 & 1
    \end{bmatrix}.
  \end{equation*}
  The first sextic comes from the $K_{4}$ on vertex set $\{1,2,3,4\}$ and the
  second one from the $K_{4}$ on vertex set $\{2,3,4,5\}$.  In the columns
  corresponding to $\{2,3,4\}$ they agree in the first four rows and disagree in
  the last two rows.  This means that upon gluing these sextics, we produce moves
  of degree $4 + 2 + 2 = 8$.  In particular we get
  \begin{equation*}
    \begin{bmatrix}
      1 & 1 & 1 & 1 & 1\\
      2 & 1 & 2 & 2 & 1\\
      2 & 2 & 1 & 2 & 1\\
      2 & 2 & 2 & 1 & 2\\
      \hline
      1 & 1 & 1 & 1 & m_{1}\\
      1 & 2 & 2 & 2 & m_{2}\\
      \hline
      i_{1} & 1 & 1 & 2 & 2 \\
      i_{2} & 2 & 2& 1 & 2
    \end{bmatrix}
    - 
    \begin{bmatrix}
      1 & 1 & 1 & 2 & 1 \\
      1 & 1 & 2 & 1 & 2 \\
      1 & 2 & 1 & 1 & 2 \\
      2 & 2 & 2 & 2 & 2 \\
      \hline
      2 & 1 & 1 & 1 & m_{1} \\
      2 & 2 & 2 & 2 & m_{2} \\
      \hline
      i_{1} & 1 & 1 & 2 & 1 \\
      i_{2} & 2 & 2 & 1 & 1
    \end{bmatrix},  \quad \quad  i_{1}, i_{2}, m_{1}, m_{2} \in \{1,2\}.
  \end{equation*}
  In this example gluing yields degrees four, six, and eight.  Lifting produces
  Markov basis elements of degrees four and six.  Direct computation with {\tt
    4ti2} shows, however, that a minimal Markov basis of this model contains only
  binomials of degree two, four, and six.  Therefore the gluing operation may
  produce elements of unnecessarily large degree.
\end{ex}


\subsection{Cycles and ring graphs}
In this subsection, and the next, $\Gamma = G$ is a graph.  We start with cycles
and graphs that can be easily constructed from cycles, then explore $K_{4}$-minor
free graphs, providing a new proof of the main result in~\cite{Kral2008}.  To set
up induction we provide the Markov bases of simple graphs.

\begin{lemma}\label{lemma:pathFree}
  Let $P$ be a path and $d\in\zz^{V}_{\geq 2}$ arbitrary, then $\mu(P,d) = 2$.
\end{lemma}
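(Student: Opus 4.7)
The plan is to induct on the number of edges (equivalently, vertices) in the path $P$, using the toric fiber product decomposition machinery developed in Sections~\ref{sec:tfpandmsp}--\ref{sec:generators} together with Proposition~\ref{prop:gammatfp}. The base case is a path with two vertices (a single edge): here the parametrization $p_{i_1 i_2} \mapsto a^{\{1,2\}}_{i_1 i_2}$ is a bijection of monomials, so $I_{P,d} = 0$ and the bound $\mu(P,d) \leq 2$ holds trivially.

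For the inductive step I would take $P$ to be the path on vertices $\{1,\dots,n\}$ with edges $\{i,i+1\}$ for $n \geq 3$, and split it as $P = P_1 \cup P_2$, where $P_1$ is the subpath on $V_1 = \{1,\dots,n-1\}$ and $P_2$ is the single edge on $V_2 = \{n-1,n\}$. The intersection is the isolated vertex $S = \{n-1\}$, so $(P_1)_S = (P_2)_S = P_1 \cap P_2 = 2^{S}$, and the hypotheses of Proposition~\ref{prop:gammatfp} are satisfied. Thus
\[
I_{P,d} \;=\; I_{P_1, d_{V_1}} \times_{A_{\Gamma_S,d_S}} I_{P_2, d_{V_2}}.
\]
Since $\Gamma_S = 2^S$, the formula~\eqref{eq:dimformula} shows this is a \emph{codimension zero} toric fiber product.

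Now I would invoke the codimension zero generation theorem from Section~\ref{sec:codim0}: a generating set of $I_{P,d}$ is $\Lift(\calf) \cup \Lift(\calg) \cup \Quad$ for any binomial generating sets $\calf$ of $I_{P_1, d_{V_1}}$ and $\calg$ of $I_{P_2, d_{V_2}}$. By the inductive hypothesis we may take $\calf$ to consist of quadrics, and by the base case we can take $\calg = \emptyset$. The lift construction preserves degree, and the elements of $\Quad$ are by definition quadratic. Hence every generator in this set has degree at most $2$, so $\mu(P,d) \leq 2$.

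Finally, for the equality $\mu(P,d) = 2$ when $n \geq 3$, it suffices to exhibit a nonzero quadric in $I_{P,d}$; the standard conditional independence binomial arising from $X_1 \Perp X_3 \mid X_2$ (any two indices differing only in a non-boundary coordinate) lies in the kernel and is nonzero because $d_v \geq 2$ everywhere. The main thing to check carefully is the verification that the chosen decomposition really satisfies $(\Gamma_1)_S = (\Gamma_2)_S = \Gamma_1 \cap \Gamma_2$ and that $\Gamma_S = 2^S$ gives codimension zero, but both are immediate from the fact that $S$ is a single vertex; no higher codimension machinery or the compatible projection property of Section~\ref{sec:generators} is needed for paths.
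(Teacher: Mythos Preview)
Your argument is correct. It differs from the paper's proof, which is a one-line citation: a path is a decomposable simplicial complex, so the result follows directly from the known Markov basis theory for decomposable models in~\cite{Dobra, TakkenThesis}. You instead give a self-contained induction using the paper's own codimension-zero toric fiber product machinery (Proposition~\ref{prop:gammatfp} together with the generation theorem of Section~\ref{sec:codim0}). In effect, your argument is the special case of Lemma~\ref{lemma:s6l1} where the separator is a single vertex, proved directly rather than by citation. The paper's route is shorter and situates the lemma in the broader literature; yours has the virtue of staying internal to the toric fiber product framework and illustrating concretely how $\Lift$ and $\Quad$ already suffice for paths. One small remark: the exact equality $\mu(P,d)=2$ requires the ideal to be nonzero, which fails for a single edge; you implicitly handle this by proving $\mu\le 2$ always and exhibiting a nonzero quadric only for $n\ge 3$, which is the right way to read the statement.
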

\begin{proof}
  This follows from Theorem~\ref{thm:codimzero} or the results on decomposable
  simplicial complexes in~\cite{Dobra, TakkenThesis}.
\end{proof}

\begin{lemma}[Small Graphs]\label{lem:smallgraphs}$ $\\[-3ex]
\begin{enumerate}
\item Let $K_{3}$ be the triangle.  The following table contains known values of
  $\mu(K_{3},d)$:
\[ 
\begin{array}{c|cccccc}
d_1 & 2 & 3 & 3 & 3        & 3 & 4        \\
d_2 & p & 3 & 3 & 3        & 4 & 4      \\
d_3 & q & 3 & 4 & q \geq 5 & 4 & 4 \\
\hline 
\mu(I_{K_{3},d}) & \min(2p,2q) & 6 & 8 & 10 & 12 & 14 \ \\ 
\end{array} 
  \]
\item If $C$ is a four-cycle with edges $12,23,34,41$, then $\mu(C,d)$ takes the
  following values
\[ \begin{array}{c|cccccccc}
d_1 & 2 & 2 & 2 & 2 & 2 & 2 & 2 & 3 \\
d_2 & 2 & 2 & 2 & 2 & 2 & 2 & 3 & 3\\
d_3 & 3 & 3 & 3 & 4 & 4 & 5 & 3 & 3\\
d_4 & 3 & 4 & 5 & 4 & 5 & 5 & 3 & 3\\
\hline 
\mu(C,d)  & 6 & 6 & 6 & 8 & 8 & 10 & 6 & 6 \\ 
\end{array} \]  
\item If $C$ is a five-cycle with edges $12,23,34,45,51$, $d_1=d_2=2$, and
  $d_3=d_4=d_5=3$, then $\mu(C,d) = 6$.
\item Let $K_{2,3}$ be the complete bipartite graph on $\{1,2\}$ and
  $\{3,4,5\}$. If $d_{1} = d_{2} = 3$ and $d_{3} = d_{4} = d_{5} = 2$, then
  $\mu(K_{2,3},d) = 6$.
\item The complete graph $K_{4}$ with $d = {\bf 2}$ satisfies $\mu(K_{4},{\mathbf 2}) = 6$.
\end{enumerate}
\end{lemma}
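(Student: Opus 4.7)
The plan is to treat each item of the lemma as a finite computation, organized so that no single calculation is too large. For every listed $(\Gamma,d)$ the matrix $A_{\Gamma,d}$ has an explicit, small size (the number of columns is $\prod_{v\in V} d_v$, which is at most $3\cdot3\cdot 5 = 45$ in the largest triangle case and $2^4 = 16$ in the $K_4$ case), so a minimal Markov basis can be obtained directly by software such as \texttt{4ti2}~\cite{4ti2}, and $\mu(\Gamma,d)$ read off as the largest degree appearing in the minimal binomial generating set. The verification therefore consists in running the corresponding \texttt{markov} computations and checking the stated numbers.

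Before resorting to computation, one can reduce the work substantially using structure. First, for items (1) and (2) with a binary variable at a degree-2 vertex of $K_3$ (the first column of the table in (1)), the model is a codimension one toric fiber product of two paths along $K_2$, and the slow-varying arguments of Section~\ref{sec:generators} together with Lemma~\ref{lemma:pathFree} reproduce the $\min(2p,2q)$ formula that was first established by Aoki and Takemura; this handles an entire infinite family at once. Second, symmetries $S_{d_1}\times S_{d_2}\times S_{d_3}$ acting on the fibers allow one to restrict the minimal Markov basis calculation to representatives of orbits, which is important for the larger $(3,3,q)$ and $(3,4,4)$, $(4,4,4)$ triangle entries where unreduced computations are noticeably slower.

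For the remaining entries one proceeds directly. The four-cycle and five-cycle entries in (2) and (3) and the $K_{2,3}$ entry in (4) each have at most $45$ columns, so a single \texttt{4ti2} run produces a minimal binomial generating set from which $\mu$ is read off; the $C_4$ table is also partially anticipated by Example~\ref{ex:hillar} and by the slow-varying analysis of the binary four-cycle worked out earlier in this section, which gives the degree-$4$ upper bound when $d_1=d_2=2$. Item (5), the binary $K_4$, is the well-known binary no-three-way interaction model whose minimal Markov basis consists of quadrics, quartics, and sextics; this has been computed many times in the algebraic statistics literature and is recovered by \texttt{4ti2} in a fraction of a second.

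The only conceptual obstacle is a matching lower bound for $\mu$ in a handful of asymmetric triangle cases in (1), since \texttt{4ti2} by itself certifies the upper bound only up to the trust one places in the software. To make the lower bounds rigorous I would exhibit, for each claimed value $\mu(K_3,d)$, an explicit binomial of that degree that lies in the toric ideal but not in the ideal generated by all binomials of strictly smaller degree; the existence of such a witness follows from the (multi)graded Hilbert function comparison between $I_{\Gamma,d}$ and the ideal generated by the candidate lower-degree generators, again a finite linear algebra check. With these steps combined, each line of the table is either a special case of a theorem already proved (line one of (1)) or a finite, certifiable computation, completing the lemma.
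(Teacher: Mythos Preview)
Your overall strategy---cite existing literature for the infinite families and run {\tt 4ti2} for the finitely many remaining cases---is exactly what the paper does. However, there is a genuine gap in how you handle the two infinite families in part~(1).

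For the column $d=(2,p,q)$ you claim that $K_3$ is ``a codimension one toric fiber product of two paths along $K_2$.'' This is false: $K_3$ admits no nontrivial decomposition $\Gamma=\Gamma_1\cup\Gamma_2$ with $\Gamma_1\cap\Gamma_2=(\Gamma_1)_S=(\Gamma_2)_S$ as in Proposition~\ref{prop:gammatfp}, because any proper induced subcomplex of $K_3$ on a subset $S$ misses an edge of $K_3$ not contained in~$S$. (Moreover, gluing ``along $K_2$'' would be codimension \emph{zero} by~\eqref{eq:dimformula}, not one.) So the slow-varying machinery of Section~\ref{sec:generators} simply does not apply to~$K_3$, and you cannot recover the $\min(2p,2q)$ formula this way. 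The paper instead cites Diaconis--Sturmfels~\cite{Diaconis1998} for this column; you have also misattributed it to Aoki--Takemura.

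You have the analogous problem for the column $d=(3,3,q)$ with $q\geq 5$: this is again an infinite family, not a single {\tt 4ti2} run, and your symmetry remark does not reduce it to finitely many cases. The paper handles this column by citing Aoki--Takemura~\cite{Aoki2003}. Once you replace your incorrect structural claim with the two correct citations, the rest of your proposal (direct {\tt 4ti2} computation for the remaining finitely many entries in (1)--(5), with optional Hilbert-function certification of minimality) is fine and coincides with the paper's proof.
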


\begin{proof}
  The computation for $K_{3}$ with $d = (2,p,q)$ is contained in the original work
  of Diaconis and Sturmfels~\cite{Diaconis1998}.  The values for $d = (3,3,q)$
  were determined by Aoki and Takemura~\cite{Aoki2003}.  All other values have
  been computed using {\tt 4ti2} and Markov bases are available on the Markov
  Basis Database~\cite{Kahle2008}.
\end{proof}

\begin{lemma}\label{lemma:s6l1}
  Let $G$ be a graph, and $V_{1}, V_{2} \subseteq V$ such that $V_{1} \cup V_{2} =
  V$, $G = G_{V_{1}} \cup G_{V_{2}}$ and either $V_{1} \cap V_{2} = \{u\}$ or
  $V_{1} \cap V_{2} = \{u, v\}$ with $uv$ an edge of~$G$.  Then
  \[  
  \mu( G, d ) = \max ( 2, \mu(G_{V_{1}},d_{V_{1}}), \mu(G_{V_{2}}, d_{V_{2}}) ).
  \] 
\end{lemma}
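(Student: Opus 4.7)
The plan is to express $I_{G,d}$ as a codimension zero toric fiber product via Proposition~\ref{prop:gammatfp}, obtain the upper bound from the codimension zero generation theorem of Section~\ref{sec:codim0}, and establish the lower bound via an explicit retraction.  Let $\Gamma$ and $\Gamma_i$ denote the clique complexes of $G$ and $G_{V_i}$; the decomposition $G = G_{V_1}\cup G_{V_2}$ forces every clique of $G$ to lie entirely in $V_1$ or in $V_2$ (a clique spanning both $V_1\setminus V_2$ and $V_2\setminus V_1$ would require an edge in neither $G_{V_1}$ nor $G_{V_2}$), so $\Gamma = \Gamma_1 \cup \Gamma_2$.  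By hypothesis $S := V_1\cap V_2$ is either a vertex or an edge of $G$, so in both cases $S$ is a face of both $\Gamma_1$ and $\Gamma_2$, giving $\Gamma_S = 2^S$ and $\Gamma_1 \cap \Gamma_2 = (\Gamma_1)_S = (\Gamma_2)_S$.  Proposition~\ref{prop:gammatfp} then yields $I_{G,d} = I_{G_{V_1},d_{V_1}} \times_{A_{\Gamma_S,d_S}} I_{G_{V_2},d_{V_2}}$, and the dimension formula~\eqref{eq:dimformula} combined with $\Gamma_S = 2^S$ confirms this toric fiber product has codimension zero.

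For the upper bound, let $\calf$ and $\calg$ be minimal Markov bases of $I_{G_{V_1},d_{V_1}}$ and $I_{G_{V_2},d_{V_2}}$.  The codimension zero generation theorem asserts that $\Lift(\calf) \cup \Lift(\calg) \cup \Quad$ generates $I_{G,d}$.  Since $\Lift$ preserves the degree of each binomial and every element of $\Quad$ is quadratic,
\[
\mu(G,d) \leq \max\bigl(2,\, \mu(G_{V_1},d_{V_1}),\, \mu(G_{V_2},d_{V_2})\bigr).
\]

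For the lower bound, for each $i \in \{1,2\}$ I would construct a graded $\kk$-algebra surjection $\sigma_i : \kk[p_j : j \in \D_V] \twoheadrightarrow \kk[p_{j'} : j' \in \D_{V_i}]$ by fixing an arbitrary $j^* \in \D_{V\setminus V_i}$ and setting $\sigma_i(p_j) = p_{j_{V_i}}$ if $j_{V\setminus V_i} = j^*$ and $\sigma_i(p_j) = 0$ otherwise, and then verify $\sigma_i(I_{G,d}) = I_{G_{V_i},d_{V_i}}$.  Granted that equality, applying $\sigma_i$ to any generating set of $I_{G,d}$ in degrees $\leq \mu(G,d)$ yields a generating set of $I_{G_{V_i},d_{V_i}}$ in the same degrees, whence $\mu(G_{V_i},d_{V_i}) \leq \mu(G,d)$.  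The bound $\mu(G,d) \geq 2$ is automatic whenever the TFP is nontrivial, since $V_i\setminus S \neq \emptyset$ and $d_v \geq 2$ make $\Quad$ nonempty.  The main obstacle is the equality $\sigma_i(I_{G,d}) = I_{G_{V_i},d_{V_i}}$: the ``$\supseteq$'' direction is routine, by pulling each binomial back through the substitution $j^*$ into the $V\setminus V_i$ slots; the ``$\subseteq$'' direction requires checking that the extra monomial factors appearing in $\phi_{G,d}(p_j)|_{j_{V\setminus V_i} = j^*}$ depend only on the $S$-coordinates $j_S$, and that these $S$-factors cancel against any element of $\ker A_{G_{V_i},d_{V_i}}$ because $S$ is a face of $\Gamma_i$, so every lattice element in this kernel already has trivial $S$-marginal.
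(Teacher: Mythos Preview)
Your setup and upper bound are correct and coincide with the paper's approach: the paper simply observes that the hypothesis makes $I_{G,d}$ a codimension zero toric fiber product (equivalently, a reducible hierarchical model) and cites \cite{Dobra2004, Hosten2002, Sullivant2007} for the full equality. So you are supplying more detail than the paper does, which is fine.

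For the lower bound your retraction strategy is valid, but the particular map $\sigma_i$ is more delicate than necessary, and your sketch of the containment $\sigma_i(I_{G,d})\subseteq I_{G_{V_i},d_{V_i}}$ has two gaps. First, as written it reads circularly: you invoke that elements of $\ker A_{G_{V_i},d_{V_i}}$ have trivial $S$-marginal in order to deduce that a given vector lies in that very kernel. The argument can be repaired by instead using that the original vector lies in $\ker A_{G,d}$ and that $S$ is a face of $\Gamma$, so the $S$-marginals already agree before projecting; but this is not what you wrote. Second, since $\sigma_i$ kills variables, you must rule out that a binomial in $I_{G,d}$ maps to a nonzero monomial (impossible in a prime toric ideal). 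This does not happen---for each $w\in V\setminus V_i$ the singleton $\{w\}$ is a face of~$\Gamma$, so in any binomial of $I_{G,d}$ the two monomials have identical $\{w\}$-marginals and hence survive or die together---but you do not address it.

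A cleaner route avoids both issues: use the projection $\phi_{zx}\colon p_j\mapsto p_{j_{V_i}}$ (forget the $V\setminus V_i$ coordinates rather than specialize them), which already appears in Section~\ref{sec:generators}. Writing $\calb=A_{G_{V_i},d_{V_i}}$ and $\calc=A_{G_{V_{3-i}},d_{V_{3-i}}}$, any binomial $p^u-p^v\in I_{G,d}=I_{\calb\times_\cala\calc}$ satisfies $(\calb\times_\cala\calc)(u-v)=0$, hence $\calb\cdot\pi_{zx}(u-v)=0$ by plain linear projection, giving $\phi_{zx}(I_{G,d})\subseteq I_{G_{V_i},d_{V_i}}$ immediately; the reverse containment is your lifting argument. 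Since $\phi_{zx}$ sends variables to variables (never to zero), it is degree-preserving, and $\mu(G_{V_i},d_{V_i})\le\mu(G,d)$ follows.
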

\begin{proof}
  In either case $I_{G,d}$ is a codimension zero toric fiber product and
  Theorem~\ref{thm:codimzero} applies.  The statement also follows from results on
  reducible hierarchical model in~\cite{Dobra2004, Hosten2002, Sullivant2007}.
\end{proof}

\begin{lemma}\label{lemma:s6l2}
Let $G$ be a graph, and $V_{1}, V_{2} \subseteq V$ such that $V_{1} \cup V_{2} = V$, $G = G_{V_{1}} \cup G_{V_{2}}$ and $V_{1} \cap V_{2}  = \{u, v\}$ where $uv$ is not an edge of $G$, and suppose that $d_{u} = d_{v} = 2$.  Further suppose that $\mu(G_{V_{1}}, d_{V_{1}}) = \mu(G_{V_{2}}, d_{V_{2}}) = 2$.   Then
\[ \mu(G, d) \leq \max ( 2, \mu(G_{V_{1}} \cup\, uv, d_{V_{1}}), \mu(G_{V_{2}} \cup\, uv, d_{V_{2}}) ). \]
\end{lemma}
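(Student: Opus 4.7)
The plan is to realize $I_{G,d}$ as a codimension one toric fiber product satisfying the hypotheses of Theorem~\ref{thm:slowvarying}, and then bound the degrees contributed by each piece of the resulting Markov basis. Since $V_{1}\cap V_{2}=\{u,v\}$ with $uv\notin G$ and $d_{u}=d_{v}=2$, Proposition~\ref{prop:gammatfp} gives
\[
I_{G,d} \;=\; I_{G_{V_{1}},d_{V_{1}}} \tfp I_{G_{V_{2}},d_{V_{2}}},
\]
where $\cala = A_{G_{\{u,v\}},(2,2)}$ is the configuration of two independent binary variables. The codimension formula \eqref{eq:dimformula} shows this product has codimension one, and a generator $\bfh$ of $\ker \cala$ is the unique Markov basis element for two independent binaries, with $\|\bfh\|_{1}=4$.

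Because $\mu(G_{V_{i}},d_{V_{i}})=2$, any minimal Markov bases $\calf$ of $I_{G_{V_{1}},d_{V_{1}}}$ and $\calg$ of $I_{G_{V_{2}},d_{V_{2}}}$ consist of binomials of degree at most two, hence of $1$-norm at most $4<2\|\bfh\|_{1}$. By Proposition~\ref{prop:slowvaryingcheck}, $\calf$ and $\calg$ are slow-varying with respect to $\bfh$. By Proposition~\ref{prop:gammaassoc0} the associated codimension zero toric fiber product equals $I_{G\cup uv,\,d}$ and decomposes further as the codimension zero toric fiber product of $I_{G_{V_{1}}\cup uv,\,d_{V_{1}}}$ and $I_{G_{V_{2}}\cup uv,\,d_{V_{2}}}$ over $A_{2^{\{u,v\}},(2,2)}$. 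Picking minimal Markov bases $\calf'$ and $\calg'$ of these two factors, the codimension zero generation theorem of Section~\ref{sec:codim0} yields
\[
\calh \;=\; \Lift(\calf') \cup \Lift(\calg') \cup \Quad
\]
as a Markov basis for the associated codimension zero product. Since $\Lift$ preserves degree and $\Quad$ contributes only quadrics, every element of $\calh$ has degree at most $\max\bigl(2,\mu(G_{V_{1}}\cup uv,d_{V_{1}}),\mu(G_{V_{2}}\cup uv,d_{V_{2}})\bigr)$.

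Theorem~\ref{thm:slowvarying} now guarantees that $\calh \cup \Glue(\calf,\calg)$ is a Markov basis of $I_{G,d}$, so it remains to bound the degrees of glued elements. For compatible $f\in\calf$ and $g\in\calg$ the factorizations $\phi_{xw}(f)=w^{\bfv_{1}}(w^{\bfu_{1}}-w^{\bfu_{2}})$ and $\phi_{yw}(g)=w^{\bfv_{2}}(w^{\bfu_{1}}-w^{\bfu_{2}})$ must share the common binomial $\pm\bfh$, which has degree two. Because $f$ and $g$ themselves have degree at most two, the factors $w^{\bfv_{1}}$ and $w^{\bfv_{2}}$ are forced to be trivial, and therefore $L(w^{\bfv_{2}})=R(w^{\bfv_{1}})=\{1\}$. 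Hence each $\operatorname{glue}(f,g)$ has degree at most two, and combining the bounds on $\calh$ and on $\Glue(\calf,\calg)$ gives the inequality claimed by the lemma.

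The one subtle point, and the main technical step, is the degree bookkeeping for $\Glue$: one must observe that in this codimension one, binary overlap setting the binomial $\bfh$ already saturates the degree of any element in $\calf$ or $\calg$, so no outside monomials arise and gluing cannot inflate degrees. The rest is a routine application of the slow-varying machinery developed in Sections~\ref{sec:generators} and~\ref{sec:codim0}, together with the decomposition statements of Propositions~\ref{prop:gammatfp} and~\ref{prop:gammaassoc0}.
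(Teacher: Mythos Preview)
Your proof is correct and follows essentially the same route as the paper: recognize $I_{G,d}$ as a codimension one toric fiber product via Proposition~\ref{prop:gammatfp}, check slow-varying through Proposition~\ref{prop:slowvaryingcheck}, identify the associated codimension zero product via Proposition~\ref{prop:gammaassoc0}, and then bound the degrees contributed by $\Lift$, $\Quad$, and $\Glue$ separately. Your treatment is in fact more explicit than the paper's in a couple of places---you spell out $\|\bfh\|_{1}=4$ and the reason $\bfv_{1}=\bfv_{2}=0$ in the glue step---but the logical structure is identical.
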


\begin{proof}
  The intersection of $G_{V_{1}}$ and $G_{V_{2}}$ is the graph with two nodes, and
  no edges.  Since $d_{u} = d_{v} = 2$, the dimension formula
  (\ref{eq:dimformula}) implies that this is a codimension one toric fiber
  product.  The toric ideal of the graph consisting of two 
  isolated nodes, and $d_{u} = d_{v} = 2$ is generated by a single
  quadratic binomial, by Lemma~\ref{lem:cones}.  Furthermore, the fact that $\mu(G_{V_{1}}, d_{V_{1}}) =
  \mu(G_{V_{2}}, d_{V_{2}}) = 2$, and that hierarchical models have no Markov
  basis elements of degree one, implies that the Markov bases of
  $I_{G_{V_{1}},d_{V_{1}}}$ and $I_{G_{V_{2}},d_{V_{2}}}$ are slow-varying,
  by Proposition~\ref{prop:slowvaryingcheck}.
  Hence Theorem~\ref{thm:slowvarying} shows that the Markov basis of
  $I_{G,d}$ consists of the glued elements of the Markov bases of
  $I_{G_{V_{1}},d_{V_{1}}}$ and $I_{G_{V_{2}},d_{V_{2}}}$, together with the
  Markov basis of the associated codimension zero toric fiber product, which is
  \[
  I_{G \cup\, uv,d} = I_{G_{V_{1}} \cup\, uv,d_{V_{1}}} \tfp I_{G_{V_{2}} \cup\,
    uv, d_{V_{2}}},
  \] 
  by Proposition~\ref{prop:gammaassoc0}.  Since we only ever glue quadrics along a
  quadric, the resulting binomial is also of degree~two.  The generators of the
  associated codimension zero toric fiber product consists of quadratic elements
  and lifts of generators of $I_{G_{V_{1}} \cup\, uv,d_{V_{1}}}$ and $I_{G_{V_{2}}
    \cup\, uv, d_{V_{2}}}$.  Since lifting preserves degrees, the quantity $\max ( 2,
  \mu(G_{V_{1}} \cup\, uv, d_{V_{1}}), \mu(G_{V_{2}} \cup\, uv, d_{V_{2}}))$ is
  the maximum degree of a generator of the associated codimension zero toric fiber
  product.
\end{proof}

\begin{lemma}\label{lemma:manyCycles}
  Let $C$ be a cycle with vertex set $V$ and $d\in\zz^{V}_{\geq 2}$.
\begin{itemize}
\item[(1)] If $C$ contains no edge $uv$ with $d_{u},d_{v} > 2$ then $\mu(C, d) = 4$.
\item[(2)] If all $d_v\leq 3$ and $C$ contains no path $u_1u_2u_3u_4$ with all
  $d_{u_i}>2$, then $\mu(C,d) \leq 6$.
\item[(3)] If all $d_v\leq 4$ and $C$ contains no path $u_1u_2u_3$ with all
  $d_{u_i}>2$, then $\mu(C,d) \leq 8$.
\item[(4)] If all $d_v\leq 5$ and $C$ contains no path $u_1u_2u_3$ with all
  $d_{u_i}>2$, then $\mu(C,d) \leq 10$.
\end{itemize}
\end{lemma}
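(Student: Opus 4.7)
The plan is to induct on the length $n$ of the cycle $C$, using Lemma~\ref{lemma:s6l2} to split at two non-adjacent vertices with $d$-value $2$ and deferring small $n$ to the explicit computations in Lemma~\ref{lem:smallgraphs}. Triangles and four-cycles are handled directly by the tables of Lemma~\ref{lem:smallgraphs}(1,2): for part (1) the forbidden-edge condition forces at most one vertex of $d_v > 2$ in a triangle and an independent set of such vertices in $C_4$, both cases bounded by $4$ in the tables; for parts (3) and (4) the forbidden-path hypothesis leaves at most two vertices of $d_v > 2$ in a triangle or $C_4$, which fall into the listed table entries. The exceptional $C_5$ base case $(2,2,3,3,3)$ required by part~(2) is exactly Lemma~\ref{lem:smallgraphs}(3).

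For the inductive step I would first show that, under each hypothesis and for $n$ above the base-case threshold, one can find two non-adjacent vertices $u,v$ with $d_u = d_v = 2$. Under part~(1) the vertices with $d_v > 2$ form an independent set in $C_n$, so at least $\lceil n/2 \rceil$ vertices have $d_v = 2$. Under parts~(2)--(4) the forbidden-path conditions force at least $\lceil n/k \rceil$ such vertices (with $k = 4$ for part (2), $k=3$ for parts (3) and (4)). Since $C_n$ is triangle-free, a set of pairwise adjacent vertices has size at most two; the only potential obstruction is that these $d_v = 2$ vertices form a single pair of adjacent vertices, in which case the remaining $n-2$ vertices form a consecutive run of $d_v > 2$ vertices. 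For $n$ beyond the base cases this run violates the forbidden-edge or forbidden-path hypothesis, so a suitable pair $u,v$ exists.

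With $u,v$ chosen, I would apply Lemma~\ref{lemma:s6l2} to the decomposition $C = G_{V_1} \cup G_{V_2}$ with $V_1 \cap V_2 = \{u,v\}$, where each $G_{V_i}$ is a path and hence has $\mu = 2$ by Lemma~\ref{lemma:pathFree}. This yields
\begin{equation*}
  \mu(C,d) \leq \max\bigl(2,\ \mu(G_{V_1} \cup uv,\, d_{V_1}),\ \mu(G_{V_2} \cup uv,\, d_{V_2})\bigr),
\end{equation*}
where each $G_{V_i} \cup uv$ is a strictly shorter cycle. The crucial verification is that the hypothesis of each part is inherited by these smaller cycles: since $d_u = d_v = 2$, any edge or short consecutive sub-path of $G_{V_i} \cup uv$ that traverses the chord $uv$ contains a vertex with $d = 2$, while any edge or sub-path not involving the chord lies entirely in $C$ and inherits the hypothesis directly. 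Thus the forbidden-edge condition of (1) and the forbidden-path conditions of (2)--(4) are preserved, and the induction hypothesis then supplies the respective bound $B \in \{4,6,8,10\}$.

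The main obstacle is the case analysis at the boundary of the induction, namely identifying exactly which small configurations admit no valid splitting pair $u,v$ and verifying that each such configuration is covered by Lemma~\ref{lem:smallgraphs}. The enumeration in that lemma appears to be tailored precisely to this purpose, so the careful piece of work is to check part-by-part that every tightly packed small cycle, where the $d_v = 2$ vertices are forced to be adjacent, coincides with an entry of the triangle, $C_4$, or $C_5$ tables.
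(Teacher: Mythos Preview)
Your approach is essentially identical to the paper's: induct on the cycle length, split at two non-adjacent vertices with $d$-value $2$ via Lemma~\ref{lemma:s6l2}, invoke Lemma~\ref{lemma:pathFree} for the path pieces, and check that the shorter cycles inherit the hypothesis because the new chord has both endpoints of $d$-value~$2$. The paper gives less detail than you do on the existence of the splitting pair and on the base-case bookkeeping, but the skeleton is the same.

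One small correction: for part~(1) you assert that the $C_4$ case is ``bounded by $4$ in the tables,'' but every entry in the $C_4$ table of Lemma~\ref{lem:smallgraphs}(2) has $\mu \geq 6$, and none of them has the $d>2$ vertices forming an independent set. This is harmless, because under hypothesis~(1) a four-cycle already has two non-adjacent $d=2$ vertices, so the inductive step applies at $n=4$ and reduces to two triangles; the paper accordingly uses only the triangle as the base case for~(1). Your base-case thresholds for parts~(2)--(4) are correct as stated.
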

\begin{proof}
  We give a detailed proof of (1).  According to Lemma~\ref{lem:smallgraphs} the
  statement holds for cycles of length three. We proceed by induction on the
  length of~$C$. There are always two non-adjacent vertices $u$ and $v$ in $C$
  with $d_u=d_v=2$. Let $V_{1}$ be the set of vertices on one of the paths in $C$
  from $u$ to $v$, and let $V_{2}$ be the set of vertices on the other path.
  According to Lemma~\ref{lemma:pathFree} the Markov width of paths is two. By
  induction we find $\mu(G_{V_{1}}\cup\, uv, d_{V_{1}}) = 4$ and $\mu(G_{V_{2}}
  \cup\, uv, d_{V_{2}}) = 4$, since those graphs are shorter cycles than $C$
  satisfying the conditions in~(1). By Lemma~\ref{lemma:s6l2}, the Markov width of
  $\mu(C,d) = 4$.  Statements (2)-(4) follow by the same inductive argument and
  reducing to the small graphs in Lemma~\ref{lem:smallgraphs}.
\end{proof}

Cycles can be patched together to form larger graph classes, for example ring graphs.

\begin{defn}
A \emph{ring graph} is a graph that can be recursively constructed from paths and cycles by disjoint unions, identifying a vertex of disjoint components, and identifying edges on disjoint components.  An \emph{outerplanar graph} is a graph with a planar embedding such that all vertices are on a circle.
\end{defn}

Outerplanar graphs are also characterized as the largest minor closed class that
excludes $K_{4}$ and~$K_{2,3}$.  This in particular implies that all outerplanar
graphs are series-parallel since they have no $K_4$-minors.  It is easy to see
that outerplanar graphs are ring graphs. Recall that a graph is
\emph{$k$-connected} if there is no way to disconnect it by removing at most $k-1$
vertices. We need to describe how to decompose 2-connected ring graphs into
cycles.

\begin{defn}
  A \emph{cycle decomposition} of a 2-connected ring graph $G$ is a sequence $C_1,
  C_2, \ldots, C_k$ of cycles in $G$ such that
\begin{itemize}
\item the union of all $C_i$ is $G,$ and
\item the intersection of $C_1\cup \cdots \cup C_{i}$ and $C_{i+1}$ is an edge for $1\leq i < k$.
\end{itemize}
\end{defn}

Any $2$-connected ring graph must have a cycle decomposition, since a
$2$-connected ring graph is obtained by only identifying edges in disjoint
components.

\begin{thm}\label{thm:s6main}
Let $G$ be a ring graph whose maximal 2-connected subgraphs are $G_1, G_2, \ldots, G_l$ and assume that $C_1^i, C_2^i, \ldots, C_{k_i}^i$ is a cycle decomposition of $G_i$ for all $1\leq i \leq l$. If for all $C=C^i_j,$
\begin{itemize}
\item[(1)] there is no edge $uv$ in $C$ with $d_{u},d_{v} > 2$ then $\mu(G,d) \leq
  4$. 
\item[(2)] all $d_v\leq 3$ and there is no path $u_1u_2u_3u_4$ in $C$ with all
  $d_{u_i}>2$, then $\mu(G,d) \leq 6$. 
\item[(3)] all $d_v\leq 4$ and there is no path $u_1u_2u_3$ in $C$ with all
  $d_{u_i}>2$, then $\mu(G,d) \leq 8$. 
\item[(4)] all $d_v\leq 5$ and there is no path $u_1u_2u_3$ in $C$ with all
  $d_{u_i}>2$, then $\mu(G,d) \leq
  10$. 
\end{itemize}
\end{thm}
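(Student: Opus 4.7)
The plan is to reduce the theorem to the cycle case handled by Lemma~\ref{lemma:manyCycles}, via two nested layers of decomposition: first the block decomposition of $G$, and then the cycle decomposition of each 2-connected block. Both reductions are driven by Lemma~\ref{lemma:s6l1}.

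First I would handle the block decomposition. The maximal 2-connected subgraphs $G_1, \ldots, G_l$ of $G$ meet only in single cut vertices; iteratively peeling off one leaf block at a time and applying the single-vertex case of Lemma~\ref{lemma:s6l1} yields
\[
\mu(G, d) \;=\; \max\bigl(2,\ \mu(G_1, d_{V(G_1)}),\ \ldots,\ \mu(G_l, d_{V(G_l)})\bigr).
\]
So it suffices to bound $\mu(G_i, d_{V(G_i)}) \leq B$ for each block $G_i$, where $B \in \{4,6,8,10\}$ is the bound dictated by the applicable hypothesis (1)--(4).

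Next I would fix a block $G_i$ with cycle decomposition $C_1, \ldots, C_k$ (writing $k = k_i$) and induct on $k$. The base case $k = 1$ is immediate from Lemma~\ref{lemma:manyCycles}. For the inductive step, let $H = C_1 \cup \cdots \cup C_{k-1}$. By the defining property of a cycle decomposition, $H \cap C_k$ is a single edge $uv$ of $G_i$, so the edge case of Lemma~\ref{lemma:s6l1} gives
\[
\mu(G_i, d_{V(G_i)}) \;=\; \max\bigl(2,\ \mu(H, d_{V(H)}),\ \mu(C_k, d_{V(C_k)})\bigr).
\]
A short sub-induction on $k$ shows that $H$ is itself a 2-connected ring graph whose cycle decomposition is $C_1, \ldots, C_{k-1}$; here one uses that attaching a cycle along an edge to a 2-connected graph keeps it 2-connected. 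Since the hypotheses (1)--(4) are local properties of each cycle $C_j^i$ in the decomposition, they restrict to $H$ and to $C_k$ unchanged. The inductive hypothesis then bounds $\mu(H, d_{V(H)}) \leq B$, and Lemma~\ref{lemma:manyCycles} bounds $\mu(C_k, d_{V(C_k)}) \leq B$, completing the induction.

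There is no single technically hard step; the real content has already been packaged in Lemma~\ref{lemma:s6l1}, Lemma~\ref{lemma:manyCycles}, and the very definition of a cycle decomposition. The only mild obstacle is verifying that the sub-object $H$ obtained by dropping the last cycle is itself a 2-connected ring graph with an inherited cycle decomposition, so that the inductive hypothesis genuinely applies. Everything else is just bookkeeping that assembles the two lemmas into a two-level recursion on blocks and then on cycles within each block.
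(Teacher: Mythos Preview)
Your proposal is correct and matches the paper's approach exactly: the paper's proof is the single sentence ``This follows directly from Lemma~\ref{lemma:pathFree}, Lemma~\ref{lemma:s6l1}, and Lemma~\ref{lemma:manyCycles},'' and you have simply spelled out the two-level recursion (block decomposition via the vertex case of Lemma~\ref{lemma:s6l1}, then cycle decomposition within each block via the edge case) that this sentence encodes. The one lemma you do not cite, Lemma~\ref{lemma:pathFree}, is already invoked inside the proof of Lemma~\ref{lemma:manyCycles}, so its appearance in the paper's list is a convenience rather than an additional ingredient.
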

\begin{proof}
  This follows directly from Lemma~\ref{lemma:pathFree}, Lemma~\ref{lemma:s6l1},
  and Lemma~\ref{lemma:manyCycles}.
\end{proof}

\begin{defn}
  A graph $G$ is \emph{Markov slim}, if for every independent set $I$ of $G$ the
  model with $d_v \geq 2$ for $v\in I$ and $d_v=2$ for $v\in V(G)\setminus I$ has
  Markov width at most four.
\end{defn}

\begin{thm}\label{thm:outerplanar}
  The maximal minor-closed class of Markov slim graphs is the outerplanar graphs.
\end{thm}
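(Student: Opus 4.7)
The plan has three parts: first verify that outerplanar graphs are Markov slim and form a minor-closed family, then show that the two minimal forbidden minors for outerplanarity are not Markov slim, and finally combine these to conclude that no strictly larger minor-closed family can consist entirely of Markov slim graphs.

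Let $G$ be outerplanar and let $I \subseteq V(G)$ be an independent set. Define $d$ by $d_v \geq 2$ for $v \in I$ and $d_v = 2$ for $v \in V(G)\setminus I$. Since outerplanar graphs are ring graphs (stated between Lemma~\ref{lemma:manyCycles} and Definition~\ref{def:s6main} if we track the earlier remark that outerplanar graphs exclude $K_4$ and $K_{2,3}$ and hence are ring graphs), $G$ admits the decomposition required by Theorem~\ref{thm:s6main}. Because $I$ is independent, every edge of $G$ has at least one endpoint outside $I$, and that endpoint has $d_v = 2$. Therefore no edge $uv$ of any constituent cycle $C^i_j$ satisfies $d_u, d_v > 2$, which is exactly the hypothesis of Theorem~\ref{thm:s6main}(1). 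We conclude $\mu(G,d) \leq 4$, proving that $G$ is Markov slim. Since outerplanarity is equivalent to forbidding $K_4$ and $K_{2,3}$ as minors, the class of outerplanar graphs is minor-closed.

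Next I would verify that both $K_4$ and $K_{2,3}$ fail to be Markov slim. For $K_4$, choose $I = \emptyset$ and $d = \mathbf{2}$; Lemma~\ref{lem:smallgraphs}(5) gives $\mu(K_4,\mathbf{2}) = 6 > 4$. For $K_{2,3}$ with bipartition $\{1,2\} \mathop{\dot\cup} \{3,4,5\}$, the set $I = \{1,2\}$ is independent; setting $d_1=d_2=3$ and $d_3=d_4=d_5=2$ satisfies the Markov slim template, and Lemma~\ref{lem:smallgraphs}(4) yields $\mu(K_{2,3},d) = 6 > 4$. So neither $K_4$ nor $K_{2,3}$ is Markov slim.

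Finally, observe that the union of any family of minor-closed classes is again minor-closed, so there is a unique inclusion-maximal minor-closed class $\mathcal{C}$ whose members are all Markov slim. By the first paragraph, $\mathcal{C}$ contains every outerplanar graph. Suppose for contradiction that $\mathcal{C}$ contains some non-outerplanar graph $H$. Then $H$ has $K_4$ or $K_{2,3}$ as a minor; since $\mathcal{C}$ is minor-closed, $\mathcal{C}$ contains $K_4$ or $K_{2,3}$, contradicting the second paragraph. Hence $\mathcal{C}$ consists exactly of the outerplanar graphs, which is the claim.

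The main obstacle, which is already handled by the machinery assembled earlier, is the Markov slimness of outerplanar graphs; it rests on the nontrivial degree bound of Theorem~\ref{thm:s6main}(1) together with the fact that outerplanar graphs are ring graphs. Once that is in hand, the rest of the proof is a short forbidden-minor argument.
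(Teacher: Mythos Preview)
Your proof is correct and follows essentially the same approach as the paper: use Theorem~\ref{thm:s6main}(1) together with the fact that outerplanar graphs are ring graphs to get Markov slimness, then invoke Lemma~\ref{lem:smallgraphs}(4) and~(5) to rule out $K_4$ and $K_{2,3}$, and conclude via the forbidden-minor characterization of outerplanarity. Your write-up is in fact more explicit than the paper's in spelling out why an independent set $I$ forces condition~(1) of Theorem~\ref{thm:s6main} to hold (no edge can have both endpoints in~$I$), and in making the maximality argument precise.
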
 
\begin{proof}
  By Theorem~\ref{thm:s6main} the outerplanar graphs are Markov slim since they
  are ring graphs.  Say that there is a minor closed class larger than the
  outerplanar graphs, in which every graph is Markov slim.  Then this class
  either contains $K_4$ or~$K_{2,3}$. By parts (4) and (5) of
  Lemma~\ref{lem:smallgraphs} neither $K_{4}$ nor $K_{2,3}$ are Markov slim.
\end{proof}

Repeated toric fiber products of cycles reduce computations of the Markov width to
the three cycle.  Therefore the following conjecture seems natural.

\begin{conj}
Let $C$ be a cycle of length $n$, with edges $12, 23, \ldots, n1$.  Then the Markov width $\mu(C,d)$ equals
$$
\max_{i = 1, \ldots, n}  \mu(K_{3}, (d_{i}, d_{i+1}, d_{i+2}))
$$
where the indices $i, i+1, i+2$ are considered cyclically modulo~$n$.
\end{conj}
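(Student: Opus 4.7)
The conjecture is an equality, so both directions need proof; I would induct on the cycle length $n$, with $n=3$ tautological.

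For the lower bound $\mu(C,d) \geq \mu(K_3,(d_i,d_{i+1},d_{i+2}))$, the naive approach is to lift a maximum-degree minimal generator $f$ of $I_{K_3,(d_i,d_{i+1},d_{i+2})}$ to $I_{C,d}$ by freezing the outside coordinates $j_\ell$ to constants $j_\ell^\ast$ for $\ell\notin\{i,i+1,i+2\}$. The lift does lie in $I_{C,d}$, but turns out to be reducible: with outside coordinates frozen, every edge $(\ell,\ell\pm 1)$ not meeting $\{i,i+1,i+2\}$ contributes only single-variable marginals, so the fiber of the lift coincides with the fiber in the path model on $\{i,i+1,i+2\}$, which is decomposable with quadric Markov basis. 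To obtain an indispensable binomial of the required degree one must let some outside coordinate vary within the binomial in a cycle-aware, balanced fashion; small cases, for instance the degree-$6$ generator that must exist for $C_4$ with $d=(2,2,3,3)$, should suggest the correct combinatorial form. Indispensability can then be verified by exhibiting two distinct connected components of the relevant fiber under all moves of smaller degree.

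For the upper bound $\mu(C,d) \leq \max_i \mu(K_3,(d_i,d_{i+1},d_{i+2}))$, one would like to reduce to shorter cycles via a toric fiber product. The natural chord-addition decomposition, $I_{C\cup\{i,i+2\},d} = I_{K_3,(d_i,d_{i+1},d_{i+2})} \tfp I_{C_{n-1},d'}$ from Lemma~\ref{lemma:s6l1}, has two defects. First, the inclusion $I_{C\cup\{i,i+2\},d}\subsetneq I_{C,d}$ together with the non-monotonicity of $\mu$ under ideal inclusion prevent a bound on the chord-added ideal from transferring to $I_{C,d}$. Second, $C_{n-1}$ on $V\setminus\{i+1\}$ has new consecutive triangles, such as $(i-1,i,i+2)$ and $(i,i+2,i+3)$, whose $d$-triples need not appear among the consecutive triples of $C_n$, so the induction does not close without further control.

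What appears necessary instead is a direct analysis of minimal Markov basis elements of $I_{C,d}$, showing that each binomial localizes to some consecutive triple $\{i,i+1,i+2\}$ and has degree bounded by $\mu(K_3,(d_i,d_{i+1},d_{i+2}))$. The main obstacle is the absence of a clean fiber-product splitting in this paper suited to the general case: Theorem~\ref{thm:slowvarying} and Lemma~\ref{lemma:s6l2} require $d_v = 2$ at the splitting vertices, and applying Theorem~\ref{thm:cpp} in higher codimension demands verifying the compatible projection property, which appears essentially equivalent to the conjecture itself. A plausible route is a tableau-based combinatorial argument in the spirit of Section~\ref{sec:generators} that factors any minimal generator of $I_{C,d}$ through the moves of one of its consecutive triangles together with cycle-wrap quadrics and quartics of the form appearing in Example~\ref{ex:hillar}, but making this rigorous for arbitrary $d$ likely requires new ideas beyond the techniques developed here.
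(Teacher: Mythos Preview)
The statement is a \emph{conjecture} in the paper, not a theorem; the paper offers no proof. Immediately after stating it, the authors explain exactly the limitation you identified: their results ``only work with codimension one toric fiber products, which do not raise the degree of generators in the cycle case, and hence we always glued paths at a pair of vertices $u,v$ where $d_{u} = d_{v} = 2$. It is not clear whether or not this remains true for larger values of $d_{u}, d_{v}$.''

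Your proposal is not a proof but an honest assessment of the obstacles, and in that sense it matches the paper's own position. Your diagnosis of why the chord-addition decomposition fails to close the induction (new triples $(i-1,i,i+2)$ etc.\ not among the original consecutive triples) and why Theorem~\ref{thm:slowvarying} and Lemma~\ref{lemma:s6l2} are unavailable (they require $d_u=d_v=2$ at the split) is accurate and is precisely the gap the authors acknowledge. Your observation that the na\"ive frozen-coordinate lift for the lower bound produces a reducible binomial is also correct and is a genuine subtlety not discussed in the paper. So there is nothing to compare: you have correctly recognized an open problem and articulated why the paper's machinery does not resolve it.
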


Our results so far only work with codimension one toric fiber products, which do
not raise the degree of generators in the cycle case, and hence we always glued
paths at a pair of vertices $u,v$ where $d_{u} = d_{v} = 2$.  It is not clear
whether or not this remains true for larger values of $d_{u}, d_{v}$.


\subsection{Binary series-parallel graphs}\label{sec:seriespar}

To prove Theorem~\ref{thm:kral} we apply a classical decomposition of
$K_{4}$-minor free graphs.
\begin{defn}
  The class $SP$ of \emph{connected series-parallel graph} is the smallest
  collection of graphs satisfying the following properties.
\begin{itemize}
\item Each graph $G \in SP$ has two distinguished vertices, the top and the bottom
  vertex, which are different.
\item The graph $K_{2}$ is in~$SP$.
\item If $G_{1}$ and $G_{2}$ are in $SP$ with tops and bottoms $t_{1}, t_{2}$,
  $b_{1}, b_{2}$ respectively, then
\begin{description}
\item[Series construction] the graph obtained from $G_{1}$ and $G_{2}$ by
  identifying $t_{1}$ and $b_{2}$ and calling $b_{1}$ and $t_{2}$ the new bottom
  and top also belongs to $SP$;
\item[Parallel construction] the graph obtained form $G_{1}$ and $G_{2}$ by
  identifying $t_{1}$ and $t_{2}$ and $b_{1}$ and $b_{2}$ (and calling these the
  new top and bottom) is also in~$SP$.
\end{description}  
\end{itemize}
\end{defn}

In a graph without $K_{4}$-minors, every 2-connected component is a
series-parallel graph (see \cite[Chapter~7]{Diestel2006}).  Since gluing two
graphs at a vertex is a codimension zero toric fiber product, to prove
Theorem~\ref{thm:kral}, we can restrict to series-parallel graphs.  One tool is
the following lemma about choices that can be made in the parallel construction.

\begin{lemma}\label{lem:parallel}
  Suppose that $G \in SP$ has at least four vertices.  Then $G$ can be obtained by
  series or parallel construction from two graphs $G_{1}$ and $G_{2}$ each with
  fewer vertices than~$G$.
\end{lemma}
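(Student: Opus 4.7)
The plan is to proceed by strong induction on the number of series and parallel operations used to construct $G$, which is equivalent to induction on $|V(G)|$. The assumption $|V(G)|\geq 4$ forces $G\neq K_{2}$, so by the recursive definition of $SP$, $G$ arises from two SP graphs $A,B$ as either a series composition $G=A\cdot B$ or a parallel composition $G=A\parallel B$. I would case-split on this top-level composition and exploit associativity of the parallel composition in the delicate case.

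The easy cases follow from vertex counting. In the series case, $A$ and $B$ share a single identified vertex, so $|V(G)|=|V(A)|+|V(B)|-1$; since $|V(A)|,|V(B)|\geq 2$, both are at most $|V(G)|-1$ and we set $G_{1}=A,\ G_{2}=B$. In the parallel case with both $|V(A)|,|V(B)|\geq 3$, the identification of two pairs of terminal vertices gives $|V(G)|=|V(A)|+|V(B)|-2\geq\max(|V(A)|,|V(B)|)+1$, so both pieces are strictly smaller than $G$ and again $G_{1}=A,\ G_{2}=B$ works.

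The delicate situation is a parallel composition with one piece equal to $K_{2}$, say $B=K_{2}$, in which case $|V(A)|=|V(G)|\geq 4$. Here I would apply the inductive hypothesis to $A$, which is constructed in strictly fewer operations, to obtain a decomposition $A=A_{1}\star' A_{2}$ with $|V(A_{i})|<|V(A)|=|V(G)|$. If $\star'$ is parallel, associativity of parallel composition rewrites $G=(A_{1}\parallel A_{2})\parallel K_{2}=A_{1}\parallel(A_{2}\parallel K_{2})$, and then $G_{1}:=A_{1}$ together with $G_{2}:=A_{2}\parallel K_{2}$ give two pieces with $|V(A_{1})|$ and $|V(A_{2})|$ vertices respectively, both strictly less than $|V(G)|$. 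If $\star'$ is series, I would iterate the argument by descending into the construction tree of $A$ until a parallel node is reached into which the auxiliary $K_{2}$ can be absorbed.

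The hard part will be verifying the termination and correctness of this descent in the series subcase. One must track how the top and bottom vertices of the auxiliary $K_{2}$ identify with the terminal vertices at each level of the recursive decomposition and ensure that the iteration produces a bona fide decomposition of $G$ with both constituents smaller. Finiteness of the SP construction tree guarantees termination, and careful bookkeeping of the vertex identifications at each rebracketing step should complete the argument.
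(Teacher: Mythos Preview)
Your overall framing matches the paper's: induction on the construction, the only delicate case being $G=K_{2}\parallel A$ with $|V(A)|=|V(G)|$, and your parallel subcase (absorbing the extra $K_{2}$ by associativity) is essentially what the paper does as well.

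The gap is in your series subcase. When $A=A_{1}\cdot A_{2}$, the auxiliary edge $K_{2}$ joins the bottom of $A_{1}$ to the top of $A_{2}$; neither endpoint pair lies inside a single $A_{i}$, so the edge cannot be ``pushed down'' through the series node into either subtree. Your proposed descent to a deeper parallel node therefore never gets started: at a series node there is no rebracketing that keeps the $K_{2}$ attached to the terminals of a proper subgraph. (Concretely, if $A$ is a path---an all-series construction tree---then $G$ is a cycle and there is no parallel node to reach at all.)

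The paper resolves this subcase in one step rather than by descent: it relabels terminals. With $A=A_{1}\cdot A_{2}$ and $m$ the shared middle vertex, rewrite
\[
G=K_{2}\parallel(A_{1}\cdot A_{2})=(K_{2}\cdot A_{1})\parallel A_{2},
\]
where now the parallel is taken with respect to the terminal pair $\{m,t\}$ rather than $\{b,t\}$. Choosing $A_{2}$ to be the piece with at least three vertices (one of the two must have this, since $|V(A)|\ge 4$), one gets $|V(K_{2}\cdot A_{1})|=|V(A_{1})|+1\le |V(G)|-1$ and $|V(A_{2})|\le |V(G)|-1$, so both constituents are strictly smaller. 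The paper explicitly notes that this requires rearranging tops and bottoms, which is harmless since the lemma only asks for \emph{some} series or parallel decomposition into smaller SP graphs. Replacing your descent with this relabeling step completes the argument.
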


\begin{proof}
  The series construction of $G_{1}$ and $G_{2}$ clearly produces a graph $G$ with
  a larger number of vertices.  For the parallel construction, if both $G_{1}$ and
  $G_{2}$ are not single edges then their parallel construction has more vertices
  than either $G_{1}$ or $G_{2}$.  The only non-trivial case is when one of the
  two graphs, say $G_{1}$, is a single edge.

  We can assume $G_{2}$ is neither a path of one or two edges, nor $K_{3}$, since
  then the resulting graph would have less than three vertices.  The graph $G_{2}$
  is obtained either by a series or by a parallel construction from two graphs
  $G_{3}$ and~$G_{4}$.  In the case of a parallel construction, consider new
  graphs $\tilde{G_{3}}$ and $\tilde{G_{4}}$ with an edge glued in from $t$ to $b$
  in both cases.  The resulting parallel construction of $\tilde{G_{3}}$ and
  $\tilde{G_{4}}$ gives the same graph as the parallel construction of $G_{1}$ and
  $G_{2}$.  In the case of a series construction, one of the graphs $G_{3}$ or
  $G_{4}$ has $\geq 3$ vertices.  Assume that graph is $G_{4}$.  A series
  construction of $G_{1}$ with $G_{3}$ followed by a parallel construction of the
  result with $G_{4}$ gives the original graph.  We may have to rearrange the tops
  and bottoms during this construction, but doing so does not change the property
  of being a series-parallel graph.
\end{proof}

\begin{thm}\label{thm:series-parallel}
  If $G$ is a connected series-parallel graph with top $t$ and bottom~$b$, then
  $\mu(G, {\bf 2}) = 4$ and a Markov basis of $I_{G, {\bf 2}}$ can be chosen to
  consist of:
\begin{itemize}
\item[(1)] Degree four binomials whose terms have the same degree
on the $bt$ subcomplex.
\item[(2)] Degree two binomials that are slow-varying on the $bt$ subcomplex.
\end{itemize}
\end{thm}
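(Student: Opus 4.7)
The plan is induction on the number of vertices of $G$.  The base case $G = K_2$ has $I_{K_2,{\bf 2}} = 0$ since $A_{K_2,{\bf 2}}$ is the identity matrix on $\D_{\{b,t\}}$, and the two series-parallel graphs on three vertices (a two-edge path and a triangle) are handled directly from Lemma~\ref{lem:cones} and Lemma~\ref{lem:smallgraphs}.  For $G$ with at least four vertices, Lemma~\ref{lem:parallel} writes $G$ as a series or parallel construction of smaller series-parallel graphs $G_1, G_2$ to which the inductive hypothesis applies.

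In the series case $G$ is obtained by identifying $t_1 = b_2 = v$, so $b = b_1$ and $t = t_2$ lie in different halves of $G$ and are non-adjacent.  By Proposition~\ref{prop:gammatfp} this is a codimension zero toric fiber product whose Markov basis is $\Lift(\calf_1) \cup \Lift(\calf_2) \cup \Quad$ with $\calf_i$ satisfying (1) and (2) by induction.  Elements of $\Quad$ are quadratic and hence slow-varying on $\{b,t\}$ by Proposition~\ref{prop:slowvaryingcheck}.  For a lifted element of $\calf_1$ of degree four, the inductive hypothesis gives the same $\{b_1,t_1\}$-subcomplex degree, which in particular forces the same $b_1$-marginal; since the lift holds the $t_2$-coordinate constant across the two monomials, the $t_2$-marginal is trivially unchanged, and because the $\{b_1,t_2\}$ subcomplex of $G$ consists of two isolated vertices its grading is determined by these two marginals, yielding condition (1).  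Degree-two lifts are slow-varying automatically, and lifts from $\calf_2$ are handled symmetrically.

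In the parallel case, $t_1 = t_2 = t$ and $b_1 = b_2 = b$ are identified.  If neither $G_i$ contains the edge $bt$ then the overlap is the isolated pair $\{b,t\}$ and we have a codimension one toric fiber product.  By induction all elements of $\calf_i$ are slow-varying on $\{b,t\}$, so Theorem~\ref{thm:slowvarying} applies and gives the Markov basis $\calh \cup \Glue(\calf_1, \calf_2)$.  By Proposition~\ref{prop:gammaassoc0}, $\calh$ is a Markov basis of the codimension zero toric fiber product $I_{\tilde G_1,{\bf 2}} \tfp I_{\tilde G_2,{\bf 2}}$ at the edge $bt$ with $\tilde G_i = G_i \cup \{bt\}$; each $\tilde G_i$ is a series-parallel graph on the same vertex set as $G_i$, and hence has fewer vertices than $G$, so by induction its Markov basis satisfies (1) and (2) relative to the now edge-shaped $bt$ subcomplex, where both conditions collapse to preservation of the full $(b,t)$-joint degree.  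The lifts in $\calh$ and the quadratic moves in $\Quad$ preserve this joint degree and hence both marginals, giving (1) or (2) in $G$.  A direct tableau computation shows that gluing two compatible quadratics with $\gamma_i = \pm\bfh$ produces a quadratic in $z$ with the same projection, satisfying (2).  If instead $bt$ is an edge in some $G_i$, it is also an edge in $G$, and we rewrite $G = G_1 \cup \tilde G_2$ by adding the edge to whichever $G_i$ did not already contain it; this yields a codimension zero toric fiber product at the edge $bt$ and the same analysis applies.

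The main obstacle is tracking the $bt$ subcomplex across the induction, since it can be either an edge or a pair of isolated vertices depending on the stage.  The crucial observation that closes the argument is that in the edge case both conditions (1) and (2) reduce to preservation of the full joint $(b,t)$-degree, while in the isolated case they speak only of the separate marginals; since lifting respects overlap coordinates and glue in the codimension one setting restores exactly the kernel direction $\pm\bfh$, the slow-varying structure transitions correctly between the two regimes.
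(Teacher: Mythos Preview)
Your proof is correct and follows the same inductive scaffold as the paper: the base cases, the appeal to Lemma~\ref{lem:parallel}, and the series/parallel split.  The parallel case is handled essentially identically.  In the series case you take a more direct route than the paper: rather than arguing globally that every quartic is an iterated lift of the $K_3$ generator and then analyzing where $b$ and $t$ can lie relative to that underlying triangle (using that a $K_4$-minor is forbidden), you simply invoke the inductive hypothesis~(1) to control the $b_1$-marginal and use the codimension-zero lift structure (identical $k$-column on both terms) to control the $t_2$-marginal.  Your argument is cleaner and stays local to the induction; the paper's argument is longer but yields the extra structural information that every quartic in the Markov basis literally comes from a triangle.

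One point to tighten in the non-edge parallel case: you verify condition~(2) only for glues of quadratics with $\gamma_i=\pm\bfh$, but $\Glue(\calf_1,\calf_2)$ also contains glues built from compatible pairs with $\gamma_i=0$, in particular from the degree-four elements (which have $\gamma_i=0$ by~(1)).  These glues lie in the ideal of the associated codimension zero product and are redundant modulo~$\calh$; since the theorem only asks that a Markov basis \emph{can be chosen}, you should say this explicitly and discard them.
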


\begin{proof}
  We proceed by induction on the number of vertices of the graph.  The statement
  is trivially true for connected series-parallel graphs with one or two vertices,
  since they have empty Markov basis.  There are two graphs with three vertices to
  consider.  For the triangle $I_{K_{3}, {\bf 2}}$ there is one degree four
  generator and it must project to the zero polynomial along the $bt$ edge, since
  that edge belongs to~$K_{3}$.  In the case of the path with three vertices,
  there are two quadratic generators, which are slow-varying by
  Proposition~\ref{prop:slowvaryingcheck}.

  Now let $G$ be a series-parallel graph with at least four vertices.  By
  Lemma~\ref{lem:parallel} it can be built from two graphs $G_{1}$ and $G_{2}$
  with strictly smaller numbers of vertices by either a series or a
  parallel construction.  We must show that properties $(1)$ and $(2)$ of the
  Markov basis are preserved under either of these constructions.

  First suppose that $G$ is obtained from $G_{1}$ and $G_{2}$ by a series
  construction.  There are three types of generators that arise.  The generators
  are given by:
\begin{description}
\item[Lift 1] lifting generators from $I_{G_1, {\bf 2}}$ while being constant on $G_2$;
\item[Lift 2] lifting generators from $I_{G_2, {\bf 2}}$ while being constant on $G_1$;
\item[Quad] quadratic moves.
\end{description}
Since Lifting preserves degrees we obtain only moves of degree two and four.
Quadratic moves are slow-varying by Proposition~\ref{prop:slowvaryingcheck}, thus
we must show that the degree four moves can be chosen so that their projections on
the $bt$ edge are constant.  The crucial idea is that the degree four generators
all come from three-cycles, since 
we are always only using series or parallel construction.  The quartic generator
for $I_{K_{3}, {\bf 2}}$ is
\[
\begin{bmatrix}
1 & 1 & 1 \\
1 & 2 & 2 \\
2 & 1 & 2 \\
2 & 2 & 1
\end{bmatrix}
- 
\begin{bmatrix}
1 & 1 & 2 \\
1 & 2 & 1 \\
2 & 1 & 1 \\
2 & 2 & 2
\end{bmatrix}.
\]
Any subsequent appearance of a quartic is a lift of this move in some way and must
be obtained by using a single edge or vertex in $K_{3}$ and performing a sequence
of lifts.  The pair $bt$ cannot go from an added vertex to the third vertex of the
underlying $K_{3}$, otherwise we would be able to construct graphs that have
$K_{4}$ as a minor.  Thus, $b$ and $t$ belong to the gluing edge, or a subset of
the lifted vertices.  However, by construction of the lift operation, the binomial
projects to zero when restricted to such a subset of vertices.

If $G$ is obtained from a parallel construction of $G_1$ and $G_2$, then the top
and bottom vertices can be adjacent or not. If they are adjacent, then we are
gluing along an edge.  All generators of $I_{G_{1}, {\bf 2}}$ and $I_{G_{2}, {\bf
    2}}$ project to zero along this edge by properties of the lift operation.  If
the special vertices are not adjacent, we have a codimension one toric fiber
product.  The associated codimension zero product consists of series-parallel
graphs with fewer vertices.  By the argument in the preceding paragraphs, all
Markov basis elements obtained from the associated codimension zero toric fiber
product satisfies either (1) or~(2).  Finally, consider ${\GGlue}(\calf, \calg)$.
Since all Markov bases satisfy (1) and (2), we only ever glue quadrics, producing
more quadrics, which are slow-varying by Proposition~\ref{prop:slowvaryingcheck}.
\end{proof}

Instead of using binary variables for the triangle in the proof, one could have
used larger values of $d_{v}$ on the vertex of the triangle that is never involved
in gluing or identification. This would have given an alternative but less
descriptive proof of Theorem~\ref{thm:s6main}.  The procedure yields a larger
class than ring graphs, but it is not true that larger $d_{v}$ on independent sets
always produce Markov width four, as illustrated earlier by the fact that
$K_{2,3}$ is not Markov slim.

There are further applications of higher codimension toric fiber products in
algebraic statistics lurking.  For example, ideals of graph
homomorphisms~\cite{Engstrom2010} generalize classes of toric ideals in algebraic
statistics. Given graphs $G$ and $H$, potentially with loops, the ideal of graph
homomorphisms from $G$ to $H$ is $I_{G \rightarrow H}$.  In this language, binary
hierarchical models arise as the special case where $H = K_{2}^{o}$ is the
complete graph with loops.  If $H$ is an edge with one loop, then the
homomorphisms from $G$ to $H$ correspond to the independent sets of~$G$. It is
known that $I_{G \rightarrow H}$ is quadratically generated if $G$ is bipartite,
or becomes bipartite after the removal of one vertex~\cite{Engstrom2010}. Using
Theorem~\ref{thm:series-parallel} as a template, one derives that $I_{G
  \rightarrow H}$ is quadratically generated for series-parallel~$G$.
 
Some toric ideals are not toric fiber products themselves, but project to one.
With control over the projection one may be able to find a Markov basis anyway.
An Example is Nor\'en's proof of a conjecture by Haws, Martin del Campo, Takemura,
and Yoshida~\cite{Noren2012}.


\section{Application: conditional independence ideals}\label{sec:ci}
A basic problem in the algebraic study of conditional independence is to
understand primary decompositions of CI-ideals.  For instance, if a conditional
independence model comes from a graph, the minimal primes provide information
about families of probability distributions that satisfy the
conditional independence constraints but do not factorize according to the graph.
Moreover, primary decompositions can provide information about the connectivity of
random walks using Markov subbases~\cite{KRS12}.

In this section $J$ is the generic letter denoting an ideal.  This is to avoid
confusion between the ideals $I_{G}$ of Section~\ref{sec:markov} and the CI-ideals
$J_{G}$ in Section~\ref{sec:graphical-ci-models}.  The results in this section are
independent of $d = (d_{v})_{v\in V}$, the vector of cardinalities.  It is fixed
arbitrarily and does not appear in the notation.

Assume $\mm$ is a conditional independence model and $J_{\mm}$ its CI-ideal.  Our
goal is to describe conditions under which there exist suitable conditional
independence models $\mm_{1}$ and $\mm_{2}$ such that
\begin{equation*}
  J_{\mm} = J_{\mm_{1}} \tfp J_{\mm_{2}}.
\end{equation*}
When $J_{\mm}$ is as a toric fiber product, the results of
Section~\ref{sec:pers-prim-decomp} yield a primary decomposition of $J_{\mm}$ from
primary decompositions of $J_{\mm_{1}}$ and $J_{\mm_{2}}$, greatly reducing the
necessary computational efforts.  This seems to work best in the case of
codimension zero toric fiber products.  At this moment it is not clear if there is
a use for higher codimension toric fiber products in analyzing conditional
independence models.

We first develop a general theory for arbitrary conditional independence models.
Then we apply it to global Markov ideals of graphs, showing that they are toric
fiber products if the graph has a decomposition along a clique.

We assume the same setup as in Section~\ref{sec:2:hierarchical} for hierarchical
models.  Let $A,B,C$ be three pairwise disjoint subsets of $V$, and $D\defas
V\setminus (A\cup B \cup C)$.  If $D\neq \emptyset$, then $p_{i_{A}i_{B}i_{C}+}
\defas \sum_{i_{D} \in \D_{D}} p_{i_{A}i_{B}i_{C}i_{D}}$.  The~\emph{conditional
  independence (CI) ideal} $J_{\ind{A}{B}[C]}$ is
\begin{equation*}
  J_{\ind{A}{B}[C]} \defas \< p_{i_{A}i_{B}i_{C}+}p_{j_{A}j_{B}i_{C}+} - p_{i_{A}j_{B}i_{C}+}p_{j_{A}i_{B}i_{C}+}
  : i_{A},j_{A} \in \D_{A}, i_{B},j_{B}\in \D_{B}, i_{C}\in \D_{C}\>.
\end{equation*}
An argument similar to that in Section~\ref{sec:introCI} shows that this ideal is
prime. For a collection
\[
\mm = \{\ind{A_{1}}{B_{1}}[C_{1}], \ldots, \ind{A_{r}}{B_{r}}[C_{r}] \}
\]
of CI-statements, the CI-ideal is the sum of the ideals of its statements:
\[
J_{\mm}  =  J_{\ind{A_{1}}{B_{1}}[C_{1}]} + \cdots + J_{\ind{A_{r}}{B_{r}}[C_{r}]}.
\]

In statistics one is usually not interested in all of the variety of a CI-ideal,
but only its intersection with the set of probability distributions. The following
properties of CI-ideals imply well-known properties of conditional
independence.

\begin{prop}\label{prop:CI-inference}
  The following ideal containments hold:
  \begin{itemize}
  \item $J_{\ind{A}{B}[C]} = J_{\ind{B}{A}[C]}$ (symmetry);
  \item $J_{\ind{A}{B\cup D}[C]} \supset J_{\ind{A}{B}[C]}$ (decomposition);
  \item $J_{\ind{A}{B\cup D}[C]} \supset J_{\ind{A}{B}[C\cup D]}$ (weak union).
  \end{itemize}
\end{prop}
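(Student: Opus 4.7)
The plan is to unwind the definitions of the three CI-ideals and verify each containment directly from the generators. Write $E = V \setminus (A\cup B\cup C\cup D)$ where $D$ is the subset appearing in the statement (for symmetry there is no $D$). Recall that $J_{\ind{A}{B}[C]}$ is generated by the $2\times 2$ minors of the $\D_A \times \D_B$ matrices (one for each $i_C$) whose entries are the marginals $p_{i_Ai_Bi_C+}$ summed over all coordinates outside $A\cup B\cup C$.

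First I would verify symmetry. Exchanging the roles of $A$ and $B$ transposes each conditional slice matrix, and $2\times 2$ minors of a matrix agree up to sign with those of its transpose. Hence the generating sets of $J_{\ind{A}{B}[C]}$ and $J_{\ind{B}{A}[C]}$ coincide as sets of polynomials (up to sign), giving the equality of ideals.

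Next I would prove decomposition. The key observation is that the marginal used for $J_{\ind{A}{B}[C]}$ further sums the marginal used for $J_{\ind{A}{B\cup D}[C]}$ over $i_D\in\D_D$, i.e.~$p_{i_Ai_Bi_C+} = \sum_{i_D\in\D_D} p_{i_Ai_Bi_Ci_D+}$. Substituting this into a generator of $J_{\ind{A}{B}[C]}$ and expanding by bilinearity yields
\begin{equation*}
\sum_{i_D,j_D\in\D_D}\bigl(p_{i_Ai_Bi_Ci_D+}\,p_{j_Aj_Bi_Cj_D+} - p_{i_Aj_Bi_Ci_D+}\,p_{j_Ai_Bi_Cj_D+}\bigr),
\end{equation*}
and each summand is a generator of $J_{\ind{A}{B\cup D}[C]}$ after interpreting $(i_B,i_D)$ and $(j_B,j_D)$ as elements of $\D_{B\cup D}$. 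Hence the generator lies in $J_{\ind{A}{B\cup D}[C]}$.

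Finally, for weak union, both ideals involve the same marginal (summed over $E$), so I would match generators directly. A generator of $J_{\ind{A}{B}[C\cup D]}$ has the form $p_{i_Ai_Bi_Ci_D+}p_{j_Aj_Bi_Ci_D+} - p_{i_Aj_Bi_Ci_D+}p_{j_Ai_Bi_Ci_D+}$, which is precisely the generator of $J_{\ind{A}{B\cup D}[C]}$ obtained by fixing the $D$-coordinate on both sides, i.e.~taking $i_D = j_D$ among the $B\cup D$-indices. This exhibits each generator of the right-hand side inside the left-hand side. The only real obstacle throughout is bookkeeping: $D$ plays a different role in each of the three items, and one must be careful that indices in the tableau match up correctly after splitting or merging coordinates, but no deeper argument is needed.
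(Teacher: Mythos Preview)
The paper states this proposition without proof, so there is nothing to compare against; your direct verification from the generators is exactly the kind of argument one would expect, and symmetry and weak union are handled correctly.

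There is, however, a genuine slip in your decomposition argument. Your displayed sum
\[
\sum_{i_D,j_D}\bigl(p_{i_Ai_Bi_Ci_D+}\,p_{j_Aj_Bi_Cj_D+} - p_{i_Aj_Bi_Ci_D+}\,p_{j_Ai_Bi_Cj_D+}\bigr)
\]
is correct as a total, but the individual summand is \emph{not} a $2\times 2$ minor of the $\D_A\times\D_{B\cup D}$ matrix: the negative term pairs row $i_A$ with column $(j_B,i_D)$ and row $j_A$ with column $(i_B,j_D)$, so four distinct columns appear rather than two. Concretely, in a rank-one $2\times 4$ matrix the expression $M_{1,1}M_{2,4}-M_{1,3}M_{2,2}$ need not vanish, so such a term does not lie in the ideal of $2\times 2$ minors. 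The fix is immediate: relabel the dummy variables in the negative term (swap $i_D\leftrightarrow j_D$) before grouping, obtaining
\[
\sum_{i_D,j_D}\bigl(p_{i_Ai_Bi_Ci_D+}\,p_{j_Aj_Bi_Cj_D+} - p_{i_Aj_Bi_Cj_D+}\,p_{j_Ai_Bi_Ci_D+}\bigr),
\]
whose summands are honest generators of $J_{\ind{A}{B\cup D}[C]}$ with column indices $(i_B,i_D)$ and $(j_B,j_D)$. With this correction your proof goes through.
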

However, the contraction property does not hold algebraically since
\begin{equation*}
 J_{\ind{A}{B}[C\cup D]} + J_{\ind{A}{D}[C]} \not\supseteq J_{\ind{A}{B\cup D}[C]}.
\end{equation*}
The algebraic structure of $J_{\ind{A}{B}[C\cup D]} +
J_{\ind{A}{D}[C]}$ 
was analyzed systematically in~\cite{Garcia2005}.

\subsection{Toric fiber products of CI-models}
Let $\mm_{1}, \mm_{2}$ be conditional independence models on two (not necessarily
disjoint) sets of variables $V_{1}, V_{2}$, respectively. The CI-ideals
$J_{\mm_{1}}$ and $J_{\mm_{2}}$ live in polynomial rings with variables indexed by
$\D_{V_{1}}$, and $\D_{V_{2}}$, respectively.  Their toric fiber product is again
a CI-ideal when certain conditions are satisfied.  Our aim is to define the toric
fiber product of $J_{\mm_{1}}$ and $J_{\mm_{2}}$ combinatorially, using
CI-statements.

\begin{defn}[The $S$-grading]\label{defn:separator}
  Let $S\subset V$.  The grading on the polynomial ring $\kk[p_{i} : i \in
  \D_{V}]$ given by $\deg(p_{i}) = e_{i_{S}} \in \zz^{\D_{S}}$ is the
  \emph{$S$-grading}. The conditional independence model $\mm$ is
  \emph{$S$-homogeneous} if each statement $\ind{A}{B}[C]$ in $\mathcal{M}$
  satisfies either $S\subseteq (A\cup C)$ or $S \subseteq (B \cup C)$.
\end{defn}

\begin{lemma}\label{lem:sep-combinatorics}
  If $\mm$ is $S$-homogeneous then $J_{\mm}$ is homogeneous in the $S$-grading.
\end{lemma}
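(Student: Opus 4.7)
The plan is to reduce the statement to showing that each individual CI-statement $\ind{A}{B}[C]$ with $S \subseteq A \cup C$ or $S \subseteq B \cup C$ produces an $S$-homogeneous ideal, and then verify this by a direct degree computation on the binomial generators. Since $J_{\mm}$ is the sum of the ideals $J_{\ind{A_i}{B_i}[C_i]}$ for $\ind{A_i}{B_i}[C_i]\in\mm$, and a sum of $S$-homogeneous ideals is $S$-homogeneous, it suffices to treat a single CI-statement.

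Fix a statement $\ind{A}{B}[C]$ in $\mm$, and let $D = V \setminus (A \cup B \cup C)$, so that $A, B, C, D$ partition $V$. The generators of $J_{\ind{A}{B}[C]}$ are the binomials
\[
p_{i_A i_B i_C +}\, p_{j_A j_B i_C +} \;-\; p_{i_A j_B i_C +}\, p_{j_A i_B i_C +},
\]
where each factor expands as $p_{i_A i_B i_C +} = \sum_{i_D \in \D_D} p_{i_A i_B i_C i_D}$. The key point is that every monomial appearing after full expansion of either the leading or trailing term is of the form $p_{i_A i_B i_C i_D} \cdot p_{j_A j_B i_C j_D}$ (possibly with $i_B,j_B$ swapped, and with the $D$-coordinates running over all of $\D_D$), so the multisets $\{i_A, j_A\}$ of $A$-indices and $\{i_C, i_C\}$ of $C$-indices that appear are the same in every monomial.

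By $S$-homogeneity of $\mm$, either $S \subseteq A \cup C$ or $S \subseteq B \cup C$. Consider the first case; then $S \cap B = \emptyset$ and $S \cap D = \emptyset$ since $A,B,C,D$ partition $V$. Hence for any monomial $p_{i_A i_B i_C i_D}\cdot p_{j_A j_B i_C j_D}$ the $S$-degree equals
\[
e_{(i_A i_C)_S} + e_{(j_A i_C)_S},
\]
depending only on the $A$- and $C$-coordinates, which are common to every monomial of both terms of the binomial. Therefore both terms have the same $S$-degree and the generator is $S$-homogeneous. The case $S \subseteq B \cup C$ follows by the symmetric argument (or by invoking $J_{\ind{A}{B}[C]} = J_{\ind{B}{A}[C]}$ from Proposition~\ref{prop:CI-inference}).

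There is essentially no obstacle here beyond bookkeeping: the substantive content is the observation that $S$-homogeneity of the CI-statement forces $S$ to miss either $B$ or $A$ (and also $D$, since $D$ is outside $A \cup B \cup C$), so that the only indices that vary between the two sides of a generator lie outside $S$.
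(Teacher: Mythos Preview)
Your proof is correct and follows essentially the same approach as the paper: reduce to a single CI-statement, observe that the hypothesis forces $S\cap D=\emptyset$ and $S\cap B=\emptyset$ (or symmetrically $S\cap A=\emptyset$), and then verify by direct computation that every monomial in both terms of the generator has the same $S$-degree $e_{i_{A\cap S}i_{C\cap S}}+e_{j_{A\cap S}i_{C\cap S}}$. The only difference is that the paper's proof also records the converse observation (that the generator fails to be $S$-homogeneous otherwise), which is not needed for the lemma as stated.
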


\begin{proof}
Let $D=V\setminus (A\cup B\cup C)$. The polynomial
\[f = p_{i_{A}i_{B}i_{C}+}p_{j_{A}j_{B}i_{C}+} -
p_{i_{A}j_{B}i_{C}+}p_{j_{A}i_{B}i_{C}+}\] is not homogeneous if $S \cap D \neq
\emptyset$ since expressions like $p_{i_{A}i_{B}i_{C}+}$ involve sums over terms
with different $D$-degrees.  Assuming that $S \cap D = \emptyset$, the degree of
all terms in the polynomial $p_{i_{A}i_{B}i_{C}+}p_{j_{A}j_{B}i_{C}+}$ is $e_{i_{A
    \cap S} i_{B\cap S} i_{C \cap S}} + e_{j_{A \cap S} j_{B\cap S} i_{C \cap
    S}}$.  The degree of all terms in $p_{i_{A}j_{B}i_{C}+}p_{i_{A}j_{B}i_{C}+}$
is $e_{i_{A \cap S} j_{B\cap S} i_{C \cap S}} + e_{j_{A \cap S} i_{B\cap S} i_{C
    \cap S}}$.  These two degrees are equal if and only if $S \subseteq A \cup C$
or $S \subseteq B \cup C$.
\end{proof}

\begin{ex}[Homogeneity with respect to the $S$-grading]
  Consider binary random variables $V = \set{1,2,3}$, where $S=\set{1}$. The
  statement $\ind{2}{3}$ is given by the polynomial
  \[
  (p_{111} + p_{211})(p_{122}+p_{222}) - (p_{112} + p_{212})(p_{121}+p_{221})
  \]
  which is not homogeneous in the $S$-grading. In contrast, the polynomial
  for~$\ind{1}{2}$,
  \[
  (p_{111} + p_{112})(p_{221}+p_{222}) - (p_{121} + p_{122})(p_{211}+p_{212}),
  \]
  is homogeneous of multidegree $e_{1} + e_{2}$.  
\end{ex}

The following example shows how redundant statements can seemingly complicate the
situation and why it is advantageous to work with minimal sets of CI-statements
defining a given CI-ideal.  However, solving the conditional independence
implication problem is difficult in general~\cite{Geiger1993}.
\begin{ex}
  The converse of Lemma~\ref{lem:sep-combinatorics} need not hold.
  Consider the ideal $J_{\ind{12}{3}}$, which is $\{1\}$-homogeneous.  By
  Proposition~\ref{prop:CI-inference} it equals the CI-ideal of $\mm =
  \{\ind{12}{3}, \ind{2}{3}\}$ which does not satisfy the combinatorial conditions
  in Lemma~\ref{lem:sep-combinatorics}.  
\end{ex}

Our next goal is to define the toric fiber product of two $S$-homogeneous
conditional independence models $\mm_{1}, \mm_{2}$ where $S = V_{1}\cap V_{2}$.
To this end, consider the statement
\[
  \mathcal{S} = \set {\ind{(V_{1}\setminus S)}{(V_{2}\setminus S)}[S]},
\]
representing a separating property of~$S$.  A second class of statements appearing
in the toric fiber product of $\mm_{1}$ and $\mm_{2}$ comes from joining vertices
in $V_{2}$ to statements in $\mm_{1}$ and vice versa.  By $S$-homogeneity and
symmetry in Proposition~\ref{prop:CI-inference} we can assume that each statement
$\ind{A}{B}[C]$ in $\mm_{1},\mm_{2}$ satisfies $A \cap S = \emptyset$ and define
\begin{equation}
  \label{eq:liftci}
  \begin{gathered}
    \mathcal{L}_{1} = \set{\ind{A}{B\cup(V_{2}\setminus S)}[C] :
      \ind{A}{B}[C]\in\mm_{1}},  \\
    \mathcal{L}_{2} = \set{\ind{A}{B\cup(V_{1}\setminus S)}[C] :
      \ind{A}{B}[C]\in\mm_{2}}.
  \end{gathered}
\end{equation}
The CI-statements in~\eqref{eq:liftci} are constructed so that their ideal
generators are exactly the lifts of ideal generators associated to the statements
in $\mm_{1}$ and $\mm_{2}$.  The straightforward definition of $\Lift$ in the
non-binomial case is contained in~\cite{Sullivant2007}.

\begin{lemma}
  \label{lem:liftinglemma} $ J_{\mathcal{L}_{i}} = \langle \Lift (\mathcal{M}_{i}) \rangle.$
\end{lemma}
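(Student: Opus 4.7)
The plan is to reduce to a single CI-statement and then match generators explicitly. First I would observe that both sides of the claimed equality distribute as sums over the statements in $\mathcal{M}_{1}$:
$J_{\mathcal{L}_{1}} = \sum_{\ind{A}{B}[C]\in\mathcal{M}_{1}} J_{\ind{A}{B\cup(V_{2}\setminus S)}[C]}$
and similarly $\langle \Lift(\mathcal{M}_{1})\rangle$ is the sum over statements of the ideals generated by the lifts of the generators of each $J_{\ind{A}{B}[C]}$. Thus it suffices to prove the lemma for a single statement $\ind{A}{B}[C]\in\mathcal{M}_{1}$, which after applying symmetry (Proposition~\ref{prop:CI-inference}) we may assume satisfies $A\cap S = \emptyset$, so that $S\subseteq B\cup C$ by $S$-homogeneity.

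The central bookkeeping step is to verify that the complementary index set $D_{\text{lifted}} = V\setminus (A\cup (B\cup (V_{2}\setminus S))\cup C)$ appearing in the lifted statement equals the original $D_{1} = V_{1}\setminus (A\cup B\cup C)$. This follows from $A\subseteq V_{1}\setminus S$ together with $S\subseteq B\cup C$: elements of $V_{2}\setminus V_{1}$ are killed by $V_{2}\setminus S$, elements of $S$ are killed by $B\cup C$, and what remains is exactly $D_{1}$. Consequently the marginal sums $q_{i_{A}i_{B}i_{C}}^{k} = \sum_{d\in\D_{D_{1}}} p_{i_{A}i_{B}i_{C}d\,k}$ entering the generators of $J_{\ind{A}{B\cup(V_{2}\setminus S)}[C]}$ are indexed by the same set $D_{1}$ as the marginals $s_{i_{A}i_{B}i_{C}} = \sum_{d\in\D_{D_{1}}} p_{i_{A}i_{B}i_{C}d}$ used in $J_{\ind{A}{B}[C]}$.

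Next I would compare generators directly. The generators of $J_{\ind{A}{B}[C]}$ are the $2\times 2$ minors $s_{i_{A}i_{B}i_{C}}s_{j_{A}j_{B}i_{C}} - s_{i_{A}j_{B}i_{C}}s_{j_{A}i_{B}i_{C}}$ of the matrix with rows in $\D_{A}$ and columns in $\D_{B\cup C}$; the generators of $J_{\ind{A}{B\cup(V_{2}\setminus S)}[C]}$ are the $2\times 2$ minors of the enlarged matrix whose columns are additionally indexed by $\D_{V_{2}\setminus S}$. Now invoke the $\Lift$ operation from Section~\ref{sec:codim0}, extended to the non-binomial setting as in~\cite{Sullivant2007}: after reordering the right-hand monomial of a CI-generator so that positions of matching $S$-degree are paired (i.e.\ $s_{i_{A}i_{B}i_{C}}$ with $s_{j_{A}i_{B}i_{C}}$, and $s_{j_{A}j_{B}i_{C}}$ with $s_{i_{A}j_{B}i_{C}}$), the lift with indices $\alpha,\beta\in\D_{V_{2}\setminus S}$ in the two positions substitutes $\alpha$ for every $i_{B}$-column and $\beta$ for every $j_{B}$-column, producing exactly the $2\times 2$ minor of the enlarged matrix on rows $i_{A},j_{A}$ and columns $(i_{B},\alpha),(j_{B},\beta)$. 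This identification runs in both directions, establishing the two containments.

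The main obstacle is to state the pairing precisely so that $\Lift$ on the determinantal generators is bijective with the $2\times 2$ minors of the larger matrix; this requires a reordering of the monomials on the right-hand side of each CI-generator (which is justified by the $S$-grading pairing) and a verification that every pair $(\alpha,\beta)\in\D_{V_{2}\setminus S}^{2}$ arises, including the diagonal $\alpha=\beta$. Once this notation is set up, both inclusions reduce to matching formulas, with no further algebra needed.
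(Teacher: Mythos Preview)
Your proposal is correct and follows essentially the same approach as the paper's own proof: both arguments identify the generators of $J_{\ind{A}{B\cup(V_2\setminus S)}[C]}$ directly with the lifts of the generators of $J_{\ind{A}{B}[C]}$ by matching indices, using that $S\subseteq B\cup C$ forces the new $V_2\setminus S$ indices to align with the $B$-indices. Your version is more explicit in two places where the paper is terse --- the reduction to a single statement and, more importantly, the verification that the marginalization set $D_{\text{lifted}}$ in the lifted statement coincides with the original $D_1$ --- but the underlying idea and structure are the same.
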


\begin{proof}
  We only show the argument for $\mm_{1}$. Denote $D = V_{2}\setminus S$.  Lifting
  a polynomial
  \begin{equation*}
    p_{i_{A}i_{B}i_{C}+}p_{j_{A}j_{B}i_{C}+} -
    p_{i_{A}j_{B}i_{C}+}p_{j_{A}i_{B}i_{C}+} \in J_{\mm_{1}},
  \end{equation*}
  consists of choosing two configurations $i_{D}, j_{D} \in \D_{D}$, and lifting
  to:
  \begin{equation}
    \label{eq:liftedmarginal}
    q_{i_{A}i_{B}i_{C}i_{D}+}q_{j_{A}j_{B}i_{C}j_{D}+} -
    q_{i_{A}j_{B}i_{C}j_{D}+}q_{j_{A}i_{B}i_{C}i_{D}+} \in \Lift (\mm_{1}),
  \end{equation}
  where $i_{D}, j_{D}$ align with the configurations $i_{B}$ and $j_{B}$ by our
  convention that $S\subset B\cup C$. The lift~\eqref{eq:liftedmarginal}
  originates from one of the statements in $\mathcal{L}_{1}$ and every statement
  there produces generators of the given form.
\end{proof}

\begin{defn}
 The CI-model on $V_{1} \cup V_{2}$ given by all derived statements
  \begin{equation*}
    \begin{gathered}
      \mathcal{M}_{1} \tfps \mathcal{M}_{2} \defas \mathcal{S} \cup
      \mathcal{L}_{1} \cup \mathcal{L}_{2}.
    \end{gathered}
  \end{equation*}
  is the \emph{toric fiber product} of $\mm_{1}$ and $\mm_{2}$ along~$S$.
\end{defn}

\begin{thm} \label{sec:citfp-thm} For $i=1,2$ let $\mm_{i}$ be an $S$-homogeneous
  CI-model where $S=V_{1}\cap V_{2}$. If $\mathcal{A}$ is the linearly independent
  vector configuration representing the $S$-grading, then
  \begin{equation*}
    J_{\mm_{1} \tfps \mm_{2}} = J_{\mm_{1}} \tfp J_{\mm_{2}}.
  \end{equation*}
\end{thm}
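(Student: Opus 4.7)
The plan is to verify that both sides of the asserted equality admit the same explicit binomial generating set, by invoking the codimension zero generator theorem from Section~\ref{sec:codim0}. By Lemma~\ref{lem:sep-combinatorics}, both $J_{\mm_1}$ and $J_{\mm_2}$ are $S$-homogeneous, and by hypothesis $\cala$ is linearly independent, so $J_{\mm_1} \tfp J_{\mm_2}$ has codimension zero. The generator theorem then gives
\[
J_{\mm_1} \tfp J_{\mm_2} \;=\; \langle \Lift(\calf) \cup \Lift(\calg) \cup \Quad \rangle
\]
for any binomial generating sets $\calf, \calg$ of $J_{\mm_1}, J_{\mm_2}$.

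Next I would match this three-part decomposition with the three families of CI-statements defining $\mm_1 \tfps \mm_2 = \mathcal{S} \cup \mathcal{L}_1 \cup \mathcal{L}_2$. For the separator statement $\mathcal{S} = \{\ind{V_1\setminus S}{V_2\setminus S}[S]\}$, the complement $D = V \setminus ((V_1 \setminus S) \cup (V_2 \setminus S) \cup S)$ is empty, so its defining generators are honest quadratic binomials with no marginal sums; under the identification $p_{i_V} \leftrightarrow z^{i_S}_{i_{V_1\setminus S},\, i_{V_2\setminus S}}$ they become exactly the elements of $\Quad$. For the lifted families, Lemma~\ref{lem:liftinglemma} gives $J_{\mathcal{L}_i} = \langle \Lift(\mm_i) \rangle$, matching $\Lift(\calf)$ and $\Lift(\calg)$ for the canonical choice of CI-generators as $\calf$ and $\calg$. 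Adding these three ideals recovers $J_{\mm_1 \tfps \mm_2} = J_{\mm_1} \tfp J_{\mm_2}$.

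The main obstacle is the precise matching of the algebraic Lift operation with the CI-theoretic lift from~\eqref{eq:liftci}, which is already handled by Lemma~\ref{lem:liftinglemma}; the careful bookkeeping there relies on the $S$-homogeneity assumption (via Proposition~\ref{prop:CI-inference}) that lets us arrange $A \cap S = \emptyset$ so that marginal sums respect the $S$-grading. A secondary caveat is that the generator theorem in Section~\ref{sec:codim0} is phrased for toric ideals, but its proof uses only the binomial structure and $\cala$-homogeneity, so it extends verbatim to $\cala$-homogeneous binomial ideals such as CI-ideals. If a more self-contained argument is preferred, the containment $J_{\mm_1 \tfps \mm_2} \subseteq J_{\mm_1} \tfp J_{\mm_2}$ can be checked by directly computing the map $\phi \colon \kk[z] \to \kk[x,y]$ on each of the three families of CI-generators, while the reverse containment can be obtained via Proposition~\ref{prop:msptfp}, which identifies $\kk[z]/(J_{\mm_1} \tfp J_{\mm_2})$ with the multigraded Segre product of $\kk[x]/J_{\mm_1}$ and $\kk[y]/J_{\mm_2}$.
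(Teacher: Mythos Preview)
Your proposal is correct and follows essentially the same route as the paper: establish $S$-homogeneity via Lemma~\ref{lem:sep-combinatorics}, invoke the codimension zero generator description (Lifts and Quads), identify $\Quad$ with the statement $\mathcal{S}$, and identify $\Lift(\calf),\Lift(\calg)$ with $J_{\mathcal{L}_1},J_{\mathcal{L}_2}$ via Lemma~\ref{lem:liftinglemma}. The only difference is how the toric-vs-general caveat is handled: the paper cites~\cite{Sullivant2007} for the non-toric codimension zero generator theorem rather than asserting that the argument of Section~\ref{sec:codim0} extends verbatim, so you may prefer to point to that reference instead.
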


\begin{proof} Homogeneity in the (codimension zero) $S$-grading follows from
  Lemma~\ref{lem:sep-combinatorics}.  The generators of the codimension zero toric
  fiber product on the right hand side consist of Lifts and Quads
  by~\cite{Sullivant2007} and, in the toric case, Section~\ref{sec:codim0}. Since
  the Quads correspond exactly to the independence statement $\mathcal{S}$, the
  theorem is a consequence of Lemma~\ref{lem:liftinglemma}.
\end{proof}

\begin{ex}
Let $V_{1} = \{1,2,3,4\}$ and $V_{2} = \{3,4,5,6\}$.  Let 
$$
\mm_{1} = \{\ind{1}{3}[\{2,4\}],  \ind{2}{4}[\{1,3\}] \} \mbox{ and } 
\mm_{2} = \{\ind{3}{5}[\{4,6\}],  \ind{4}{6}[\{3,5\}] \}. 
$$
Both $\mm_{1}$ and $\mm_{2}$ are $\{3,4\}$-homogeneous.  The toric fiber product
$\mm_{1} \times_{\{3,4\}} \mm_{2}$ is
$$
\{
\ind{1}{\{3,5,6\}}[\{2,4\}],  \ind{2}{\{4,5,6\}}[\{1,3\}], \ind{\{1,2,3\}}{5}[\{4,6\}],
$$
\[
  \ind{\{1,2,4\}}{6}[\{3,5\}] , \ind{\{1,2\}}{\{5,6\}}[\{3,4\}]  \}.
\]
\end{ex}


\subsection{Graphical conditional independence models}\label{sec:graphical-ci-models}
Our main motivation for toric fiber products together of CI-ideals comes from an
application to the global Markov condition in graphical models.  Let $G$ be a
simple undirected graph on the vertex set~$V$.

\begin{defn}
  The \emph{global Markov ideal} $J_{G}$ is the CI-ideal
  \[ 
  J_{G} = \sum_{\textrm{$C$ separates $A$ and $B$ in $G$}} J_{\ind{A}{B}[C]}.
  \]
\end{defn}

\begin{lemma}
  The global Markov ideal is a binomial ideal.
\end{lemma}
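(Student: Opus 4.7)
The plan is to show $J_{G}$ is binomial by exploiting graph-theoretic separation to enlarge every CI-statement $\ind{A}{B}[C]$ into one whose $A$-, $B$-, $C$-parts partition all of $V$, so that the marginalization in the defining generators becomes trivial.

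First, for each separating triple $(A,B,C)$ with $D \defas V \setminus (A \cup B \cup C)$ nonempty, I would partition $D$ using the connected components of $G \setminus C$. Because $C$ separates $A$ from $B$ in $G$, each component of $G \setminus C$ contains vertices of at most one of $A$ and $B$. I would assign $d \in D$ to $D_{A}$ if its component in $G \setminus C$ meets $A$, and to $D_{B}$ if it meets $B$; components meeting neither may be assigned arbitrarily. Then $C$ still separates $A \cup D_{A}$ from $B \cup D_{B}$ in $G$, so $\ind{A \cup D_{A}}{B \cup D_{B}}[C]$ is among the global Markov statements of $G$ and $J_{\ind{A \cup D_{A}}{B \cup D_{B}}[C]} \subseteq J_{G}$.

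Next, applying the decomposition property of Proposition~\ref{prop:CI-inference} twice yields $J_{\ind{A}{B}[C]} \subseteq J_{\ind{A \cup D_{A}}{B \cup D_{B}}[C]}$. Combining the two containments shows that
\[
  J_{G} \;=\; \sum_{\substack{(A,B,C):\ C \text{ separates } A,B \text{ in } G\\ A \cup B \cup C = V}} J_{\ind{A}{B}[C]}.
\]
For every triple appearing on the right hand side the leftover index set $D$ is empty, so the marginal $p_{i_{A}i_{B}i_{C}+}$ equals $p_{i_{A}i_{B}i_{C}}$ and the defining generators $p_{i_{A}i_{B}i_{C}}p_{j_{A}j_{B}i_{C}} - p_{i_{A}j_{B}i_{C}}p_{j_{A}i_{B}i_{C}}$ are genuine binomials in $\kk[p_{i} : i \in \D_{V}]$. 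Therefore $J_{G}$ is a sum of binomial ideals, hence binomial.

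The only nontrivial step is the partition of $D$ into $D_{A}, D_{B}$. This argument is graph-theoretic and does not generalize to arbitrary CI-models: it uses that a graphical separator actually disconnects the vertices it is said to separate, so that components of $G \setminus C$ give a canonical assignment of the remaining vertices to the two sides.
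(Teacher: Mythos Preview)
Your proof is correct and follows essentially the same approach as the paper: both arguments enlarge a non-saturated statement $\ind{A}{B}[C]$ to a saturated one by observing that any vertex outside $A\cup B\cup C$ can reach at most one of $A,B$ in $G\setminus C$, and then invoke the decomposition property. The only difference is cosmetic: the paper adds one leftover vertex at a time, while you assign all of $D$ at once via the connected components of $G\setminus C$; the underlying graph-theoretic observation and the use of Proposition~\ref{prop:CI-inference} are identical.
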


\begin{proof}
  If a statement is valid on $G$ but does not involve all vertices, then it is the
  consequence of a valid statement that does use all vertices.  Indeed, if $v\in
  V\setminus (A\cup B\cup C)$, then $v$ cannot be connected to both $A$ and $B$ as
  then $C$ would not separate. It is thus connected to at most one of them,
  say~$A$. In this case $\ind{A\cup\set{v}}{B}[C]$ is a valid statement
  for~$G$. Now use the decomposition property, also valid for CI-ideals,
  $\ind{A\cup\set{i}}{B}[C] \Rightarrow \ind{A}{B}[C]$ to get the result.
\end{proof}

Assume that we can decompose the vertex set of $G$ as $V = V_{1} \cup V_{2}$, such
that the induced subgraph on $S \defas V_{1}\cap V_{2}$ is complete, and any path
from $V_{1}$ to $V_{2}$ passes~$S$. In this case $S$ is a \emph{separator}. Since
a global Markov ideal is binomial it is $S$-homogeneous, and the same holds for
the CI-ideals $J_{G_{1}}$ and $J_{G_{2}}$, arising from the induced subgraphs on
$V_{1}$ and~$V_{2}$. 

\begin{thm} \label{thm:graphical-tfp} Let $G$ be a graph with vertex set $V =
  V_{1} \cup V_{2}$ and let $S = V_{1} \cap V_{2}$ be a separator.  Let $G_{1}$ and
  $G_{2}$ be the induced subgraphs of $G$ on vertex sets $V_{1}$ and~$V_{2}$.
  Then $J_{G}$ is the toric fiber product
  \[ J_{G} = J_{G_{1}} \tfp J_{G_{2}} \] where $\cala$ is the matrix associated to
  the $S$-grading.
\end{thm}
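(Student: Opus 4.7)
The plan is to apply Theorem~\ref{sec:citfp-thm} to the global Markov models $\mm_{G_1}$ and $\mm_{G_2}$ of the induced subgraphs. First I would verify that both $\mm_{G_i}$ are $S$-homogeneous: since $S$ is a clique in $G$ and hence in $G_i$, no separator in a valid $G_i$-statement can place vertices of $S$ on both sides, so either $S \cap A = \emptyset$ or $S \cap B = \emptyset$ for each valid $\ind{A}{B}[C]$ in $G_i$. The matrix $\cala$ of the $S$-grading has linearly independent columns (one per element of $\D_S$), so Theorem~\ref{sec:citfp-thm} yields $J_{G_1} \tfp J_{G_2} = J_{\mm_{G_1} \tfps \mm_{G_2}}$.

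The remaining task is the ideal equality $J_G = J_{\mm_{G_1} \tfps \mm_{G_2}} = J_\mathcal{S} + J_{\mathcal{L}_1} + J_{\mathcal{L}_2}$. For the inclusion $\supseteq$, I would check that each statement in $\mathcal{S} \cup \mathcal{L}_1 \cup \mathcal{L}_2$ is a valid graphical separation in $G$. The statement $\mathcal{S} = \ind{V_1 \setminus S}{V_2 \setminus S}[S]$ holds by the separator hypothesis, and a lifted statement $\ind{A}{B \cup (V_2 \setminus S)}[C] \in \mathcal{L}_1$ is valid because any path in $G$ from $A \subseteq V_1 \setminus S$ into $V_2 \setminus S$ must first reach some $s \in S \subseteq B \cup C$ via a sub-path in $V_1$; either $s \in C$ (which blocks the path) or $s \in B$ (which would give an unblocked $G_1$-path from $A$ to $B$, contradicting $G_1$-validity).

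For the inclusion $\subseteq$, I would take a generator of $J_{\ind{A}{B}[C]}$ for a valid $G$-separation (WLOG with $A \cap S = \emptyset$) and split $A = A_1 \sqcup A_2$ with $A_i \subseteq V_i \setminus S$. If $A_i = \emptyset$ for some $i$, the restricted statement $\ind{A}{B \cap V_{3-i}}[C \cap V_{3-i}]$ is valid in $G_{3-i}$, hence in $\mm_{G_{3-i}}$, and its lift in $\mathcal{L}_{3-i}$ absorbs $J_{\ind{A}{B}[C]}$ via weak union (Proposition~\ref{prop:CI-inference}). The delicate case is $A_1, A_2 \neq \emptyset$; here neither restriction yields the statement, so I would use the explicit binomial identity
\[
e \;=\; (M_1 M_2 - M_3 M_4) \;+\; (M_3 M_4 - M_5 M_6),
\]
where $e = M_1 M_2 - M_5 M_6$ is the generator with $M_1 = p_{i_{A_1} i_{A_2} i_B i_C}$, $M_2 = p_{j_{A_1} j_{A_2} j_B i_C}$, $M_5 = p_{i_{A_1} i_{A_2} j_B i_C}$, $M_6 = p_{j_{A_1} j_{A_2} i_B i_C}$, and the cross-term is $M_3 M_4 = p_{i_{A_1} j_{A_2} j_B i_C}\, p_{j_{A_1} i_{A_2} i_B i_C}$. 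The first summand is a generator of $J_{\ind{A_1}{A_2 \cup B}[C]}$ (swap $A_1$, keep $A_2, B$) and the second of $J_{\ind{A_2}{A_1 \cup B}[C]}$ (swap $A_2$, keep $A_1, B$). Both auxiliary statements are valid in $G$, since any $A_1$-to-$A_2$ path must cross $S \subseteq B \cup C$ and is blocked either directly by $C$ or by a $G_1$-path to $B$ that $C$ separates, and each falls into the single-sided case above.

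The main obstacle is precisely this last step: CI-contraction fails at the ideal level, so the reduction to single-sided statements does not follow from the graphoid axioms alone. The telescoping identity above — a splitting of a single $2 \times 2$ minor through an inserted cross-monomial — is the key combinatorial input that bridges the two sides and places $e$ in $J_{\mathcal{L}_1} + J_{\mathcal{L}_2}$.
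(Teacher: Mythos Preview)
Your proposal is correct and follows essentially the same route as the paper's proof: both invoke Theorem~\ref{sec:citfp-thm} after checking $S$-homogeneity, verify the containment $J_G \supseteq J_{G_1}\tfp J_{G_2}$ by observing that the statements in $\mathcal{S}\cup\mathcal{L}_1\cup\mathcal{L}_2$ are valid separations in $G$, and handle the reverse containment by the single-sided case via weak union followed by the two-sided case via the very same telescoping binomial identity splitting $f$ through $J_{\ind{A_1}{B\cup A_2}[C]}$ and $J_{\ind{A_2}{B\cup A_1}[C]}$. Your explicit verification of $S$-homogeneity and the linear independence of $\cala$ are details the paper leaves implicit, but the architecture and the key algebraic step are identical.
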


\begin{proof}
  It is easy to check that all CI-statements defining $J_{G_{1}} \tfp J_{G_{2}}$
  by Theorem~\ref{sec:citfp-thm} are valid on~$G$ and thus $J_{G} \supseteq
  J_{G_{1}} \tfp J_{G_{2}}$.  For the other containment let $\ind{A}{B}[C]$ be a
  an independence statement implied by the global Markov condition on~$G$ such
  that $A\cup B\cup C = V$.  If $A \subseteq V_{1}$, then $\ind{A}{B}[C]$ is
  implied by $\mathcal{L}_{1}$ since
  \[
  \ind{A } {B \setminus( V_{2} \setminus S) }[ C \setminus (V_{2} \setminus S) ]
  \]
  is a global Markov statement on~$G_{1}$.  After potentially replacing it by its
  symmetric version and lifting we find
  \[
  \ind{A} {B \cup( V_{2} \setminus S)}[ C \setminus (V_{2} \setminus S) ] \in
  \mathcal{L}_{1}.
  \]
  By the weak union property in Proposition~\ref{prop:CI-inference},
  $J_{\ind{A}{B}[C]}$ is contained in $J_{\mathcal{L}_{1}}$.  Note that if $B
  \setminus( V_{2} \setminus S) = \emptyset$, then the resulting CI-statement is
  implied by $\mathcal{S}$.  If $B \subset V_{1}$, $A \subset V_{2}$, or $B\subset
  V_{2}$ then $\ind{A}{B}[C]$ is similarly implied by $\mathcal{L}_{1}$ or
  $\mathcal{L}_{2}$.

  It remains to consider the case that both $A$ and $B$ have non-trivial
  intersection with both $V_{1}\setminus S$ and $V_{2} \setminus S$.  Since the
  subgraph induced on $S$ is complete, we can assume that $A \cap S = \emptyset$.
  Let $A_{i} = A \cap V_{i}$, $i=1,2$ and a binomial associated to $\ind{A}{B}[C]$
  has the form
\begin{equation*}
f = 
p_{i_{A_{1}}i_{A_{2}}i_{B} i_{C} } p_{j_{A_{1}}j_{A_{2}}j_{B} i_{C} } -
p_{i_{A_{1}}i_{A_{2}}j_{B} i_{C} } p_{j_{A_{1}}j_{A_{2}}i_{B} i_{C} }.
\end{equation*}
The independence statements
\begin{equation*}
  \ind{A_{1}}{B \cup A_{2}}[C] \mbox{ and  } \ind{A_{2}}{B\cup A_{1}}[C]
\end{equation*}
are both valid in $G$, since any path from $A_{1}$ to $A_{2}$ must traverse~$S$,
and all such paths are blocked either before they get to $B \cap S$, or at $C \cap
S$.  By the argument in the first paragraph of the proof, the first statement
belongs to $J_{\mathcal{L}_{1}}$ and the second statement belongs to
$J_{\mathcal{L}_{2}}$.  Together they imply $f \in J_{\mathcal{L}_{1}} +
J_{\mathcal{L}_{2}}$ since
\begin{eqnarray*}
f & = &  (p_{i_{A_{1}}i_{A_{2}}i_{B} i_{C} } p_{j_{A_{1}}j_{A_{2}}j_{B} i_{C} } -
p_{i_{A_{1}}j_{A_{2}}j_{B} i_{C} } p_{j_{A_{1}}i_{A_{2}}i_{B} i_{C} } )  \\
 &  & + (p_{i_{A_{1}}j_{A_{2}}j_{B} i_{C} } p_{j_{A_{1}}i_{A_{2}}i_{B} i_{C} } -
p_{i_{A_{1}}i_{A_{2}}j_{B} i_{C} } p_{j_{A_{1}}j_{A_{2}}i_{B} i_{C} }).
\end{eqnarray*}
Thus, all binomials from CI-statements implied by $G$ belong to $J_{G_{1}} \tfp
J_{G_{2}}$.
\end{proof}

As an immediate corollary we get the following known
result~\cite{Dobra,TakkenThesis}.
\begin{cor}
  \label{cor:chordalCIisPrime}
  The global Markov ideal of a chordal graph is prime.
\end{cor}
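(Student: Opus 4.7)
My plan is to prove the corollary by induction on $|V(G)|$, leveraging the classical clique-separator structure of non-complete chordal graphs (Dirac's theorem). For the base case let $G$ be a complete graph on $V$: deleting any proper $C \subsetneq V$ from $K_{|V|}$ leaves a complete graph on $V \setminus C$, so no triple $(A,B,C)$ with both $A$ and $B$ nonempty can have $C$ separating $A$ from $B$. Thus only trivial statements contribute to $J_{G}$, so $J_{G} = (0)$ is prime.

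For the inductive step, let $G$ be a non-complete chordal graph. By Dirac's theorem there exist $V_{1}, V_{2} \subsetneq V$ with $V_{1} \cup V_{2} = V$ such that $S \defas V_{1} \cap V_{2}$ induces a clique in $G$ and separates $V_{1} \setminus S$ from $V_{2} \setminus S$. The induced subgraphs $G_{1}, G_{2}$ are chordal (an induced subgraph of a chordal graph is chordal) with strictly fewer vertices, so by induction $J_{G_{1}}$ and $J_{G_{2}}$ are prime. Theorem~\ref{thm:graphical-tfp} then gives
\[
  J_{G} = J_{G_{1}} \tfp J_{G_{2}},
\]
where $\cala = A_{2^{S}, d_{S}}$ encodes the $S$-grading. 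Because $S$ is a clique, $\Gamma_{S} = 2^{S}$ and the columns of $\cala$ are precisely the standard basis vectors $\{e_{i_{S}} : i_{S} \in \D_{S}\}$ of $\zz^{\D_{S}}$, which are linearly independent; hence the toric fiber product is of codimension zero.

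By Proposition~\ref{prop:msptfp}, $\kk[z]/J_{G} \cong (\kk[x]/J_{G_{1}}) \msp (\kk[y]/J_{G_{2}})$, which embeds as a $\kk$-subalgebra of $(\kk[x]/J_{G_{1}}) \otimes_{\kk} (\kk[y]/J_{G_{2}})$. Assuming $\kk$ is algebraically closed (as in Theorem~\ref{thm:normal}), the tensor product of two integral domains over $\kk$ is a domain, and hence so is its subring $\kk[z]/J_{G}$. Thus $J_{G}$ is prime, completing the induction. The main obstacle is organizational: recognizing that the clique-separator decomposition of a chordal graph matches the hypotheses of Theorem~\ref{thm:graphical-tfp} with linearly independent $\cala$, so that primality transfers via the multigraded Segre product inside the tensor product; all other steps are standard.
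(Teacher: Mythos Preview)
Your proof is essentially the paper's: both argue by induction on the chordal graph via a clique separator, invoke Theorem~\ref{thm:graphical-tfp} to write $J_{G}$ as a codimension zero toric fiber product, and then use that such products preserve primality. The only point worth flagging is that you deduce primality by embedding the multigraded Segre product into the tensor product of domains, which forces you to assume $\kk$ algebraically closed; the paper instead carries the stronger invariant ``geometrically prime'' through the induction (this is the content of the second clause of Theorem~\ref{thm:primary-decomp}), and so obtains the result over an arbitrary field. You can recover this by simply strengthening your inductive hypothesis to ``$J_{G}$ is geometrically prime'': the base case $J_{G}=(0)$ is still fine, and in the inductive step you may pass to the algebraic closure before forming the tensor product, after which your embedding argument goes through unchanged.
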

\begin{proof}
  A chordal graph decomposes as a product of its maximal cliques.  Inductively applying Theorem~\ref{thm:graphical-tfp} and the fact that the toric fiber product of geometrically prime ideals is geometrically prime, gives the result.
\end{proof}

The following corollary was one of our initial motivations for this section and
Theorem~\ref{thm:primary-decomp}.
 
\begin{cor}[Primary decompositions of graphical CI-ideals]
  Let $G$ be a graph with vertex set $V = V_{1} \cup V_{2}$ with $V_{1} \cap
  V_{2}$ a separator in~$G$.  Let $G_{1}$ and $G_{2}$ be the induced subgraphs on
  $V_{1}$ and $V_{2}$ respectively.  A primary decomposition of $J_{G}$ can be
  obtained from toric fiber products of the primary components of $J_{G_{1}}$
  and~$J_{G_{2}}$.
\end{cor}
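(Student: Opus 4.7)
The plan is to chain Theorem~\ref{thm:graphical-tfp} with Theorem~\ref{thm:primary-decomp}. By Theorem~\ref{thm:graphical-tfp}, writing $S = V_{1}\cap V_{2}$ and taking $\cala$ to be the matrix of the $S$-grading (which is linearly independent), we have the identification
\begin{equation*}
  J_{G} = J_{G_{1}} \tfp J_{G_{2}}.
\end{equation*}
So it suffices to show that primary decompositions of $J_{G_{1}}$ and $J_{G_{2}}$ assemble into a primary decomposition of the toric fiber product via the $\tfp$-operation applied componentwise, which is exactly the content of Theorem~\ref{thm:primary-decomp}.

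To invoke that theorem, I would first fix primary decompositions $J_{G_{1}} = I_{1} \cap \cdots \cap I_{k}$ and $J_{G_{2}} = J_{1} \cap \cdots \cap J_{l}$ whose components are homogeneous with respect to the $S$-grading. Such decompositions exist: $J_{G_{1}}$ and $J_{G_{2}}$ are binomial and, as in Lemma~\ref{lem:sep-combinatorics}, they are $S$-homogeneous because $S$ is a clique separator; hence the standard multigraded primary decomposition supplies $\cala$-homogeneous primary components. With these in hand, the first assertion of Theorem~\ref{thm:primary-decomp} immediately gives
\begin{equation*}
  J_{G} = \bigcap_{i=1}^{k}\bigcap_{j=1}^{l} I_{i} \tfp J_{j},
\end{equation*}
which is the desired presentation of $J_{G}$ as an intersection of toric fiber products of primary components.

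The one subtlety is turning this intersection into a bona fide primary decomposition, for which Theorem~\ref{thm:primary-decomp} requires the components $I_{i}$ and $J_{j}$ to be geometrically primary. I expect this to be the only real obstacle, and it is mild: one may either pass to a perfect extension of $\kk$ in which the components split into geometrically primary pieces, or observe that in the applications of interest (notably for global Markov ideals with $\kk$ algebraically closed, as in Example~\ref{ex:3squares} and Corollary~\ref{cor:chordalCIisPrime}) this property holds automatically. Once geometric primality is arranged, Theorem~\ref{thm:primary-decomp} yields that each $I_{i} \tfp J_{j}$ is primary, completing the proof. If irredundancy is additionally desired one could then appeal to Corollary~\ref{cor:pers-prim-decomp-1}, but this is not needed for the statement of the corollary.
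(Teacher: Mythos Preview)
Your proposal is correct and matches the paper's intended argument: the corollary is stated without proof because it is meant to follow immediately by combining Theorem~\ref{thm:graphical-tfp} (to realize $J_G$ as the toric fiber product $J_{G_1}\tfp J_{G_2}$) with Theorem~\ref{thm:primary-decomp} (to assemble the primary decomposition componentwise). Your remarks on $\cala$-homogeneity of the components and the geometric primality caveat are appropriate and consistent with the discussion surrounding the corollary in the paper.
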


As the primary decompositions of the CI-ideals $J_{G}$ are unknown for most
graphs, we do not know in which situations we can guarantee that the toric fiber
products of irredundant primary decompositions of CI-ideals yield an irredundant
primary decomposition.  In concrete situations
Corollary~\ref{cor:pers-prim-decomp-1} and Lemma~\ref{lem:two-gradings} can be
used.  For instance the primary decomposition of the chain of squares in
Example~\ref{ex:3squares} is irredundant.  Explicit computation shows that none of
the eight monomial minimal primes contains all monomials of a given multidegree,
and the same holds, of course, for the toric ideal. By
Corollary~\ref{cor:pers-prim-decomp-1} the toric fiber products of the prime
components yield an irredundant prime decomposition of the ideal of two squares
glued along an edge.  When gluing the next square the grading is different, but
Lemma~\ref{lem:two-gradings} guarantees that the hypothesis of
Corollary~\ref{cor:pers-prim-decomp-1} is still fulfilled.  Unfortunately this
argument cannot be applied to all conditional independence models, as the
following example demonstrates.

\begin{ex}
  Consider the binary graphical conditional independence model of the complete
  bipartite graph $K_{3,2}$, labeled such that $\{1,2,3\}$, and $\{4,5\}$ are
  independent sets.  The CI-ideal $J_{K_{3,2}}$ is radical as a computation with
  {\tt Binomials} shows~\cite{BinomialsM2}.  Consider the edge~$1$--$4$.  Its
  induced grading takes values in $\nn^{4}$.  The homogeneous elements
  \begin{gather*}
    p_{11111}p_{11212}p_{21122}p_{21221}-p_{11112}p_{11211}p_{21121}p_{21222}, \\
    p_{11121}p_{11222}p_{21112}p_{21211}-p_{11122}p_{11221}p_{21111}p_{21212}
  \end{gather*}
  witness minimal primes $P_{1}, P_{2}$ with the property that $(P_{1})_{\bfa} =
  \kk[p]_{\bfa}$ for all $\bfa \in \nn\{e_{12},e_{21}\}$ while $(P_{2})_{\bfa} =
  \kk[p]_{\bfa}$ for all $\bfa \in \nn\{e_{11},e_{22}\}$.  The prime decomposition
  of the toric fiber product, given by the toric fiber products of the minimal
  primes of two copies of $J_{K_{2,3}}$, has a component $P_{1}\times_{\cala}
  P_{2}$ which equals the maximal ideal of the fiber product's polynomial ring,
  and is thus redundant.
\end{ex}


\section*{Acknowledgements}
\label{sec:acknowledgement}

Alexander Engstr\"om gratefully acknowledges support from the Miller Institute for
Basic Research in Science at UC Berkeley.  Thomas Kahle was supported by an EPDI
Fellowship.  Seth Sullivant was partially supported by the David and Lucille
Packard Foundation and the US National Science Foundation (DMS 0954865).

The authors are happy to thank the Mittag--Leffler institute for hosting them for
the final part of this project, during the program on ``Algebraic Geometry with a
View towards Applications''.  Johannes Rauh made valuable comments on an earlier
version of the manuscript.


\end{document}